\newtheorem{thm}{Theorem}[section]
\newtheorem{lem}[thm]{Lemma}
\newdefinition{rmk}{Remark}[section]
\newdefinition{definition}{Definition}
\newtheorem{example}{Example}[section]
\newproof{pf}{Proof}
\numberwithin{equation}{section}
\numberwithin{figure}{section}
\numberwithin{table}{section}
\renewcommand{\vec}[1]{\mbox{\boldmath \small $#1$}}
\newcommand\bbN{\mathbb{N}}
\newcommand\dd{\mathrm{d}}
\newcommand\abs[1]{\lvert #1 \rvert}
\newcommand\norm[1]{\left\lvert\left\lvert #1 \right\rvert\right\rvert}
\newcommand\pd[2]{\dfrac{\partial {#1}}{\partial {#2}}}
\newcommand\bF{\bm{F}}
\newcommand\bG{\bm{G}}
\newcommand\bw{\bm{w}}
\newcommand\bB{\bm{B}}
\newcommand\vx{v_1}
\newcommand\vy{v_2}
\newcommand\Bx{B_1}
\newcommand\By{B_2}
\newcommand\psix{{{\psi}_1}}
\newcommand\bv{\bm{v}}
\newcommand\bV{\bm{V}}
\newcommand\bU{\bm{U}}
\newcommand\BB{\abs{\bm{B}}^2}
\newcommand\divhB{\nabla\cdot(h\bm{B})}
\newcommand\vB{\bv\cdot\bm{B}}
\newcommand\tbF{\widetilde{\bF}}
\newcommand\xl{{i-\frac12}}
\newcommand\xr{{i+\frac12}}
\newcommand\yl{{j-\frac12}}
\newcommand\yr{{j+\frac12}}
\newcommand\jump[1]{\llbracket #1 \rrbracket}
\newcommand\mean[1]{\{\!\!\{ #1 \}\!\!\}}
\newcommand\jumpangle[1]{\langle\!\langle #1 \rangle\!\rangle}
\begin{document}

\begin{frontmatter}

  \title{High-order accurate entropy stable finite difference schemes for the
    shallow water magnetohydrodynamics}

  \author{Junming Duan}
  \ead{duanjm@pku.edu.cn}
  \author{Huazhong Tang\corref{cor1}}
  \ead{hztang@math.pku.edu.cn}
  \address{Center for Applied Physics and Technology, HEDPS and LMAM, School of Mathematical Sciences, Peking University,
    Beijing 100871, P.R. China}
  \cortext[cor1]{Corresponding author. Fax:~+86-10-62751801.}

  \begin{abstract}
  	This paper develops the high-order accurate entropy stable (ES) finite difference schemes for the
  	shallow water magnetohydrodynamic (SWMHD) equations.
    They are built on the numerical approximation of the modified SWMHD equations
    with  the Janhunen source term. 
    First, the second-order accurate well-balanced semi-discrete entropy conservative (EC) schemes are constructed,
    satisfying the entropy identity for the given convex entropy function
  	and preserving the steady states of the lake at rest (with zero magnetic field).
  	The key is to match both discretizations for the fluxes and the non-flat river bed bottom and Janhunen source terms,
  	and to find the affordable EC fluxes of the second-order EC schemes.
Next, by using the second-order EC schemes as building block,
high-order accurate well-balanced  semi-discrete EC schemes are proposed.
Then, the high-order accurate well-balanced semi-discrete ES schemes 
are derived by adding a suitable dissipation term to the EC scheme with the 
WENO reconstruction of the scaled entropy variables in order to suppress the numerical
  oscillations of the EC schemes.
After that,  	the semi-discrete schemes are integrated in time by using the high-order
  	strong stability preserving explicit  Runge-Kutta schemes to obtain the fully-discrete high-order well-balanced  schemes.
  The ES property of the Lax-Friedrichs flux is also proved and then the positivity-preserving ES schemes are  studied by using the positivity-preserving flux limiter.
Finally,  	extensive numerical tests are conducted to validate the accuracy, the well-balanced,  ES and positivity-preserving properties, and the
  	ability to capture discontinuities of our schemes.
  \end{abstract}

  \begin{keyword}
    Entropy conservative scheme\sep entropy stable scheme\sep  high-order accuracy \sep positivity preserving
    \sep finite difference scheme\sep shallow water magnetohydrodynamics
  \end{keyword}

\end{frontmatter}

\section{Introduction}
The  shallow water equations are widely used in atmospheric flows, tides, storm surges, river and coastal
flows, lake flows, tsunamis, etc. They  describe the flow with free surface under the influence of gravity
and the bottom topology, where the vertical dimension is much smaller than any typical horizontal scale.
Numerical simulation is an effective tool to solve them and a great variety of numerical methods
are available in the literature, e.g. \cite{Bermudez1994,Kuang2017SW,Noelle2007,Tang2004SWa,Tang2004SWb,Wu2016SW,Xing2017book,Xing2005} and the references therein.

Here we are concerned with numerical methods for the shallow water magnetohydrodynamic (SWMHD) equations,
which take into account the effect of the magnetic field,
originally proposed in \cite{Gilman2000} for studying the global dynamics
of the solar tachocline.
The two-dimensional (2D) SWMHD equations with non-flat bottom topography \cite{Gilman2000,Rossmanith2003} read the following  quasi-linear hyperbolic balance laws
\begin{equation}\label{eq:SWMHD}
  \pd{\bU}{t}+\sum\limits_{\ell=1}^2\pd{\bF_\ell(\bU)}{x_\ell}=-\bG(\bU),
\end{equation}
with the divergence-free condition
\begin{equation}
  \divhB=0, \label{eq:divhB}
\end{equation}
where $\bU$ is the conservative variables vector, and $\bF_1$ and $\bF_2$ are respectively
the flux vectors along the $x_1$- and $x_2$ directions and defined by
\begin{align}\label{eq:SWMHD_comp}\begin{aligned}
  \bU&=\left(h, ~h\bv^\mathrm{T}, ~h\bB^\mathrm{T}\right)^\mathrm{T},\\
  \bF_\ell&=\left(hv_\ell, ~hv_\ell\bv^\mathrm{T}-hB_\ell\bB^\mathrm{T}+\frac12gh^2\bm{e}_\ell^\mathrm{T}, ~h(v_\ell\bB-B_\ell\bv)^\mathrm{T}\right)^\mathrm{T},\ \ell=1,2,\\
  \bG&=\left(0, gh\nabla{b}, ~\bm{0}_2^\mathrm{T}\right)^\mathrm{T},\
  \bm{0}_2^\mathrm{T}=(0,0),
\end{aligned}\end{align}
with the height of conducting fluid $h$, the fluid velocity vector $\bv=(\vx,\vy)^\mathrm{T}$,
the magnetic field vector $\bm{B}=(\Bx,\By)^\mathrm{T}$, the gravitational acceleration constant $g$,
the bottom topography $b=b(x,y)$,   and   $\bm{e}_\ell$ denotes the $\ell$th
column of the $2\times 2$ unit matrix.

Existing numerical studies for the SWMHD equations include the evolution Galerkin scheme \cite{Kroger2005},
space-time conservation element solution element (CESE) method \cite{Qamar2006}, central-upwind schemes \cite{Zia2014},
Roe-type schemes \cite{Kemm2016}, second-order entropy stable (ES) finite volume scheme (satisfying the
{semi-}discrete entropy inequality) \cite{Winters2016}, high-order CESE scheme up to 4th-order \cite{Ahmed2019}, etc.
Some have dealt with the non-flat topography \cite{Winters2016,Zia2014}, and are well-balanced in the sense that
the schemes can preserve the lake at rest.
For the numerical solutions of the SWMHD equations, we need to deal carefully with the divergence-free constraint \eqref{eq:divhB}.
In the ideal magnetohydrodynamic (MHD) case, many works have focused on this issue, for example, the projection method \cite{Brackbill1980},
the constrained transport  method and its variants \cite{Balsara2009MHDa,Evans1988,Londrillo2004,Rossmanith2006},
the eight-wave formulation of the MHD equations \cite{Powell1994}, the hyperbolic divergence cleaning method \cite{Dedner2002},
the locally divergence-free DG method \cite{LiFY2005}, the ``exactly'' divergence-free central DG method \cite{LiFY2011}, and so on.

For the quasi-linear hyperbolic conservation laws or balance laws,
it may be the case that no classical solution exists so that
the weak solution  should be  defined  in the sense of distributions.
Unfortunately, such weak solutions might be non-unique.
The entropy condition plays an essential role in
choosing the physically relevant solution from the collection of all possible week solutions.

\begin{definition}[Entropy function]
  A strictly convex function $\eta(\bU)$ is called an {\em entropy function} for the
  system \eqref{eq:SWMHD}-\eqref{eq:SWMHD_comp} if there are associated {\em entropy
  fluxes} $q_1(\bU)$ and $q_2(\bU)$ such that
  \begin{equation}\label{eq:entropy}
    q_\ell'(\bU)=\bV^\mathrm{T}\bF_\ell'(\bU),\quad \ell=1,2,
  \end{equation}
  where $\bV=\eta'(\bU)^\mathrm{T}$ is called the {\em entropy variables}, and
  $(\eta,q_\ell)$ is an {\em entropy pair}.
\end{definition}
For the smooth solutions of \eqref{eq:SWMHD}-\eqref{eq:SWMHD_comp}
with  the entropy pair $(\eta,q_\ell)$,
multiplying \eqref{eq:SWMHD} by $\bV^\mathrm{T}$ gives the entropy identity
\begin{equation}\label{eq:SmoothEntId}
  \pd{\eta(\bU)}{t}+\sum\limits_{\ell=1}^2\pd{q_\ell(\bU)}{x_\ell} = -\bV^\mathrm{T}\bG(\bU).
\end{equation}
However, if the solutions contain discontinuities, then the above identity
does not hold.
\begin{definition}[Entropy solution]
  A weak solution $\bU$ of \eqref{eq:SWMHD} is called an {\em entropy solution} or a {\em physically relevant solution} if for
  all entropy pairs $(\eta,q_\ell)$, the entropy inequality or condition
  \begin{equation}\label{eq:entropyineq}
    \pd{\eta(\bU)}{t}+\sum\limits_{\ell=1}^2\pd{q_\ell(\bU)}{x_\ell} \leqslant -\bV^\mathrm{T}\bG(\bU),
  \end{equation}
  holds in the sense of distributions.
\end{definition}

The entropy conditions are of great importance in the well-posedness of hyperbolic conservation laws
or balance laws, e.g. \eqref{eq:SWMHD}, and may improve the robustness of the numerical schemes,
thus it is meaningful to seek their numerical schemes
 satisfying  the discrete entropy inequality. 
For the scalar conservation laws, the conservative monotone schemes
are nonlinearly stable and satisfy the discrete entropy conditions
so that their solutions can converge to the entropy solution \cite{Crandall1980,Harten1983}; the E-schemes satisfy the semi-discrete entropy condition for any convex entropy \cite{Osher1984,Osher1988}. However, those schemes are only first-order accurate.
Generally, it is very hard to prove that the high-order accurate schemes of
the scalar conservation laws and the schemes for the system of hyperbolic conservation laws
satisfy the entropy inequality for any convex entropy function.
Two relative works are presented in \cite{Bouchut1996} and \cite{Hughes1986}.
The former is second-order accurate and not in the standard finite volume
form, while the latter approximates the entropy variables and needs solving
nonlinear equations at each time step.
In view of this, the researchers usually try to study the high-order accurate  entropy conservative (EC) (resp.
entropy stable (ES)) schemes, which satisfy the discrete entropy identity (resp. inequality) for a given entropy pair.
The second-order EC schemes  were built in \cite{Tadmor1987,Tadmor2003}, and their higher-order
extension was studied in \cite{Lefloch2002}.
It is known that EC schemes may become oscillatory near the
shock waves, thus additional dissipation terms have to be added into the EC schemes to obtain the ES schemes.
Combining the EC flux of the EC schemes
with the ``sign'' property of the ENO reconstructions, the`
arbitrary high-order ES schemes were constructed by using high-order
dissipation terms \cite{Fjordholm2012}.
Some ES schemes based on the discontinuous Galerkin (DG) framework were also
studied, e.g. the ES space-time DG
formulation \cite{Barth1999,Hiltebrand2014}  and the ES nodal DG schemes using suitable quadrature rules
\cite{Chen2017}.
The ES schemes based on summation-by-parts (SBP) operators were
developed for the Navier-Stokes equations \cite{Fisher2013}.
As a base of those works, the construction of the affordable two-point EC flux is  one of the key parts, and has been
extended to the shallow water equations \cite{Fjordholm2011,Gassner2016}, the MHD equations \cite{Chandrashekar2016,Winters-jcp2016},
the relativistic (magneto-)hydrodynamic equations \cite{Duan2020RHD,Duan2020RMHD,Wu2020RMHDES}, and so on.
Those ES MHD schemes are built on the modified MHD equations with the non-conservative source terms,
e.g. the Powell source terms \cite{Chandrashekar2016,Derigs2018,Powell1994} or the Janhunen source terms
\cite{Janhunen2000,Winters-jcp2016}. The Powell source term can be used to obtain the symmetrizable MHD equations,
but the solution to the Riemann problem of Powell's MHD equations
 with initial positive gas pressures
may contain a nonphysical negative gas pressure \cite{Janhunen2000},
while the Janhunen source term can preserve the conservation of the momentum and energy and  be used
to restore the positivity of gas pressure.
Due to  the adding source terms, the sufficient condition proposed in \cite{Tadmor1987} for a finite difference scheme  satisfying an entropy identity
should be modified, see \cite{Chandrashekar2016}.
One should also take care of the discretization of the source
terms, to match it to the discretization of the flux gradients and ensure that the final schemes satisfy the semi-discrete entropy inequality.

The paper aims at constructing the high-order accurate  ES
finite difference schemes for the  SWMHD equations \eqref{eq:SWMHD} for a given entropy pair.
With suitable discretization of the non-flat bottom topography and  Janhunen source terms
in the modified  SWMHD equations, a two-point EC flux is derived for constructing the semi-discrete second-order
accurate well-balanced EC schemes satisfying the entropy identity.
Our discretization of the source terms is essential to achieve both high-order accuracy and well-balance,
and does not meet the computational issue in \cite{Winters2016} when
the magnetic field is zero. 
The high-order well-balanced EC schemes are constructed by using the above two-point EC schemes as building blocks.
In order to avoid the numerical oscillation produced
by the EC schemes around the discontinuities,  some suitable dissipation terms utilizing the weighted essentially
non-oscillatory (WENO) reconstruction in the scaled entropy variables are added to the EC fluxes to get the high-order accurate well-balanced ES schemes satisfying the semi-discrete entropy inequality.
The above semi-discrete EC and ES schemes are integrated in time by using the high-order accurate explicit strong-stability preserving Runge-Kutta schemes to obtain the fully-discrete high-order accurate well-balanced 
schemes.
 The ES property of the Lax-Friedrichs flux is also proved and then the positivity{-}preserving ES schemes are developed by using the positivity-preserving flux limiter.

The rest of the paper is organized as follows.
Section \ref{section:Symm} presents the modified SWMHD equations, the entropy pair and
necessary notations.
Section \ref{section:OneD} constructs the affordable two-point EC flux and the semi-discrete EC and ES schemes for the 1D SWMHD equations,
and proves the   well-balanced properties of the EC and ES schemes.
The positivity-preserving ES scheme is also studied.
Section \ref{section:MultiD} gives the 2D well-balanced EC and ES schemes.
Extensive numerical tests are conducted in Section \ref{section:Num} to validate the effectiveness and performance of our schemes.
Section \ref{section:Conclusion} gives some conclusions.

\section{Modified SWMHD equations}\label{section:Symm}
This section gives an entropy analysis of the   SWMHD equations with the
non-flat bottom topography.

If the solutions are smooth, then the SWMHD equations \eqref{eq:SWMHD}-\eqref{eq:SWMHD_comp} can be equivalently cast into
the primitive variable form
\begin{equation}\label{eq:SWMHD_prim}
\begin{aligned}
&\pd{h}{t}+\nabla\cdot(h\bv)=0,\\
&\pd{\bv}{t}+(\bv\cdot\nabla)\bv-(\bB\cdot\nabla)\bB+g\nabla h^\mathrm{T}=-gh\nabla{b}^\mathrm{T}+\divhB\bB/h,\\
&\pd{\bB}{t}+(\bv\cdot\nabla)\bB-(\bB\cdot\nabla)\bv=\divhB\bv/h,\\
&\divhB=0.\\
\end{aligned}
\end{equation}
Defining the mathematical entropy as the total energy \cite{Fjordholm2011,Winters2016}
\begin{equation}\label{eq:entropy001}
\eta(\bU,{b}):=\dfrac12h(\abs{\bv}^2+\abs{\bB}^2)+\dfrac12gh^2+ghb,
\end{equation}
and using \eqref{eq:SWMHD_prim} gives
\begin{equation*}
  \partial_t{\eta}+\sum\limits_{\ell=1}^2\partial_{x_\ell}
  {\left[\left(\dfrac12(\abs{\bv}^2+\abs{\bB}^2)+gh+gb\right)hv_\ell
  -hB_\ell(\vB)\right]}-(\vB)\divhB=0,
\end{equation*}
which means that
under the constraint $\divhB=0$, the following quantities
\begin{equation}\label{eq:entropypair}
  \eta(\bU,{b}),\quad q_\ell(\bU,{b}):=\left(\dfrac12(\abs{\bv}^2+\abs{\bB}^2)+gh
  +gb\right)hv_\ell-hB_\ell(\vB),
\end{equation}
satisfy an additional conservation law.
However, unfortunately, the pair $(\eta, q_\ell)$ defined in \eqref{eq:entropy001}-\eqref{eq:entropypair}
does not satisfy \eqref{eq:entropy}, since
\begin{equation*}
\partial q_\ell(\bU,b)/\partial \bU =\bV^\mathrm{T}\bF_\ell'(\bU)+(\vB)(hB_\ell)'(\bU),
\end{equation*}
where the vector 
$\bV=(\partial{\eta}/\partial\bU)^\mathrm{T}$ is explicitly given by
\begin{equation*}
\bV=\left(g(h+b)-\frac12\left(\abs{\bv}^2+\BB\right), \bv^\mathrm{T}, \bB^\mathrm{T}  \right)^\mathrm{T}.
\end{equation*}
 It is not difficult to verify that the matrix ${\partial\bU}/{\partial\bV}$ is symmetric and positive definite.

 Similar to the Janhunen source term for the ideal MHD equations
\cite{Janhunen2000},
one needs to add some non-conservative source terms
to get a modified   SWMHD system  for \eqref{eq:SWMHD}-\eqref{eq:SWMHD_comp}
as follows
\begin{equation}\label{eq:symm}
\pd{\bU}{t}+\sum\limits_{\ell=1}^2\pd{\bF_\ell}{x_\ell}+
{\Psi }^\mathrm{T}\divhB=-\bG(\bU),
\end{equation}
where
\begin{equation*}
\Psi=(0,0,0,\bv^\mathrm{T}),
\end{equation*}
and satisfies $\Psi\bV=\Phi(\bV):=\vB$.
Note that the modified SWMHD equations without bottom topography have been discussed in the literature, see e.g. \cite{Winters2016}.

Taking the dot product of  $\bV$ with \eqref{eq:symm} yields
\begin{align*}
&\bV^\mathrm{T}\pd{\bU}{t} +\sum_{\ell=1}^2\left(\bV^\mathrm{T}\pd{\bF_\ell}{\bU}+\Phi(\bV)
\pd{(hB_\ell)}{\bU}\right)\pd{\bU}{x_\ell}+\bV^\mathrm{T}\bG(\bU)\\
=&\pd{\eta}{t}+\sum_{\ell=1}^2\left(\pd{q_\ell}{\bU} +\left(\Phi(\bV) -(\vB)\right)\pd{(hB_\ell)}{\bU}\right)\pd{\bU}{x_\ell}+\bV^\mathrm{T}\bG(\bU)=0,
\end{align*}
i.e.
\begin{equation}\label{eq:entropyID}
\pd{\eta}{t}+\sum\limits_{\ell=1}^2\pd{q_\ell}{x_\ell}=0,
\end{equation}
where we have used the identity
\begin{equation*}
\sum_{\ell=1}^2\pd{q_\ell{(\bU,b)}}{\bU}\pd{\bU}{x_\ell}
+\bV^\mathrm{T}\bG(\bU)=\sum_{\ell=1}^2\pd{q_\ell}{x_\ell}.
\end{equation*}
Notice that the identity \eqref{eq:entropyID} is obtained without using
the divergence-free condition, and will be useful in constructing an entropy stable (ES)
scheme  because the numerical divergence of $\divhB$ may not be zero.
Moreover, we  define the ``entropy potential'' $\psi_\ell$ from the given $(\eta(\bU,{b}), q_\ell(\bU,{b}))$ by
\begin{equation*}
\psi_\ell:= \bV^\mathrm{T} \bF_\ell(\bU)+\Phi(\bV) (hB_\ell)-q_\ell(\bU)=\frac12gh^2v_\ell,
\end{equation*}
which makes the following identity true
\begin{align*}
\int_{\Omega} \left( \pd{\eta}{t}+ \pd{q_\ell}{x_\ell} \right)\dd\bm{x}
=&\int_{\Omega} \bV^\mathrm{T}\left( \pd{\bU}{t}+ \pd{\bF_\ell(\bU)}{x_\ell}+{ \Psi}^\mathrm{T}\divhB+\bG(\bU)\right)\dd\bm{x}\\
=&\int_{\Omega} \left( \pd{\eta(\bU)}{t}+ \pd{(\bV^\mathrm{T} \bF_\ell (\bU))}{x_\ell } - \pd{\bV^\mathrm{T}}{x_\ell } \bF(\bU)+\nabla\cdot(\Phi h\bB)-\nabla\Phi\cdot(h\bB) \right)\dd\bm{x}\\
=&\int_{\Omega} \left( \pd{\eta(\bU)}{t}+ \pd{(\bV^\mathrm{T} \bF_\ell (\bU))}{x_\ell } +\nabla\cdot(\Phi h\bB) - \pd{\psi_\ell (\bU)}{x_\ell }\right)\dd\bm{x}.
\end{align*}
The ``entropy potential'' plays an important role in obtaining the sufficient condition for the two-point entropy conservative (EC) fluxes.

\begin{rmk}
	Notice that the entropy identity \eqref{eq:entropyID} is slightly different from \eqref{eq:SmoothEntId},
	since the source terms in the SWMHD equations have special structure, and then $\bV^\mathrm{T}\bG(\bU)$ in \eqref{eq:SmoothEntId}
	can be absorbed into the left-hand side by using
	\begin{equation*}
	  \pd{(ghb)}{t}+\sum_{\ell=1}^2\pd{(ghbv_\ell)}{x_\ell}= \bV^\mathrm{T}\bG(\bU).
	\end{equation*}
	In other words, the difference between the entropy pair $(\eta,q_\ell)$ in \eqref{eq:entropyID} and that in \eqref{eq:SmoothEntId}
	is $(ghb,ghbv_\ell)$.
\end{rmk}


\section{One-dimensional schemes}\label{section:OneD}
This section   constructs the high-order accurate well-balanced
EC and ES schemes for the $x$-split system of \eqref{eq:symm}, i.e.,
\begin{equation}\label{eq:symm1D}
  \pd{\bU}{t}+\pd{\bF_1(\bU)}{x}=-\Psi^\mathrm{T}\pd{(h\Bx)}{x}
  -\bm{G}_1^\mathrm{T}\pd{b}{x},
\end{equation}
where $\bm{G}_1=(0,~gh,~0,~0,~0)^\mathrm{T}$.

\subsection{Second-order EC schemes}
Let us consider a uniform mesh in $x$: $x_1<x_2<\cdots<x_{N_x}$, with the spatial step size
$\Delta x=x_i-x_{i-1},~i=2,\cdots,N_x$ and the semi-discrete conservative finite
difference scheme for \eqref{eq:symm1D} as follows
\begin{equation}\label{eq:1Dsemi}
\dfrac{\dd}{\dd t}\bU_i=-\dfrac{1}{\Delta x}\left(\widehat{\bF}_{\xr}-\widehat{\bF}_{\xl}\right)
-
\Psi_i^\mathrm{T}\dfrac{\mean{h\Bx}_\xr-\mean{h\Bx}_\xl}{\Delta x}
-(\bm{G}_1)_i^\mathrm{T}\dfrac{\mean{b}_\xr-\mean{b}_\xl}{\Delta x},
\end{equation}
where $\mean{a}_\xr$ denotes the mean value of $a$ at $x_\xr$, i.e.,
$\mean{a}_\xr=(a_i+a_{i+1})/2$, $\bU_i(t)$ and $\bV_i(t)$ approximate the point values of $\bU(x_i,t),\bV(x_i,t)$, respectively,
$\widehat{\bF}_{\xr}(t)$ is the numerical flux approximating $\bF_1(x,t)$ at $x_{\xr}=x_i+{\Delta x}/{2}$,
and the second-order central difference is used to approximate
$\partial{(h\Bx)}/\partial{x}$ and $\partial{b}/\partial{x}$ in the source terms.

\begin{definition}[Entropy conservative scheme]
  The scheme \eqref{eq:1Dsemi} is {\em entropy conservative} (EC)
     and corresponding numerical flux $\widehat{\bF}_{\xr}(t)$ is called the {\em EC flux}, if the solution of \eqref{eq:1Dsemi}  satisfies a semi-discrete entropy identity
  \begin{equation}\label{eq:discEnEq}
    \dfrac{\dd}{\dd t}\eta(\bU_i(t))+\dfrac{1}{\Delta
    x}\left(\widetilde{q}_{\xr}(t)-\widetilde{q}_{\xl}(t)\right)=0,
  \end{equation}
  for some numerical entropy flux $\widetilde{q}_{\xr}$ consistent with the given physical entropy flux $q_1$.
\end{definition}

The following lemma gives a sufficient condition for the semi-discrete scheme \eqref{eq:1Dsemi} to be  EC,
with the discretization of  the source terms.

\begin{lem}\label{lem:ECFlux}
	If a symmetric consistent two-point flux $\tbF_{\xr}:=\tbF_1(\bU_i,\bU_{i+1})$ satisfying
	\begin{equation}\label{eq:conserFlux}
	\jump{\bV}^\mathrm{T}\cdot \tbF_1=\jump{\psix}-\jump{\Phi}\mean{h\Bx}+g\jump{hb\vx}
-g\jump{h\vx}\mean{b},
	\end{equation}
	is used in  \eqref{eq:1Dsemi},
		where $\jump{a}$ and $\mean{a}$ denote the jump and mean of $a$, respectively,
	then the semi-discrete scheme \eqref{eq:1Dsemi} is second-order accurate and EC,
	with the  numerical entropy flux
	\begin{equation}\label{eq:numEntropyFlux}
	   \widetilde{q}_\xr=\mean{\bV}_\xr^\mathrm{T}\tbF_{\xr}+\mean{\Phi}_\xr\mean{h\Bx}_\xr-\mean{\psi_1}_\xr
	   +g\mean{h\vx}_\xr\mean{b}_\xr-g\mean{hb\vx}_\xr.
	\end{equation}
\end{lem}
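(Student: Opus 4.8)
The plan is to prove the two assertions—second-order accuracy and entropy conservation—in turn, with essentially all of the work in the second. For the accuracy claim I would invoke the standard Tadmor argument: a symmetric, consistent two-point flux $\tbF_1(\bU_i,\bU_{i+1})$ furnishes a second-order approximation of $\partial_x\bF_1$ at the interface $x_\xr$ for smooth data, while the central differences $(\mean{h\Bx}_\xr-\mean{h\Bx}_\xl)/\Delta x$ and $(\mean{b}_\xr-\mean{b}_\xl)/\Delta x$ are manifestly second-order approximations of $\partial_x(h\Bx)$ and $\partial_x b$. Combining these gives the claimed order of \eqref{eq:1Dsemi}.

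For the entropy identity I would start by contracting \eqref{eq:1Dsemi} with $\bV_i^\mathrm{T}$. The chain rule (using $\bV_i=\eta'(\bU_i)^\mathrm{T}$) turns the left-hand side into $\frac{\dd}{\dd t}\eta(\bU_i)$, and the two source contractions collapse via the structural identities $\bV_i^\mathrm{T}\Psi_i^\mathrm{T}=\Phi(\bV_i)=:\Phi_i$ and $\bV_i^\mathrm{T}(\bm{G}_1)_i=g(hv_1)_i$, the latter reading off the explicit form of $\bV$. It then remains to verify the discrete telescoping identity
$$\bV_i^\mathrm{T}(\tbF_\xr-\tbF_\xl)+\Phi_i(\mean{h\Bx}_\xr-\mean{h\Bx}_\xl)+g(hv_1)_i(\mean{b}_\xr-\mean{b}_\xl)=\widetilde{q}_\xr-\widetilde{q}_\xl,$$
since this immediately yields \eqref{eq:discEnEq}.

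The engine of the telescoping is the pair of elementary decompositions $\bV_i=\mean{\bV}_\xr-\frac12\jump{\bV}_\xr=\mean{\bV}_\xl+\frac12\jump{\bV}_\xl$, together with the analogous splittings of $\Phi_i$ and $(hv_1)_i$ at the two interfaces flanking node $i$. Inserting these into the flux term isolates a clean telescoping part $\mean{\bV}_\xr^\mathrm{T}\tbF_\xr-\mean{\bV}_\xl^\mathrm{T}\tbF_\xl$ from the jump residual $-\frac12\jump{\bV}_\xr^\mathrm{T}\tbF_\xr-\frac12\jump{\bV}_\xl^\mathrm{T}\tbF_\xl$, into which I would substitute the EC condition \eqref{eq:conserFlux}. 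This produces four families of jump terms; I would then decompose the two discretized source terms the same way. The crucial simplifications are: (i) the $\jump{\Phi}\mean{h\Bx}$ contributions from the flux and from the Janhunen source cancel identically at each interface; (ii) the $g\jump{hv_1}\mean{b}$ contributions from the flux and from the topography source cancel identically; and (iii) the surviving $-\frac12(\jump{\psi_1}_\xr+\jump{\psi_1}_\xl)$ and $-\frac{g}{2}(\jump{hbv_1}_\xr+\jump{hbv_1}_\xl)$ collapse to $-(\mean{\psi_1}_\xr-\mean{\psi_1}_\xl)$ and $-g(\mean{hbv_1}_\xr-\mean{hbv_1}_\xl)$ by the identity $\frac12(\jump{a}_\xr+\jump{a}_\xl)=\mean{a}_\xr-\mean{a}_\xl$. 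Assembling the surviving telescoping pieces reproduces \eqref{eq:numEntropyFlux} exactly.

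I expect the main obstacle to be precisely the bookkeeping behind cancellations (i)–(iii): one must check that the coefficient and sign of every jump term generated by \eqref{eq:conserFlux} is matched by a term coming from the discretized source, so that only telescoping means remain. This is exactly where the arithmetic-mean discretization $\mean{h\Bx}$, $\mean{b}$ of the source terms in \eqref{eq:1Dsemi} is indispensable—an ill-matched choice would leave a nonzero jump residual and destroy entropy conservation. As a closing sanity check I would confirm that $\widetilde{q}_\xr$ is consistent with $q_1$ by setting $\bU_i=\bU_{i+1}$, whereupon the jumps vanish, the $g\,hv_1b$ and $g\,hbv_1$ terms cancel, and the relation $\psi_1=\bV^\mathrm{T}\bF_1+\Phi(h\Bx)-q_1$ reduces $\widetilde{q}_\xr$ to $q_1$.
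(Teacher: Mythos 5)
Your proposal is correct and follows essentially the same route as the paper's proof: contracting the scheme with $\bV_i^\mathrm{T}$, splitting the nodal values into interface means and jumps via $a_i=\mean{a}_\xr-\frac12\jump{a}_\xr=\mean{a}_\xl+\frac12\jump{a}_\xl$, substituting the condition \eqref{eq:conserFlux} into the jump residuals, exploiting exactly the cancellations (i)--(iii) you describe, telescoping with $\frac12(\jump{a}_\xr+\jump{a}_\xl)=\mean{a}_\xr-\mean{a}_\xl$, and invoking Tadmor's result together with the second-order central differencing of the sources for the accuracy claim. The only (welcome) addition beyond the paper is your explicit consistency check of $\widetilde{q}_\xr$ with $q_1$ via the potential relation $\psi_1=\bV^\mathrm{T}\bF_1+\Phi(h\Bx)-q_1$, which the paper leaves implicit.
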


\begin{proof}
	Left multiplying \eqref{eq:1Dsemi} by $\bV_i^\mathrm{T}$ and using $\Phi(\bV)=\Psi\bV$ gives
	\begin{align*}
	\dfrac{\dd\eta_i}{\dd t}=-\dfrac{1}{\Delta x}\left[\bV_i^\mathrm{T}\left(\tbF_{\xr}-\tbF_{\xl}\right)
	+\Phi(\bV_i)\left(\mean{h\Bx}_\xr-\mean{h\Bx}_\xl\right)
	+gh_i(\vx)_i\left(\mean{b}_\xr-\mean{b}_\xl\right)\right].
	\end{align*}
The right-hand side term can be further rearranged  as follows
	\begin{align*}
	&\bV_i^\mathrm{T}\left(\tbF_{\xr}-\tbF_{\xl}\right)
	+\Phi(\bV_i)\left(\mean{h\Bx}_\xr-\mean{h\Bx}_\xl\right)
	+gh_i(\vx)_i\left(\mean{b}_\xr-\mean{b}_\xl\right)\\
	=&\left(\mean{\bV}_\xr-\frac12\jump{\bV}_\xr\right)^\mathrm{T}\tbF_{\xr}
	-\left(\mean{\bV}_\xl+\frac12\jump{\bV}_\xl\right)^\mathrm{T}\tbF_{\xl} \\
	&+\left(\mean{\Phi}_\xr-\frac12\jump{\Phi}_\xr\right)\mean{h\Bx}_\xr
	-\left(\mean{\Phi}_\xl+\frac12\jump{\Phi}_\xl\right)\mean{h\Bx}_\xl \\
	&+g\left(\mean{h\vx}_\xr-\frac12\jump{h\vx}_\xr\right)\mean{b}_\xr
	-g\left(\mean{h\vx}_\xl+\frac12\jump{h\vx}_\xl\right)\mean{b}_\xl \\
	=&\mean{\bV}_\xr^\mathrm{T}\tbF_{\xr}+\mean{\Phi}_\xr\mean{h\Bx}_\xr+g\mean{h\vx}_\xr\mean{b}_\xr \\
	&-\mean{\bV}_\xl^\mathrm{T}\tbF_{\xl}-\mean{\Phi}_\xl\mean{h\Bx}_\xl-g\mean{h\vx}_\xl\mean{b}_\xl \\
	&-\frac12\jump{\psi_1}_\xr-\frac12\jump{\psi_1}_\xl-\frac12g\jump{hb\vx}_\xr-\frac12g\jump{hb\vx}_\xl \\
	=&\left(\mean{\bV}_\xr^\mathrm{T}\tbF_{\xr}+\mean{\Phi}_\xr\mean{h\Bx}_\xr+g\mean{h\vx}_\xr\mean{b}_\xr\right)
	-\mean{\psi_1}_\xr -g\mean{hb\vx}_\xr \\
	&-\left(\mean{\bV}_\xl^\mathrm{T}\tbF_{\xl}+\mean{\Phi}_\xl\mean{h\Bx}_\xl+g\mean{h\vx}_\xl\mean{b}_\xl\right)
	+\mean{\psi_1}_\xl +g\mean{hb\vx}_\xl
\\
=& \widetilde{q}_\xr-\widetilde{q}_\xl,
	\end{align*}
	where $a_i=\mean{a}_\xr-\frac12\jump{a}_\xr$ and $a_{i}=\mean{a}_\xl+\frac12\jump{a}_\xl$ have been used in the first equality,
	the condition \eqref{eq:conserFlux} has been used in the second equality,	and $\frac12\jump{a}_\xr+\frac12\jump{a}_\xl=\mean{a}_\xr-\mean{a}_\xl$ has been used in the  third   equality.
	Thus the scheme \eqref{eq:1Dsemi} with $ \widehat{\bF}_{\xr}= \tbF_1(\bU_i,\bU_{i+1})$ is EC in the sense of
	\begin{align*}
	\dfrac{\dd\eta_i}{\dd t}+\dfrac{1}{\Delta x}\left(\widetilde{q}_\xr-\widetilde{q}_\xl\right)=0.
	\end{align*}
	The discretization of the source terms is second-order accurate since the second-order central difference is used,
	and the results in \cite{Tadmor1987} show that the discretization of the flux gradient is second-order accurate,
	therefore the the scheme \eqref{eq:1Dsemi} with $ \widehat{\bF}_{\xr}= \tbF_1(\bU_i,\bU_{i+1})$ is second-order accurate.
\end{proof}

\begin{rmk}
	Since the central difference is used for approximating the non-flat river bed bottom and  Janhunen source terms,
	 the sufficient condition \eqref{eq:conserFlux} is different
	from that in \cite{Fjordholm2011,Winters2016}. Moreover, such discretization of the source terms
	is essential to achieve high-order accuracy and well-balance, see the subsection \ref{subsec:HOEC}.
\end{rmk}

Below we present such   EC flux satisfying \eqref{eq:conserFlux}.

\begin{thm}\label{thm:ECFlux}
  For the $x$-split
  SWMHD equations \eqref{eq:symm1D}, the following flux $\tbF_1(\bU_i,\bU_{i+1})$
  \begin{align}\label{eq:ecFlux1D}
  \begin{aligned}
  \tbF_1=\begin{pmatrix}
  \mean{h}\mean{\vx}\\
  \mean{h}\mean{\vx}^2+\dfrac{g}{2}\mean{h^2}-\mean{h\Bx}\mean{\Bx}+g\left(\mean{hb}-\mean{h}\mean{b}\right)\\
  \mean{h}\mean{\vx}\mean{\vy}-\mean{h\Bx}\mean{\By}\\
  \mean{h}\mean{\vx}\mean{\Bx}-\mean{h\Bx}\mean{\vx}\\
  \mean{h}\mean{\vx}\mean{\By}-\mean{h\Bx}\mean{\vy}
  \end{pmatrix}
  \end{aligned}
  \end{align}
  is an EC flux, consistent with the physical flux $\bF_1(\bU)$ defined in \eqref{eq:SWMHD_comp}.
\end{thm}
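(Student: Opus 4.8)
The plan is to establish the two properties an EC flux must have: consistency with the physical flux $\bF_1$ of \eqref{eq:SWMHD_comp}, and the sufficient condition \eqref{eq:conserFlux} of Lemma~\ref{lem:ECFlux}. Consistency is immediate: putting $\bU_i=\bU_{i+1}=\bU$ turns every mean into the underlying pointwise value and annihilates the term $g(\mean{hb}-\mean{h}\mean{b})$, so each entry of $\tbF_1$ collapses to the matching entry of $\bF_1(\bU)$. The real content is therefore the algebraic identity \eqref{eq:conserFlux}.

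For that identity I would compute $\jump{\bV}^\mathrm{T}\tbF_1$ and match it termwise to the prescribed right-hand side, using throughout the elementary jump rules $\jump{ab}=\mean{a}\jump{b}+\mean{b}\jump{a}$ and $\jump{a^2}=2\mean{a}\jump{a}$. First I would expand the jump of the entropy variables, obtaining $\jump{V_1}=g\jump{h}+g\jump{b}-\mean{\vx}\jump{\vx}-\mean{\vy}\jump{\vy}-\mean{\Bx}\jump{\Bx}-\mean{\By}\jump{\By}$ while $\jump{V_2}=\jump{\vx}$, $\jump{V_3}=\jump{\vy}$, $\jump{V_4}=\jump{\Bx}$, $\jump{V_5}=\jump{\By}$. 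Forming $\sum_{k}\jump{V_k}(\tbF_1)_k$ and sorting the resulting monomials by their mean-valued prefactor, I expect four groups. The terms carrying $\mean{h}\mean{\vx}$ cancel identically, the contribution of $\jump{V_1}$ exactly opposing the $\mean{h}$-parts of the momentum and induction components. The terms carrying $\mean{h\Bx}$ recombine, via the product rule applied to $\vx\Bx$ and $\vy\By$, into $-\jump{\Phi}\mean{h\Bx}$. The genuinely gravitational terms, namely $g\mean{h}\mean{\vx}\jump{h}$ from $\jump{V_1}$ and $\tfrac{g}{2}\mean{h^2}\jump{\vx}$ from the momentum component, assemble precisely into $\jump{\psix}=\tfrac{g}{2}\jump{h^2\vx}$.

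The step requiring genuine care, and the one that justifies the non-standard entry $g(\mean{hb}-\mean{h}\mean{b})$ in the momentum flux, is the topography group. Here the piece $g\mean{h}\mean{\vx}\jump{b}$ hidden inside $\jump{V_1}(\tbF_1)_1$ must combine with $g(\mean{hb}-\mean{h}\mean{b})\jump{\vx}$ coming from the momentum component to reproduce $g\jump{hb\vx}-g\jump{h\vx}\mean{b}$. Expanding the latter by the product rule gives $g(\mean{hb}-\mean{h}\mean{b})\jump{\vx}+g\mean{h}\mean{\vx}\jump{b}$, which is exactly the sum just described; this is the crux of the verification. The remaining groups follow the familiar SWMHD pattern, so once the topography match is confirmed the identity \eqref{eq:conserFlux} holds and the stated flux is EC and consistent.
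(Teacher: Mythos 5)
Your proposal is correct and follows essentially the same route as the paper's proof: both expand every quantity in \eqref{eq:conserFlux} in jumps of the parameter vector $(h,b,\vx,\vy,\Bx,\By)$ via $\jump{ab}=\mean{a}\jump{b}+\mean{b}\jump{a}$ and match terms, the paper reading the computation as a derivation of the flux by equating coefficients, you reading it as a termwise verification of the given flux, with consistency checked identically by setting $\bU_i=\bU_{i+1}$. One remark in your favor: your topography expansion $g\jump{hb\vx}-g\jump{h\vx}\mean{b}=g\left(\mean{hb}-\mean{h}\mean{b}\right)\jump{\vx}+g\mean{h}\mean{\vx}\jump{b}$ is the correct identity, whereas the paper's proof misprints the coefficient of $\jump{\vx}$ as $g\mean{hb}$, which is inconsistent with the flux \eqref{eq:ecFlux1D} it goes on to produce, so your version is the one that actually closes the argument.
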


\begin{proof}
	The key is to use the identity
	\begin{equation}\label{eq:jumpid}
	\jump{ab}=\mean{a}\jump{b}+\mean{b}\jump{a},
	\end{equation}
	and rewrite the jumps of the entropy variables $\bV$, the
	potential $\psix$, $\Phi$, $h\vx$ and $hb\vx$ as some linear combinations of the jumps of a specially chosen
	parameter vector.
	For simplicity in derivation,  the parameter vector is chosen as $\left(h,b,\vx,\vy,\Bx,\By\right)$, then
	\begin{align*}
	\jump{\bV}^\mathrm{T}=&\Big(g\jump{h}+g\jump{b}-\mean{\vx}\jump{\vx}-\mean{\vy}\jump{\vy}-\mean{\Bx}\jump{\Bx}-\mean{\By}\jump{\By},\\
	&\jump{\vx},~\jump{\vy},~\jump{\Bx},~\jump{\By}\Big),\\
	\jump{\psi_1}=&g\mean{h}\mean{\vx}\jump{h}+\frac12g\mean{h^2}\jump{\vx},\\
	\jump{\Phi}=&\mean{\Bx}\jump{\vx}+\mean{\By}\jump{\vy}+\mean{\vx}\jump{\Bx}+\mean{\vy}\jump{\By},\\
	g\jump{hb\vx}-g\jump{h\vx}\mean{b}=&g\mean{h}\mean{\vx}\jump{b}+g\mean{hb}\jump{\vx}.
	\end{align*}
	Substituting them into \eqref{eq:conserFlux},
	and equating the coefficients of the same jump terms gives the numerical flux in \eqref{eq:ecFlux1D}.
	
	If letting $\bU_i=\bU_{i+1}$, it is easy to see the consistency of the numerical flux in \eqref{eq:ecFlux1D}.
\end{proof}

\begin{rmk} 
  For the $y$-split system of \eqref{eq:symm},
  the rotational invariance may be used to get the  EC fluxes consistent
  to $\bF_2(\bU)$ defined in \eqref{eq:SWMHD_comp}.
    The EC flux \eqref{eq:ecFlux1D} is the same as the one in \cite{Winters2016}.
\end{rmk}

\begin{rmk}
The present discretization of the source terms is different from that in \cite{Winters2016},
  and does not meet the computational issue in \cite{Winters2016} if $\mean{\Bx}=0$ or $\mean{\By}=0$. 
\end{rmk}

\begin{rmk}
  If the magnetic field $\vec B\equiv 0$, then the SWMHD equations reduce  to the shallow water equations (SWEs)  and the above SWMHD scheme   \eqref{eq:1Dsemi} with $ \widehat{\bF}_{\xr}= \tbF_1(\bU_i,\bU_{i+1})$  defined in \eqref{eq:ecFlux1D} reduces to the well-balanced EC scheme for the SWEs
  with the EC flux
 \begin{align*}
 \begin{aligned}
 \tbF_1=\begin{pmatrix}
 \mean{h}\mean{\vx}\\
 \mean{h}\mean{\vx}^2+\dfrac{g}{2}\mean{h^2}+g\left(\mean{hb}-\mean{h}\mean{b}\right)\\
 \mean{h}\mean{\vx}\mean{\vy}
 \end{pmatrix},
 \end{aligned}
 \end{align*}
  which is the same as the EC flux in \cite{Fjordholm2011}, except
  for the second component 
  due to the different approximation of the source terms.
\end{rmk}

Lemma \ref{lem:ECFlux} and Theorem \ref{thm:ECFlux} tell us that
the SWMHD scheme \eqref{eq:1Dsemi} for \eqref{eq:symm1D} with $ \widehat{\bF}_{\xr}= \tbF_1(\bU_i,\bU_{i+1})$  defined in \eqref{eq:ecFlux1D}
is second-order accurate and EC.
Moreover, we can show that it is well-balanced.

\begin{thm}\label{Theorem3.3}  
The scheme \eqref{eq:1Dsemi} with the EC flux \eqref{eq:ecFlux1D} is well-balanced,
in the sense that when the magnetic field is zero, it preserves the lake at rest, that is to say, for the given initial data
\begin{equation*}
(\vx)_i=(\vy)_i\equiv 0, ~h_i+b_i\equiv C,~\forall i,
\end{equation*}
 the solutions of \eqref{eq:1Dsemi} satisfy
\begin{equation*}
\dfrac{\dd}{\dd t}h_i\equiv 0,~\dfrac{\dd}{\dd t}(h\vx)_i\equiv 0,~\dfrac{\dd}{\dd t}(h\vy)_i\equiv 0.
\end{equation*}
\end{thm}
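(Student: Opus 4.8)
The plan is to substitute the lake-at-rest data directly into the scheme \eqref{eq:1Dsemi} with the EC flux \eqref{eq:ecFlux1D} and to verify the three claimed identities componentwise, exploiting the fact that almost every term in \eqref{eq:ecFlux1D} carries a factor of a velocity or a magnetic-field component and therefore drops out. First I would observe that under $\bm{B}\equiv\bm{0}$ and $(\vx)_i=(\vy)_i\equiv0$ one has $\mean{\vx}=\mean{\vy}=\mean{\Bx}=\mean{\By}=\mean{h\Bx}\equiv0$ at every interface, so that the first, third, fourth and fifth components of $\tbF_1$ in \eqref{eq:ecFlux1D} vanish identically. Since $\Psi=(0,0,0,\bv^\mathrm{T})$ is the zero vector when the velocity is zero, the entire Janhunen source term in \eqref{eq:1Dsemi} contributes nothing to any equation. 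Consequently the right-hand sides of the $h$- and $(h\vy)$-equations are differences of identically vanishing fluxes, giving $\dd h_i/\dd t\equiv0$ and $\dd(h\vy)_i/\dd t\equiv0$ immediately.

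The only nontrivial identity is the one for $(h\vx)_i$. Here the surviving flux is the second (momentum) component of $\tbF_1$, namely $\frac{g}{2}\mean{h^2}-\mean{h\Bx}\mean{\Bx}+g(\mean{hb}-\mean{h}\mean{b})$, which under $\bm{B}\equiv\bm{0}$ reduces to $\frac{g}{2}\mean{h^2}+g(\mean{hb}-\mean{h}\mean{b})$, while the bottom-topography source contributes $-gh_i(\mean{b}_\xr-\mean{b}_\xl)/\Delta x$. The claim $\dd(h\vx)_i/\dd t\equiv0$ is therefore equivalent to the balance
\begin{equation*}
\left[\tfrac{g}{2}\mean{h^2}+g(\mean{hb}-\mean{h}\mean{b})\right]_\xr-\left[\tfrac{g}{2}\mean{h^2}+g(\mean{hb}-\mean{h}\mean{b})\right]_\xl+gh_i\left(\mean{b}_\xr-\mean{b}_\xl\right)=0.
\end{equation*}
I would prove this by inserting the constraint $b_j=C-h_j$, which turns every occurrence of $b$ into the constant $C$ minus $h$. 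The constant $C$ enters the $g\mean{hb}$ and $-g\mean{h}\mean{b}$ pieces with opposite signs and cancels, and the remaining purely-$h$ terms combine—after expanding $\mean{h^2}$, $\mean{hb}$ and $\mean{h}\mean{b}$ at both interfaces and writing everything in terms of $h_{i+1}^2-h_{i-1}^2$ and $h_i(h_{i+1}-h_{i-1})$—into a sum whose coefficients add to zero.

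The main obstacle is precisely this last algebraic cancellation in the momentum component, and it is where the specific structure of the flux \eqref{eq:ecFlux1D} is used: the extra term $g(\mean{hb}-\mean{h}\mean{b})$, which distinguishes our flux from the plain hydrostatic flux, is exactly what matches the central-difference discretization $\mean{b}_\xr-\mean{b}_\xl$ of $\partial b/\partial x$ in \eqref{eq:1Dsemi}. I expect the cleanest route is to check that the coefficient of $g(h_{i+1}^2-h_{i-1}^2)$ works out to $\frac14-\frac12+\frac14=0$, and that the coefficient of $gh_i(h_{i+1}-h_{i-1})$ produced by the flux cancels the source contribution $-\frac{g}{2}h_i(h_{i+1}-h_{i-1})$; together these yield the displayed balance and complete the proof.
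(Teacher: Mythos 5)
Your proposal is correct and follows essentially the same route as the paper's proof: both reduce the problem to the momentum component (the $h$, $h\vy$ equations are trivial since every relevant flux entry carries a velocity or magnetic-field factor and $\Psi$ vanishes with the velocity), and both verify the same balance between the flux terms $\frac{g}{2}\mean{h^2}+g(\mean{hb}-\mean{h}\mean{b})$ and the source term $gh_i(\mean{b}_\xr-\mean{b}_\xl)$. The only difference is bookkeeping: the paper keeps $b$ general and uses jump/mean identities to factor the residual into $-\frac{g}{2\Delta x}\bigl[\mean{h}_\xr\jump{h+b}_\xr+\mean{h}_\xl\jump{h+b}_\xl\bigr]$, which vanishes because $\jump{h+b}\equiv 0$, whereas you substitute $b_j=C-h_j$ and check that the coefficients of $h_{i+1}^2-h_{i-1}^2$ and of $h_i(h_{i+1}-h_{i-1})$ cancel; the two computations are equivalent, though the paper's factored form has the side benefit of exhibiting the residual in terms of interface jumps of the equilibrium quantity $h+b$.
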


\begin{proof} Under the hypotheses,
one can verify that   the scheme \eqref{eq:1Dsemi}
 satisfies
	$\dfrac{\dd}{\dd t}h_i\equiv 0,~\dfrac{\dd}{\dd t}(h\vy)_i\equiv 0$ and
	\begin{align*}
	\dfrac{\dd}{\dd t}(h\vx)_i
	=&-\dfrac{g}{\Delta x}\Big[\left(\dfrac{1}{2}\mean{h^2}_\xr-\dfrac{1}{2}\mean{h^2}_\xl\right)+\left(\mean{hb}_\xr-\mean{hb}_\xl\right) \\
	&-\left(\mean{h}_\xr\mean{b}_\xr-\mean{h}_\xl\mean{b}_\xl\right)+h_i\left(\mean{b}_\xr-\mean{b}_\xl\right)\Big] \\
	=&-\dfrac{g}{\Delta x}\Big[\dfrac{1}{2}\left(\mean{h}_\xr\jump{h}_\xr+\mean{h}_\xl\jump{h}_\xl\right)
	+\left(\mean{h}_\xr b_{i+1}-\mean{h}_\xl b_{i-1}\right)\\
	&-\left(\mean{h}_\xr\mean{b}_\xr-\mean{h}_\xl\mean{b}_\xl\right) \Big]\\
	=&-\dfrac{g}{\Delta x}\Big[\dfrac{1}{2}\left(\mean{h}_\xr\jump{h}_\xr+\mean{h}_\xl\jump{h}_\xl\right)
	+\dfrac{1}{2}\left(\mean{h}_\xr\jump{b}_\xr+\mean{h}_\xl\jump{b}_\xl\right) \Big]\\
	=&-\dfrac{g}{2\Delta x}\Big[\mean{h}_\xr\jump{h+b}_\xr+\mean{h}_\xl\jump{h+b}_\xl \Big] \\
	\equiv&0.
	\end{align*}
Therefore the scheme \eqref{eq:1Dsemi} preserves the lake at rest.
\end{proof}

\begin{rmk}\label{remark3.5} The second-order EC scheme \eqref{eq:1Dsemi} satisfies
	the 1D moving equilibrium state \cite{Fjordholm2011}
	\begin{equation*}
	  m_\xr=\mean{h}_\xr\mean{\vx}_\xr\equiv C_1, ~p_i=(\vx)_i^2/2+g(h_i+b_i)\equiv C_2,~\forall i.
	\end{equation*}
 In fact, it is easy to verify that
	\begin{align*}
	&\dfrac{\dd}{\dd t}h_i
	=-\dfrac{1}{\Delta x}\left(m_\xr-m_\xl\right)\equiv0,  \\
	&\dfrac{\dd}{\dd t}(h\vx)_i
    =-\dfrac{g}{\Delta x}\Big[\frac12\left(\mean{h}_\xr\jump{p}_\xr+\mean{h}_\xl\jump{p}_\xl\right) + (\vx)_i\left(m_\xr-m_\xl\right) \Big]\equiv0.
	\end{align*}
\end{rmk}
\subsection{High-order EC schemes}\label{subsec:HOEC}
To develop the high-order well-balanced EC schemes, our task is to
get the high-order numerical fluxes and conduct the matched high-order discretization of the source term
related to the non-flat river bed bottom and  the Janhunen source term in \eqref{eq:symm1D}.

 Following the  way in \cite{Lefloch2002},
the EC flux of the $2p$th-order ($p\in \bbN^+$) accurate scheme
can be obtained by
using the linear combinations of the ``second-order accurate'' EC flux \eqref{eq:ecFlux1D} as follows
\begin{equation}\label{eq:ecFlux1D-high}
\tbF^{2p\mbox{\scriptsize th}}_\xr=\sum_{r=1}^p\alpha_r^p\sum_{s=0}^{r-1}\tbF_1(\bU_{i-s},\bU_{i-s+r}),
\end{equation}
which satisfies
\begin{equation*}
\dfrac{1}{\Delta x}\left(\tbF^{2p\mbox{\scriptsize th}}_\xr-\tbF^{2p\mbox{\scriptsize th}}_\xl\right)=
{\pd{\bF_1}{x}\Big|_{i}+}\mathcal{O}(\Delta x^{2p}).
\end{equation*}
The readers are referred to \cite{Fjordholm2012,Lefloch2002} for more details on
constructing the ``high-order accurate'' EC flux.

To make the resulting schemes  high-order   accurate, well-balanced and EC, it is essential that the high-order finite difference approximations of the spatial derivatives $\partial{(h\Bx)}/\partial{x}$ and $\partial{b}/\partial{x}$ in the source terms should match the ``high-order accurate'' EC flux \eqref{eq:ecFlux1D-high}. 
To this end, based on the observation
that the second-order central differences for the source terms
\begin{equation*}
\dfrac{(h\Bx)_{i+1}-(h\Bx)_{i-1}}{2\Delta x}=\dfrac{\mean{h\Bx}_\xr-\mean{h\Bx}_\xl}{\Delta x},~
\dfrac{b_{i+1}-b_{i-1}}{2\Delta x}=\dfrac{\mean{b}_\xr-\mean{b}_\xl}{\Delta x},
\end{equation*}
used in the second-order EC scheme \eqref{eq:1Dsemi}, have
the same form as the discretization of the flux gradient,
  using those second-order central differences as a building block can obtain the high-order approximations of the source terms
as follows
\begin{equation*}
(\widetilde{{h\Bx}})^{2p\mbox{\scriptsize th}}_\xr=\dfrac12\sum_{r=1}^p\alpha_r^p\sum_{s=0}^{r-1}
\left[(h\Bx)_{i-s}+(h\Bx)_{i-s+r}\right],\ \
~(\widetilde{b})^{2p\mbox{\scriptsize th}}_\xr=\dfrac12\sum_{r=1}^p\alpha_r^p\sum_{s=0}^{r-1}\left(b_{i-s}+b_{i-s+r}\right),
\end{equation*}
where the linear combination coefficients are the same as those
in the ``high-order accurate EC flux'' \eqref{eq:ecFlux1D-high}.
Similarly, it is not difficult  to verify
\begin{align*}
\dfrac{1}{\Delta x}\left(\widetilde{(h\Bx)}^{2p\mbox{\scriptsize th}}_\xr-\widetilde{(h\Bx)}^{2p\mbox{\scriptsize th}}_\xl\right)={\pd{(h\Bx)}{x}\Big|_{i}+}\mathcal{O}(\Delta x^{2p}),\\
~\dfrac{1}{\Delta x}\left(\widetilde{(b)}^{2p\mbox{\scriptsize th}}_\xr-\widetilde{(b)}^{2p\mbox{\scriptsize th}}_\xl\right)={\pd{b}{x}\Big|_{i}+}\mathcal{O}(\Delta x^{2p}).
\end{align*}
Such treatment can also be found in \cite{Derigs2018}.

In summary, by approximating \eqref{eq:symm1D}, we  obtain the following $2p$th order semi-discrete EC scheme
\begin{equation}\label{eq:1Dsemi_HO}
\dfrac{\dd}{\dd t}\bU_i
=-\dfrac{1}{\Delta x}\left(\tbF^{2p\mbox{\scriptsize th}}_\xr-\tbF^{2p\mbox{\scriptsize th}}_\xl\right)
-\dfrac{\Psi^\mathrm{T}_i}{\Delta x}\left(\widetilde{(h\Bx)}^{2p\mbox{\scriptsize th}}_\xr-\widetilde{(h\Bx)}^{2p\mbox{\scriptsize th}}_\xl\right)
-\dfrac{(\bm{G}_1)_i^\mathrm{T}}{\Delta x}\left(\widetilde{(b)}^{2p\mbox{\scriptsize th}}_\xr-\widetilde{(b)}^{2p\mbox{\scriptsize th}}_\xl\right),
\end{equation}
which satisfies  the entropy identity
 \eqref{eq:discEnEq}
with the numerical entropy flux
\begin{equation*}
  (\widetilde{q})^{2p\mbox{\scriptsize th}}_\xr=\sum_{r=1}^p\alpha_r^p\sum_{s=0}^{r-1}\widetilde{q}(\bU_{i-s},\bU_{i-s+r}).
\end{equation*}
It is a linear combination of the two-point numerical entropy flux \eqref{eq:numEntropyFlux}.
For example, when $p=3$, the  expression of the ``$6$th-order accurate'' EC flux is explicitly given as follows
\begin{align}\label{eq:6thEC}
  \widetilde{\bF}^{6\mbox{\scriptsize th}}_{\xr}=&\dfrac32\widetilde{\bF}{_1}(\bU_i,\bU_{i+1})
  -\dfrac{3}{10}\left[\widetilde{\bF}{_1}(\bU_{i-1},\bU_{i+1})
  +\widetilde{\bF}{_1}(\bU_i,\bU_{i+2})\right]\nonumber\\
  &+\dfrac{1}{30}\left[\widetilde{\bF}{_1}(\bU_{i-2},\bU_{i+1})
  +\widetilde{\bF}{_1}(\bU_{i-1},\bU_{i+2})
  +\widetilde{\bF}{_1}(\bU_i,\bU_{i+3})\right].
\end{align}

It can also be verified that
 the scheme \eqref{eq:1Dsemi_HO} is well-balanced in the sense of Theorem \ref{Theorem3.3},
  since the numerical fluxes and the numerical source terms in \eqref{eq:1Dsemi_HO} are formed by the linear combinations of the fluxes and the source terms in the second-order scheme \eqref{eq:1Dsemi} with the same coefficients,
  	specifically, the second equation in \eqref{eq:1Dsemi_HO} is
  written as follows
  	\begin{align*}
  	\dfrac{\dd}{\dd t}(h\vx)_i
  	=&-\dfrac{g}{2\Delta x}\sum_{r=1}^p\alpha_r^p\Big[\dfrac{h_{i+r}+h_i}{2}\left((h+b)_{i+r}-(h+b)_i\right)+\dfrac{h_{i}+h_{i-r}}{2}\left((h+b)_{i}-(h+b)_{i-r}\right) \Big].
  	\end{align*}
For the 1D moving equilibrium states discussed in Remark \ref{remark3.5}, one needs to impose very restrictive conditions
	\begin{align*}
	  & \left(h_{i}+h_{i\pm r}\right)\left((\vx)_{i}+(\vx)_{i\pm r}\right)\equiv C_1, ~\forall i, ~ r=1,\cdots,p, \\
	  &p_i=(\vx)_i^2/2+g(h_i+b_i)\equiv C_2,~\forall i.
	\end{align*}

\subsection{ES schemes}

It is known that for the quasi-linear hyperbolic conservation laws or balance laws,
the entropy identity is available only if the solution is smooth.
In other words, the entropy is not conserved if the discontinuities such as the
shock waves appear in the solution.
Moreover, the EC scheme may produce serious unphysical oscillations near
the discontinuities.
 Those motivate us to develop the ES scheme in this section by adding a suitable dissipation term to the EC scheme to avoid the unphysical oscillations produced by the EC scheme and to satisfy the entropy inequality for { the } given
entropy pair.

Following \cite{Tadmor1987}, adding a dissipation term  to the EC flux $\widetilde{\bm{F}}_{\xr}$ gives the ES flux
\begin{align}\label{eq:stableflux}
  \widehat{\bF}_{\xr}=\widetilde{\bF}_{\xr}-\dfrac12 \bm{D}_{\xr}\jump{\bV}_{\xr},
\end{align}
satisfying
\begin{equation}\label{eq:es_flux}
  \jump{\bV}^\mathrm{T}\cdot \widehat{\bF}_{\xr}-\jump{\psix}+\jump{\Phi}\mean{h\Bx}-g\jump{hb\vx}+g\jump{h\vx}\mean{b}\leqslant 0,
\end{equation}
where $\bm{D}_{\xr}$ is  a symmetric positive semi-definite matrix.
It is easy to prove that the scheme \eqref{eq:1Dsemi} or \eqref{eq:1Dsemi_HO} with the numerical flux \eqref{eq:stableflux}
is ES, i.e,  satisfying the semi-discrete entropy inequality
\begin{equation*}
  \dfrac{\dd}{\dd t}\eta(\bU_i(t))+\dfrac{1}{\Delta x}\left(\widehat{q}_{\xr}(t)-\widehat{q}_{\xl}(t)\right)\leqslant 0,
\end{equation*}
for some numerical entropy flux function $\widehat{q}_{\xr}$ consistent with the physical entropy flux $q_1$.


	Motivated by the Cholesky decomposition and
the dissipation term in the (local) Lax-Friedrichs flux
		\begin{equation*}
		-\dfrac12\alpha_\xr\jump{\bU}_\xr
		\approx-\dfrac12\alpha_\xr\pd{\bU}{\bV}\Big|_\xr\jump{\bV}
	=-\dfrac12\alpha_\xr\bm{R}_\xr\bm{R}^\mathrm{T}_\xr\jump{\bV},
		\end{equation*}
		the matrix $\bm{D}_\xr$ in \eqref{eq:stableflux} can be chosen as
		\begin{equation*}
		\bm{D}_{\xr}=\alpha_\xr\bm{R}_\xr\bm{R}^\mathrm{T}_\xr.
		\end{equation*}
		Here $\alpha_\xr=\max_{m=i,i+1}\left\{\abs{(\vx^n)_m}
    +\sqrt{gh_m^n+(\Bx^n)_m^2} \right\}$ and $\bm{R}\bm{R}^T$ is the Cholesky decomposition of the matrix $\pd{\bU}{\bV}$ with
\begin{equation*}
		  \bm{R}=\begin{pmatrix}
		  1/\sqrt{g} & 0 & 0 & 0 & 0 \\
		  \vx/\sqrt{g} & \sqrt{h} & 0 & 0 & 0 \\
		  \vy/\sqrt{g} & 0 & \sqrt{h} & 0 & 0 \\
		  \Bx/\sqrt{g} & 0 & 0 & \sqrt{h} & 0 \\
		  \By/\sqrt{g} & 0 & 0 & 0 & \sqrt{h}
		  \end{pmatrix},
		\end{equation*}
and $\alpha_\xr$ and $\bm{R}_{\xr}$ are calculated by using the arithmetic mean values
$\mean{h}_{\xr}$, $\mean{\bv}_{\xr}$, and
		$\mean{\bm{B}}_{\xr}$.


To obtain the arbitrary high-order accurate ES scheme,
the dissipation term in \eqref{eq:stableflux} has to
be improved.
For example, it can be done by using the ENO reconstruction of the scaled entropy variables
$\bw=\bm{R}^\mathrm{T}\bV$ \cite{Fjordholm2012}.
More specifically, use the $k$th order accurate ENO
reconstruction of $\bw$ to obtain the left and right limit values at $x_{\xr}$, denoted by
$\bw_{\xr}^-$ and $\bw_{\xr}^+$, and then define
\begin{equation*}
  \jumpangle{\bw}_{\xr}=\bw_{\xr}^+-\bw_{\xr}^-.
\end{equation*}
Combining such reconstructed jump with the ``$2p$th-order EC flux''
$\tilde{\bF}^{2p\mbox{\scriptsize th}}$ and the $2p$th-order discretization of the source terms
gives the $k$th order ES scheme as follows
\begin{equation}\label{eq:1DES_HO}
\dfrac{\dd}{\dd t}\bU_i=-\dfrac{1}{\Delta x}\left(\widehat{\bF}^{k\mbox{\scriptsize th}}_\xr-\widehat{\bF}^{k\mbox{\scriptsize th}}_\xl\right)
-\dfrac{\Psi_i^\mathrm{T}}{\Delta x}\left(\widetilde{(h\Bx)}^{2p\mbox{\scriptsize th}}_\xr-\widetilde{(h\Bx)}^{2p\mbox{\scriptsize th}}_\xl\right)
-\dfrac{(\bm{G}_1)_i^\mathrm{T}}{\Delta x}\left(\widetilde{(b)}^{2p\mbox{\scriptsize th}}_\xr-\widetilde{(b)}^{2p\mbox{\scriptsize th}}_\xl\right),
\end{equation}
where   $p=k/2$ for even $k$ and $p=(k+1)/2$ for odd $k$, and
\begin{equation}\label{eq:HOstable}
  \widehat{\bF}^{k\mbox{\scriptsize th}}_{\xr}=\widetilde{\bF}^{2p\mbox{\scriptsize th}}_{\xr}
  -\dfrac12 \alpha_\xr\bm{R}_{\xr}\jumpangle{\bw}_{\xr}.
\end{equation}
The semi-discrete numerical schemes \eqref{eq:1DES_HO}
is ES if the reconstruction satisfies the  ``sign'' property \cite{Fjordholm2012}
\begin{equation*}
  \text{sign}(\jumpangle{\bw}_{\xr})=\text{sign}(\jump{\bw}_{\xr}).
\end{equation*}
which does hold for the ENO reconstructions \cite{Fjordholm2013}.

Moreover, one can also obtain higher-order accurate ES scheme with the WENO reconstruction instead of
the ENO reconstruction, if the same number of candidate points values are used.
In view of that a general WENO reconstruction may not satisfy the ``sign'' property, following \cite{Biswas2018},
the dissipation term in \eqref{eq:stableflux} may be modified as follows
\begin{equation}\label{eq:switch}
  	\widehat{\bF}^{k\mbox{\scriptsize th}}_{\xr}=\widetilde{\bF}^{2p\mbox{\scriptsize th}}_{\xr}
  	-\dfrac12 \alpha_\xr\bm{S}_{\xr}\bm{R}_{\xr}\jumpangle{\bw}_{\xr}.
\end{equation}
where $\bm{S}^l_{\xr}$ is a switch function defined by
\begin{equation*}
  \bm{S}^l_{\xr}=\begin{cases}
    1,\quad &\text{if}
    ~\text{sign}(\jumpangle{\bw}^l_{\xr})=\text{sign}(\jump{\bw}^l_{\xr})\not= 0, \\
    0,\quad &\text{otherwise},
  \end{cases}
\end{equation*}
here the superscript $l$ denotes the $l$th entry of the diagonal matrix $\bm{S}_{\xr}$
or the $l$th component of the jump of $\bw$.
One can verify that  the adding dissipation term becomes zero when the WENO reconstruction does not satisfy the ``sign''
property, and thus the semi-discrete numerical
scheme with the flux \eqref{eq:switch} is ES.

\begin{rmk}
	At the steady state, the entropy variables $\bV^\mathrm{T}$ become $\left(gC,0,0,0,0\right)$,
	so that   the low-order dissipation term with $\jump{\bV}$ and the high-order dissipation term with
	$\jumpangle{\bw}=\bm{R}\jumpangle{\bV}$  all vanish.
Thus the constructed ES schemes are well-balanced.
\end{rmk}

\subsection{Time discretization}
This paper  uses the following third-order accurate strong stability preserving explicit Runge-Kutta (RK3) time
discretization
\begin{equation}\label{eq:rk3}
\begin{aligned}
  &\bU^{(1)}=\bU^n+\Delta t \bm{L}(\bU^n),\\
  &\bU^{(2)}=\dfrac34\bU^n+\dfrac14\left(\bU^{(1)}+\Delta t \bm{L}(\bU^{(1)})\right),\\
  &\bU^{n+1}=\dfrac13\bU^n+\dfrac23\left(\bU^{(2)}+\Delta t \bm{L}(\bU^{(2)})\right),
\end{aligned}
\end{equation}
to  integrate in time the semi-discrete schemes \eqref{eq:1Dsemi}, \eqref{eq:1Dsemi_HO} or \eqref{eq:1DES_HO},
where $[\bm{L}(\bU)]$ corresponds to their right-hand side.

\subsection{Positivity-preserving ES schemes}
  This section restricts to the flat bottom topography.
Generally, the high-order ES scheme \eqref{eq:1DES_HO}   integrated with the RK3 \eqref{eq:rk3}
may numerically produce the negative water height 
so that  the numerical simulation fails.
This section develops a high-order positivity-preserving  ES scheme
based on \eqref{eq:1DES_HO},
satisfying $h_i^{n+1}>\varepsilon,\forall i$, if $h_i^n>\varepsilon,\forall i$ for
a small positive number $\varepsilon$ (usually taken as $10^{-13}$
in numerical tests), which means there is no dry area in the solutions.
Since the RK3 \eqref{eq:rk3} is a convex combination of the forward Euler time discretization, it only needs to consider
the first component of the semi-discrete scheme \eqref{eq:1DES_HO} with the forward Euler time discretization, that is
\begin{align}
h^{n+1}_i=h^n_i-\dfrac{\Delta t}{\Delta x}\left[h(\widehat{\bF}_\xr^{k\mbox{\scriptsize th}}) - h(\widehat{\bF}_\xl^{k\mbox{\scriptsize th}})\right],
\end{align}
where $h(\widehat{\bF})$ denotes the first component of the vector $\widehat{\bF}$.

Before that, we first prove that the scheme \eqref{eq:1Dsemi} with the local Lax-Friedrichs flux is  positivity-preserving.

\begin{lem}
  The semi-discrete scheme \eqref{eq:1Dsemi}  discretized with the forward Euler time discretization and
  with the local Lax-Friedrichs flux
  \begin{align}\label{LLF-flux}
    \widehat{\bF}_\xr^{LF}=\mean{\bF_1}_\xr-{\alpha_\xr}\jump{\bU}_\xr/{2}, \quad
    \alpha_\xr=\max_{m=i,i+1}\left\{\abs{(\vx^n)_m}
    +\sqrt{gh_m^n+(\Bx^n)_m^2} \right\},
  \end{align}
  is positivity-preserving
  under the CFL condition
  \begin{align}\label{eq:CFL}
  \Delta t=\dfrac{\mu\Delta x}{\max\limits_{i}\left\{\abs{(\vx)_i}+\sqrt{gh_i+(\Bx)_i^2}\right\}},\quad \mu 
  \leqslant \frac12.
  \end{align}
\end{lem}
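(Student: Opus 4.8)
The plan is to exploit the reduction already made in the text: since RK3 \eqref{eq:rk3} is a convex combination of forward-Euler steps, it suffices to show that one forward-Euler step of the first (mass) component keeps $h$ positive, and the standard route for this is to exhibit $h_i^{n+1}$ as a nonnegative linear combination of $h_{i-1}^n,h_i^n,h_{i+1}^n$. Since the first component of $\bF_1$ is $h\vx$ and the first component of $\bU$ is $h$, the mass part of the local Lax--Friedrichs flux \eqref{LLF-flux} is $h(\widehat{\bF}^{LF}_\xr)=\mean{h\vx}_\xr-\frac12\alpha_\xr\jump{h}_\xr$. Substituting this and its counterpart at $\xl$ into the forward-Euler update and writing $\lambda=\Delta t/\Delta x$, I would collect the result as
\begin{equation*}
h_i^{n+1}=\frac{\lambda}{2}\big(\alpha_\xl+(\vx)_{i-1}\big)h_{i-1}^n
+\Big(1-\frac{\lambda}{2}(\alpha_\xr+\alpha_\xl)\Big)h_i^n
+\frac{\lambda}{2}\big(\alpha_\xr-(\vx)_{i+1}\big)h_{i+1}^n.
\end{equation*}

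The core of the argument is then to check that all three coefficients are nonnegative. For the two outer coefficients this is immediate from the form of the numerical viscosity in \eqref{LLF-flux}: one has $\alpha_\xl\geqslant\abs{(\vx^n)_{i-1}}\geqslant-(\vx)_{i-1}$ and $\alpha_\xr\geqslant\abs{(\vx^n)_{i+1}}\geqslant(\vx)_{i+1}$, so $\alpha_\xl+(\vx)_{i-1}\geqslant0$ and $\alpha_\xr-(\vx)_{i+1}\geqslant0$. For the central coefficient I would invoke the CFL condition \eqref{eq:CFL}: setting $\alpha_{\max}=\max_i\{\abs{(\vx)_i}+\sqrt{gh_i+(\Bx)_i^2}\}$, both $\alpha_\xr$ and $\alpha_\xl$ are bounded above by $\alpha_{\max}$, while \eqref{eq:CFL} gives $\lambda\alpha_{\max}=\mu\leqslant\frac12$; hence $\frac{\lambda}{2}(\alpha_\xr+\alpha_\xl)\leqslant\lambda\alpha_{\max}=\mu\leqslant\frac12$, so the coefficient of $h_i^n$ is at least $\frac12>0$.

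With all three coefficients nonnegative and the middle one strictly positive, positivity follows at once: if $h_m^n>0$ for every $m$, then each summand is nonnegative and the central term is strictly positive, so $h_i^{n+1}>0$. I expect the only real obstacle to be the bookkeeping in the coefficient collection together with the one conceptual point on which everything hinges, namely that the Lax--Friedrichs viscosity coefficient $\alpha$ dominates $\abs{\vx}$ pointwise; this is precisely what forces the outer coefficients to be nonnegative and is where the specific structure of the dissipation enters, while the sole role of the restriction $\mu\leqslant\frac12$ is to control the central coefficient. I would also remark that the three coefficients do not sum to one (their sum equals $1+\frac{\lambda}{2}[(\vx)_{i-1}-(\vx)_{i+1}]$), so the conclusion is strict positivity $h_i^{n+1}>0$ rather than a discrete maximum principle for $h$; enforcing the sharper bound $h_i^{n+1}>\varepsilon$ is the task of the subsequent flux limiter, for which this lemma is the building block.
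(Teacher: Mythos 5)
Your proof is correct: the coefficient collection, the inequalities $\alpha_\xl+(\vx)_{i-1}\geqslant 0$, $\alpha_\xr-(\vx)_{i+1}\geqslant 0$, and the CFL bound $\frac{\lambda}{2}(\alpha_\xr+\alpha_\xl)\leqslant\mu\leqslant\frac12$ all check out, and together they give $h_i^{n+1}\geqslant\frac12 h_i^n>0$. The route differs from the paper's in the decomposition, though the two key ingredients (pointwise domination $\alpha\geqslant\abs{\vx}$ and the CFL control of the central coefficient) are the same. The paper does not collect the three-point stencil directly; instead it splits the update into two half-states,
\begin{equation*}
h_i^{n+1}=\dfrac12\left(h^{+,\mbox{\scriptsize LF}}_i+h^{-,\mbox{\scriptsize LF}}_i\right),\qquad
h^{\pm,\mbox{\scriptsize LF}}_i=h_i^n\mp\dfrac{2\Delta t}{\Delta x}h(\widehat{\bF}_{i\pm\frac12}^{LF}),
\end{equation*}
and shows each one-sided update (which carries a doubled effective CFL number $2\mu\leqslant 1$, explaining the restriction $\mu\leqslant\frac12$) is a nonnegative two-point combination. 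The payoff of the paper's choice is structural rather than logical: the half-states $h^{\pm,\mbox{\scriptsize LF}}_i$ are exactly the quantities reused afterwards in the positivity-preserving flux limiter, where the scaling factors $\theta^{\pm}_\xr$ and the final positivity check of the limited scheme are expressed in terms of $h^{\pm,\mbox{\scriptsize LF}}$ and $h^{\pm,k\mbox{\scriptsize th}}$; your lemma proof would need to be recast in that split form to feed the limiter. What your version buys instead is a slightly more elementary argument with the explicit bound $h_i^{n+1}\geqslant\frac12 h_i^n$, which makes strict positivity immediate without any edge-case discussion, and your closing remark that the coefficients sum to $1+\frac{\lambda}{2}\left[(\vx)_{i-1}-(\vx)_{i+1}\right]$ (so one gets positivity, not a maximum principle) is a correct and worthwhile observation that the paper leaves implicit.
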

\begin{proof}
The first component of the Lax-Friedrichs scheme can be split as
\begin{align*}
h_i^{n+1}=:\dfrac12\left(h^{+,\mbox{\scriptsize LF}}_i+h^{-,\mbox{\scriptsize LF}}_i\right),\quad
h^{\pm,\mbox{\scriptsize LF}}_i=h_i^n\mp\dfrac{2\Delta t}{\Delta x}h(\widehat{\bF}_{i\pm \frac12}^{LF}),
\end{align*}
so it holds
\begin{align*}
  h^{\pm,\mbox{\scriptsize LF}}_i=h_i^n\left(1-\frac{\Delta t}{\Delta x}\left(\alpha_{i\pm\frac12} \pm(\vx)^n_i\right)\right)
  +\frac{\Delta t}{\Delta x}h_{i\pm1}^n\left(\alpha_{i\pm\frac12}\mp(\vx)^n_{i\pm1}\right).
\end{align*}
Thus $h^{\pm,\mbox{\scriptsize LF}}_i>0$ under the CFL condition \eqref{eq:CFL}, and then $h^{n+1}_i>0$.
\end{proof}

The following lemma shows that the Lax-Friedrichs flux \eqref{LLF-flux} is ES even if $h\Bx$ or $h\By$ is not constant. 
Its proof is direct without the assumption that the
1D exact Riemann solution of $x$-split system is ES.
In fact, when there are jumps in $h\Bx$ or $h\By$ at the cell interface in the 2D case, whether such assumption is available needs further investigation. 

\begin{lem}
	The Lax-Friedrichs flux is ES when the bottom topography is flat.
\end{lem}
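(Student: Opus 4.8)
The plan is to verify the entropy-stability inequality \eqref{eq:es_flux} directly for the Lax--Friedrichs flux \eqref{LLF-flux} by comparing it with the EC flux $\tbF_1$ of Theorem \ref{thm:ECFlux}. First, since the bottom is flat, $b$ is constant, so $\jump{hb\vx}=b\jump{h\vx}$ and $\mean{b}=b$, whence $g\jump{hb\vx}-g\jump{h\vx}\mean{b}=0$; the target inequality \eqref{eq:es_flux} therefore collapses to $\jump{\bV}^\mathrm{T}\widehat{\bF}_\xr-\jump{\psix}+\jump{\Phi}\mean{h\Bx}\leqslant 0$. For the same reason the EC identity \eqref{eq:conserFlux} reduces to $\jump{\bV}^\mathrm{T}\tbF_1=\jump{\psix}-\jump{\Phi}\mean{h\Bx}$, so the quantity to be controlled is exactly $\jump{\bV}^\mathrm{T}(\widehat{\bF}^{LF}_\xr-\tbF_1)$.

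Next I would insert \eqref{LLF-flux}, writing $\widehat{\bF}^{LF}_\xr-\tbF_1=(\mean{\bF_1}_\xr-\tbF_1)-\tfrac12\alpha_\xr\jump{\bU}$, so that
\begin{equation*}
\jump{\bV}^\mathrm{T}(\widehat{\bF}^{LF}_\xr-\tbF_1)=\jump{\bV}^\mathrm{T}(\mean{\bF_1}_\xr-\tbF_1)-\tfrac12\alpha_\xr\jump{\bV}^\mathrm{T}\jump{\bU}.
\end{equation*}
The dissipation term is manifestly favorable: using $\jump{ab}=\mean{a}\jump{b}+\mean{b}\jump{a}$ from \eqref{eq:jumpid} together with the explicit $\bV$, the cross terms cancel and one obtains the clean expression $\jump{\bV}^\mathrm{T}\jump{\bU}=g\jump{h}^2+\mean{h}\big(\jump{\vx}^2+\jump{\vy}^2+\jump{\Bx}^2+\jump{\By}^2\big)\geqslant 0$, since $\mean{h}>0$. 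Thus it remains to show that the ``central excess'' $\jump{\bV}^\mathrm{T}(\mean{\bF_1}_\xr-\tbF_1)$ is dominated by $\tfrac12\alpha_\xr$ times this nonnegative quadratic.

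To estimate the central excess I would expand it in the jumps and means of the parameter vector $(h,\vx,\vy,\Bx,\By)$, using \eqref{eq:jumpid} and $\mean{ab}=\mean{a}\mean{b}+\tfrac14\jump{a}\jump{b}$ to rewrite each component of $\mean{\bF_1}_\xr-\tbF_1$ (for instance the mass component is simply $\tfrac14\jump{h}\jump{\vx}$, and the momentum and induction components produce the analogous second-difference combinations of $h,\vx,\vy,\Bx,\By$). Contracting with $\jump{\bV}$ then yields a homogeneous cubic form in the jumps whose coefficients depend on the mean states.

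The hard part is the final scalar inequality: showing that this cubic form is bounded above by $\tfrac12\alpha_\xr\big[g\jump{h}^2+\mean{h}(\jump{\vx}^2+\jump{\vy}^2+\jump{\Bx}^2+\jump{\By}^2)\big]$. I expect to control it by grouping the excess into terms of the form (a mean velocity or field component, bounded by $\alpha_\xr$) times a jump-square, and bounding the remaining mixed terms by Cauchy--Schwarz / AM--GM, so that the choice $\alpha_\xr=\max_{m=i,i+1}\{|(\vx)_m|+\sqrt{gh_m+(\Bx)_m^2}\}$, which dominates every relevant characteristic speed at both endpoints, renders the right-hand side minus the left-hand side a sum of nonnegative terms. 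This wave-speed-based bound is the crux, since it is precisely where the magnitude of $\alpha_\xr$ must be shown to be large enough, and it is the only step requiring the explicit form of the dissipation coefficient rather than mere symmetry and positivity; everything preceding it is algebraic rearrangement. Notably, this argument never uses constancy of $h\Bx$ or $h\By$, so the conclusion holds even when those jumps are nonzero.
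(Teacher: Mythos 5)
Your setup is sound and is, in fact, equivalent to the paper's own computation: the reduction of \eqref{eq:es_flux} for flat bottom to $\jump{\bV}^\mathrm{T}\widehat{\bF}_\xr-\jump{\psix}+\jump{\Phi}\mean{h\Bx}\leqslant0$, the decomposition of the Lax--Friedrichs flux through the EC flux, and the identity $\jump{\bV}^\mathrm{T}\jump{\bU}=g\jump{h}^2+\mean{h}\left(\jump{\vx}^2+\jump{\vy}^2+\jump{\Bx}^2+\jump{\By}^2\right)$ are all correct; the paper arrives at exactly this quadratic after its first round of simplification, and the ``central excess'' indeed reduces, via $\mean{ab}-\mean{a}\mean{b}=\frac14\jump{a}\jump{b}$, to the cubic form $\frac{g}{4}\jump{h}^2\jump{\vx}+\frac14\jump{h\vx}\left(\jump{\vx}^2+\jump{\vy}^2+\jump{\Bx}^2+\jump{\By}^2\right)-\frac12\jump{h\Bx}\left(\jump{\vx}\jump{\Bx}+\jump{\vy}\jump{\By}\right)$, which is precisely the paper's $\mathcal{A}+\mathcal{B}+\mathcal{C}$ before the dissipation is subtracted.

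However, the decisive step --- proving this cubic form is at most $\frac{\alpha_\xr}{2}$ times the quadratic --- is the entire content of the lemma, and you leave it as an expectation rather than a proof; this is a genuine gap. Your described mechanism is also slightly off: after expansion, the coefficients multiplying the jump-squares are not ``mean velocity or field components bounded by $\alpha_\xr$'' but the jumps $\jump{h\vx}$ and $\jump{h\Bx}$, which must be compared against $2\alpha_\xr\mean{h}$. What actually closes the argument --- and is exactly the paper's case analysis on the sign of $\jump{h\Bx}$ with the regrouping into $\jump{h(\vx\pm\Bx)}$ --- is: (i) $\jump{\vx}\leqslant2\alpha_\xr$ handles the $\jump{h}^2$ term; (ii) AM--GM gives $-\frac12\jump{h\Bx}\jump{\vx}\jump{\Bx}\leqslant\frac14\abs{\jump{h\Bx}}\left(\jump{\vx}^2+\jump{\Bx}^2\right)$ (similarly for the $\vy,\By$ pair), after which one needs $\jump{h\vx}+\abs{\jump{h\Bx}}\leqslant2\alpha_\xr\mean{h}$; (iii) this last inequality holds because $\jump{h\vx}\pm\jump{h\Bx}=h_R(\vx\pm\Bx)_R-h_L(\vx\pm\Bx)_L$ and $\abs{(\vx\pm\Bx)_{L,R}}\leqslant\abs{(\vx)_{L,R}}+\abs{(\Bx)_{L,R}}\leqslant\alpha_\xr$, using $\sqrt{gh+\Bx^2}\geqslant\abs{\Bx}$. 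So your strategy does succeed when executed, but steps (ii)--(iii) are the heart of the proof and are absent from your attempt; note in particular that without the $(\Bx)_m^2$ inside the square root in $\alpha_\xr$ the inequality would be false, so this is exactly where the specific form of the viscosity coefficient is used, not merely its ``largeness.''
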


\begin{proof}
	Substituting the Lax-Friedrichs flux into the inequality \eqref{eq:es_flux} and using identity \eqref{eq:jumpid} gives
	\begin{align}
	&\jump{\bV}^\mathrm{T}\cdot \widehat{\bF}^{\mbox{\scriptsize LF}}
-\jump{\psix}+\jump{\Phi}\mean{h\Bx} \nonumber
\\ =&g\mean{h\vx}\jump{h}-\frac{g\alpha}{2}\jump{h}^2+\frac{g}{2}\mean{h^2}\jump{\vx}-\frac{g}{2}\jump{h^2\vx} \nonumber\\
	&-\frac12\mean{h\vx}\jump{\vx^2+\vy^2+\Bx^2+\By^2}+\frac{\alpha}{4}\jump{h}\jump{\vx^2+\vy^2+\Bx^2+\By^2} \nonumber \\
	&+\mean{h\vx^2-h\Bx^2}\jump{\vx}+\mean{h\vx\vy-h\Bx\By}\jump{\vy}-\frac{\alpha}{2}\jump{h\vx}\jump{\vx}-\frac{\alpha}{2}\jump{h\vy}\jump{\vy} \nonumber\\
	&+\mean{h\vx\Bx-h\Bx\vx}\jump{\Bx}+\mean{h\vx\By-h\Bx\vy}\jump{\By}-\frac{\alpha}{2}\jump{h\Bx}\jump{\Bx}-\frac{\alpha}{2}\jump{h\By}\jump{\By}\nonumber\\
	&+\jump{\vx\Bx+\vy\By}\mean{h\Bx}\nonumber \\
	=&g\left( (\mean{h\vx}-\mean{h}\mean{\vx})\jump{h}-\frac{\alpha}{2}\jump{h}^2 \right) \nonumber\\
	&-\frac{\alpha}{2}\mean{h}\left(\jump{\vx}^2+\jump{\vy}^2+\jump{\Bx}^2+\jump{\By}^2 \right) \nonumber\\
	&+(\mean{h\vx^2}-\mean{h\vx}\mean{\vx})\jump{\vx}+(\mean{h\vx\vy}-\mean{h\vx}\mean{\vy})\jump{\vy} \nonumber\\
	&+(\mean{h\vx\Bx}-\mean{h\vx}\mean{\Bx})\jump{\Bx}+(\mean{h\vx\By}-\mean{h\vx}\mean{\By})\jump{\By} \nonumber\\
	&-(\mean{h\Bx^2}-\mean{h\Bx}\mean{\Bx})\jump{\vx}-(\mean{h\Bx\By}-\mean{h\Bx}\mean{\By})\jump{\vy} \nonumber\\
	&-(\mean{h\Bx\vx}-\mean{h\Bx}\mean{\vx})\jump{\Bx}-(\mean{h\Bx\vy}
-\mean{h\Bx}\mean{\vy})\jump{\By}.
\nonumber
	\end{align}
	It can be further simplified by using the identity $\mean{ab}-\mean{a}\mean{b}=\frac{1}{4}\jump{a}\jump{b}$ as follows
	\begin{align}
	&\jump{\bV}^\mathrm{T}\cdot
\widehat{\bF}^{\mbox{\scriptsize LF}}
-\jump{\psix}+\jump{\Phi}\mean{h\Bx} \nonumber\\
	=&\frac{g}{4}\jump{h}^2\left( \jump{\vx}^2-\alpha \right)-\frac{1}{2}\jump{h\Bx}\left(\jump{\vx}\jump{\Bx}+\jump{\vy}\jump{\By}\right) \nonumber\\
	&+\left(\frac14\jump{h\vx}-\frac{\alpha}{2}\mean{h}\right)\left(\jump{\vx}^2+\jump{\vy}^2+\jump{\Bx}^2+\jump{\By}^2 \right) \nonumber\\
	&+(\mean{h\vx^2}-\mean{h\vx}\mean{\vx})\jump{\vx}+(\mean{h\vx\vy}-\mean{h\vx}\mean{\vy})\jump{\vy} \nonumber\\
	\triangleq&\mathcal{A}+\mathcal{B}+\mathcal{C},
\nonumber
	\end{align}
	where
	\begin{align*}	
	\mathcal{A}&=\frac{g}{4}\jump{h}^2\left( \jump{\vx}-2\alpha \right), \\
	\mathcal{B}&=\frac14\left(\jump{\vx}^2+\jump{\Bx}^2\right)\left(\jump{h\vx}-2\alpha\mean{h}\right)-\frac12\jump{h\Bx}\jump{\vx}\jump{\Bx},\\
	\mathcal{C}&=\frac14\left(\jump{\vy}^2+\jump{\By}^2\right)\left(\jump{h\vx}-2\alpha\mean{h}\right)-\frac12\jump{h\Bx}\jump{\vy}\jump{\By}.
	\end{align*}
	Since $\alpha=\max\big\{\abs{(\vx)_L}+\sqrt{gh_L+(\Bx)_L^2}$, $\abs{(\vx)_R}+\sqrt{gh_R+(\Bx)_R^2}\big\}$,
	it is easy to obtain
	\begin{equation*}
	\jump{\vx}\leqslant2\alpha, ~(\vx\pm\Bx)_{L,R}\leqslant\alpha,
	\end{equation*}
	then $\mathcal{A}\leqslant0$.

	If $\jump{h\Bx}\geqslant0$, then $\mathcal{B}\leqslant0$, because	$$\jump{h(\vx+\Bx)}-2\alpha\mean{h}=h_R\left[(\vx+\Bx)_R
-\alpha\right]-h_L\left[(\vx+\Bx)_L+\alpha\right]\leqslant0,$$
	and
	\begin{align*} \mathcal{B}=\frac14\left(\jump{\vx}^2
+\jump{\Bx}^2\right)\left(\jump{h(\vx+\Bx)}-2\alpha\mean{h}\right)
-\left(\jump{\vx}+\jump{\Bx}\right)^2\jump{h\Bx}; 
	\end{align*}
 otherwise, 
  it still holds that $\mathcal{B}\leqslant0$, because $$\jump{h(\vx-\Bx)}-2\alpha\mean{h}=h_R\left[(\vx-\Bx)_R-\alpha\right]
 -h_L\left[(\vx-\Bx)_L+\alpha\right]\leqslant0,$$
	and 	
\begin{align*}
	\mathcal{B} &=\frac14\left(\jump{\vx}^2+\jump{\Bx}^2\right)\left(\jump{h(\vx-\Bx)}
-2\alpha\mean{h}\right)+\left(\jump{\vx}-\jump{\Bx}\right)^2\jump{h\Bx}.
	\end{align*}
Similarly, $\mathcal{C}\leqslant0$. Therefore the Lax-Friedrichs flux is ES.
\end{proof}

Based on those discussions,  the high-order positivity-preserving ES schemes  can be constructed by using the Lax-Friedrichs flux and the  positivity-preserving limiter \cite{Hu2013}.
The limited numerical flux $\widehat{\bF}_\xr^{k\mbox{\scriptsize th,PP}}$ is given by
\begin{equation*}
  \widehat{\bF}_\xr^{k\mbox{\scriptsize th,PP}}=\theta_\xr\widehat{\bF}_\xr^{k\mbox{\scriptsize th}}+
  (1-\theta_\xr)\widehat{\bF}_\xr^{\mbox{\scriptsize LF}},
\end{equation*}
where $\theta_\xr=\min\{\theta_\xr^+, \theta_\xr^-\}\in[0,1]$ is the scaling factor corresponding to the two neighboring grid points,
which share the same flux $\widehat{\bF}^{k\mbox{\scriptsize th}}_\xr$, and
\begin{equation*}
  \theta_\xr^\pm=\begin{cases}
    \left(h_{\xr\mp\frac12}^{\pm,\mbox{\scriptsize LF}}-\varepsilon\right) /
     \left(h_{\xr\mp\frac12}^{\pm,\mbox{\scriptsize LF}}-h_{\xr\mp\frac12}^{\pm,k\mbox{\scriptsize th}}\right),
     \quad & \text{if}~h_{\xr\mp\frac12}^{\pm,k\mbox{\scriptsize th}}<\varepsilon,\\
     1, \quad & \text{otherwise}.
  \end{cases}
\end{equation*}
	It is worth noting that the discretization of the source terms should also be replaced by the corresponding
	convex combinations as follows
	\begin{align*}
	&\widetilde{(h\Bx)}^{2p\mbox{\scriptsize th,PP}}_\xr=\theta_\xr\widetilde{(h\Bx)}^{2p\mbox{\scriptsize th}}_\xr+
	(1-\theta_\xr)\mean{h\Bx}_\xr.
	\end{align*}
It is easy to verify that the  water height updated by the high-order positivity-preserving ES schemes  satisfies
\begin{align*}
  h^{n+1}_i=&\dfrac12\left(h^n_i-\frac{2\Delta t}{\Delta x}h(\widehat{\bF}_\xr^{k\mbox{\scriptsize th,PP}})\right)
  +\dfrac12\left(h^n_i+\frac{2\Delta t}{\Delta x}h(\widehat{\bF}_\xl^{k\mbox{\scriptsize th,PP}})\right)\\
  =&\dfrac12\left[\theta_\xr h_{i}^{+,k\mbox{\scriptsize th}}
  +(1-\theta_\xr)h_{i}^{+,\mbox{\scriptsize LF}}\right]
  +\dfrac12\left[\theta_\xl h_{i}^{-,k\mbox{\scriptsize th}}
  +(1-\theta_\xl)h_{i}^{-,\mbox{\scriptsize LF}}\right]
    >\varepsilon,
\end{align*}
and the limited flux $\widehat{\bF}_\xr^{k\mbox{\scriptsize th,PP}}$ is consistent and ES since it is a convex combination of the high-order ES flux
and the ES Lax-Friedrichs flux, and   does not destroy the high-order accuracy
\begin{equation*}
  \norm{\widehat{\bF}_\xr^{k\mbox{\scriptsize th,PP}}-\widehat{\bF}_\xr^{k\mbox{\scriptsize th}}}
  \leqslant (1-\theta_\xr)\norm{\widehat{\bF}_\xr^{\mbox{\scriptsize LF}}-\widehat{\bF}_\xr^{k\mbox{\scriptsize th}}},
\end{equation*}
with $1-\theta_\xr=\mathcal{O}(\Delta x^{k})$ \cite{Hu2013}.

\section{Two-dimensional schemes}\label{section:MultiD}
This section extends the 1D high-order EC and ES schemes developed in Section \ref{section:OneD}
to the 2D SWMHD system \eqref{eq:SWMHD}.
For convenience, 
the notation $(x_1,x_2)$ is
replaced with $(x,y)$.  Our attention is limited to
a uniform Cartesian mesh
$\{(x_i,y_j),~i=1,\cdots,N_x,~j=1,\cdots,N_y\}$
with the spatial step sizes $\Delta x,\Delta y$ so that
the extension of the 1D  schemes to \eqref{eq:SWMHD}
can be done  by approximating \eqref{eq:symm} in a dimension by dimension fashion.
To avoid repetition, the detailed extension is not described below.

 At each grid point $(x_i,y_j),~i=1,\cdots,N_x,~j=1,\cdots,N_y$,
the 2D SWMHD system \eqref{eq:SWMHD} 
can be approximated by the following second-order accurate well-balanced semi-discrete EC scheme
\begin{align}\label{eq:2DECsemi}
\dfrac{\dd}{\dd t}\bU_{i,j}&+\dfrac{1}{\Delta x}\left(\widetilde{\bF}_{1,i+\frac12,j}-\widetilde{\bF}_{1,i-\frac12,j}\right)
+\dfrac{1}{\Delta y}\left(\widetilde{\bF}_{2,i,j+\frac12}-\widetilde{\bF}_{2,i,j-\frac12}\right)= \nonumber \\
&- \Psi_{i,j}^\mathrm{T}\dfrac{\mean{h\Bx}_{\xr,j}-\mean{h\Bx}_{\xl,j}}{\Delta x}
- \Psi_{i,j}^\mathrm{T}\dfrac{\mean{h\By}_{i,\yr}-\mean{h\By}_{i,\yl}}{\Delta y} \nonumber\\
&-(\bm{G}_1)_{i,j}^\mathrm{T}\dfrac{\mean{b}_{\xr,j}-\mean{b}_{\xl,j}}{\Delta x}
-(\bm{G}_2)_{i,j}^\mathrm{T}\dfrac{\mean{b}_{i,\yr}-\mean{b}_{i,\yl}}{\Delta y},
\end{align}
where $\bm{G}_2=(0,~0,~gh,~0,~0)^\mathrm{T}$,
and $\widetilde{\bF}_{1,i\pm\frac12,j}$ and $\widetilde{\bF}_{2,i,j\pm\frac12}$ are
the $x$- and $y$-directional EC fluxes, respectively.

Similarly, using the EC scheme \eqref{eq:2DECsemi} as building block
can give a $2p$th-order well-balanced semi-discrete EC scheme for
the 2D SWMHD system \eqref{eq:SWMHD} 
 as follows
\begin{align}\label{eq:2DECsemi_HO}
\dfrac{\dd}{\dd t}\bU_{i,j}&+\dfrac{1}{\Delta x}\left(\widetilde{\bF}^{2p\mbox{\scriptsize th}}_{1,i+\frac12,j}-\widetilde{\bF}^{2p\mbox{\scriptsize th}}_{1,i-\frac12,j}\right)
+\dfrac{1}{\Delta y}\left(\widetilde{\bF}^{2p\mbox{\scriptsize th}}_{2,i,j+\frac12}-\widetilde{\bF}^{2p\mbox{\scriptsize th}}_{2,i,j-\frac12}\right)= \nonumber \\
&-\dfrac{ \Psi_{i,j}^\mathrm{T}}{\Delta x}\left({\widetilde{(h\Bx)}^{2p\mbox{\scriptsize th}}_{\xr,j}-\widetilde{(h\Bx)}^{2p\mbox{\scriptsize th}}_{\xl,j}}\right)
-\dfrac{\Psi_{i,j}^\mathrm{T}}{\Delta y}\left({\widetilde{(h\By)}^{2p\mbox{\scriptsize th}}_{i,\yr}-\widetilde{(h\By)}^{2p\mbox{\scriptsize th}}_{i,\yl}}\right) \nonumber\\
&-\dfrac{(\bm{G}_1)_{i,j}^\mathrm{T}}{\Delta x}\left({\widetilde{(b)}^{2p\mbox{\scriptsize th}}_{\xr,j}-\widetilde{(b)}^{2p\mbox{\scriptsize th}}_{\xl,j}}\right)
-\dfrac{(\bm{G}_2)_{i,j}^\mathrm{T}}{\Delta y}\left({\widetilde{(b)}^{2p\mbox{\scriptsize th}}_{i,\yr}-\widetilde{(b)}^{2p\mbox{\scriptsize th}}_{i,\yl}}\right),
\end{align}
where
\begin{equation*}
\begin{aligned}
&\widetilde{\bF}^{2p\mbox{\scriptsize th}}_{1,i+\frac12,j}=\sum_{r=1}^p\alpha_r^p\sum_{s=0}^{r-1}\tbF_1(\bU_{i-s,j},\bU_{i-s+r,j}), \\
&\widetilde{\bF}^{2p\mbox{\scriptsize th}}_{2,i,j+\frac12}=\sum_{r=1}^p\alpha_r^p\sum_{s=0}^{r-1}\tbF_2(\bU_{i,j-s},\bU_{i,j-s+r}), \\
&(\widetilde{{h\Bx}})^{2p\mbox{\scriptsize th}}_{i+\frac12,j}=\dfrac12\sum_{r=1}^p\alpha_r^p\sum_{s=0}^{r-1}\left[(h\Bx)_{i-s,j}+(h\Bx)_{i-s+r,j}\right],\\
&(\widetilde{{h\By}})^{2p\mbox{\scriptsize th}}_{i,j+\frac12}=\dfrac12\sum_{r=1}^p\alpha_r^p\sum_{s=0}^{r-1}\left[(h\By)_{i,j-s}+(h\By)_{i,j-s+r}\right],\\
&(\widetilde{b})^{2p\mbox{\scriptsize th}}_{i+\frac12,j}=\dfrac12\sum_{r=1}^p\alpha_r^p\sum_{s=0}^{r-1}\left(b_{i-s,j}+b_{i-s+r,j}\right), \\
&(\widetilde{b})^{2p\mbox{\scriptsize th}}_{i,j+\frac12}=\dfrac12\sum_{r=1}^p\alpha_r^p\sum_{s=0}^{r-1}\left(b_{i,j-s}+b_{i,j-s+r}\right).
\end{aligned}
\end{equation*}
Then adding a suitable dissipation term to \eqref{eq:2DECsemi_HO}
gives  a $k$th-order well-balanced semi-discrete ES scheme for the 2D SWMHD system
\eqref{eq:SWMHD} 
as follows
\begin{align}\label{eq:2DESsemi_HO}
\dfrac{\dd}{\dd t}\bU_{i,j}&+\dfrac{1}{\Delta x}\left(\widehat{\bF}^{k\mbox{\scriptsize th}}_{1,i+\frac12,j}-\widehat{\bF}^{k\mbox{\scriptsize th}}_{1,i-\frac12,j}\right)
+\dfrac{1}{\Delta y}\left(\widehat{\bF}^{k\mbox{\scriptsize th}}_{2,i,j+\frac12}-\widehat{\bF}^{k\mbox{\scriptsize th}}_{2,i,j-\frac12}\right)= \nonumber \\
&-\dfrac{ \Psi_{i,j}^\mathrm{T}}{\Delta x}\left({\widetilde{(h\Bx)}^{2p\mbox{\scriptsize th}}_{\xr,j}-\widetilde{(h\Bx)}^{2p\mbox{\scriptsize th}}_{\xl,j}}\right)
-\dfrac{ \Psi_{i,j}^\mathrm{T}}{\Delta y}\left({\widetilde{(h\By)}^{2p\mbox{\scriptsize th}}_{i,\yr}-\widetilde{(h\By)}^{2p\mbox{\scriptsize th}}_{i,\yl}}\right) \nonumber\\
&-\dfrac{(\bm{G}_1)_{i,j}^\mathrm{T}}{\Delta x}\left({\widetilde{(b)}^{2p\mbox{\scriptsize th}}_{\xr,j}-\widetilde{(b)}^{2p\mbox{\scriptsize th}}_{\xl,j}}\right)
-\dfrac{(\bm{G}_2)_{i,j}^\mathrm{T}}{\Delta y}\left({\widetilde{(b)}^{2p\mbox{\scriptsize th}}_{i,\yr}-\widetilde{(b)}^{2p\mbox{\scriptsize th}}_{i,\yl}}\right),
\end{align}
where  $p=k/2$ for even $k$ and $p=(k+1)/2$ for odd $k$,
\begin{equation*}
\begin{aligned}
\widehat{\bm{F}}^{k\mbox{\scriptsize th}}_{1,i+\frac12,j}=\widetilde{\bF}^{2p\mbox{\scriptsize th}}_{1,i+\frac12,j}
-\dfrac12\alpha_{i+\frac12,j}\bm{S}_{i+\frac12,j}
\bm{R}_{i+\frac12,j}\jumpangle{\bw}_{i+\frac12,j},
\\
\widehat{\bm{F}}^{k\mbox{\scriptsize th}}_{2,i,j+\frac12}=\widetilde{\bF}^{2p\mbox{\scriptsize th}}_{2,i,j+\frac12}
-\dfrac12\alpha_{i,\yr}\bm{S}_{i,\yr}\bm{R}_{i,\yr}\jumpangle{\bw}_{i,\yr},
\end{aligned}
\end{equation*}
 the jumps $\jumpangle{\bw}_{\xr,j},\jumpangle{\bw}_{i,\yr}$ are respectively obtained by using the WENO reconstruction in the $x$- and $y$-directions, and the viscosities $\alpha_{i+\frac12,j}$ and $\alpha_{i,j+\frac12}$ are respectively chosen in $x$- and $y$-directions.

For the time discretization, the third-order Runge-Kutta scheme \eqref{eq:rk3} is used.
The analysis of the EC, ES, and well-balanced properties  of the above 2D EC and ES schemes is similar to the 1D case, so that it is omitted here.
Moreover, the ES property of the Lax-Friedrichs flux can also be used to develop the 2D positivity-preserving
ES schemes by using the positivity-preserving flux limiter.
%
%

\section{Numerical results}\label{section:Num}
This section conducts some numerical experiments to validate the performance of our EC and ES schemes
for the SWMHD equations \eqref{eq:SWMHD} and its $x$-split system.
Unless otherwise stated, all computations take the CFL number $\mu$ as $0.5$,
and the 5th-order schemes  with the fifth-order WENO reconstruction in \cite{Borges2008}.
For the accuracy tests, the time stepsize $\Delta t$ is taken as
$\mu {\Delta x}^{6/3}$ (resp. $\mu {\Delta x}^{5/3}$)
for the $6$th order EC schemes (resp. the $5$th order
ES schemes) to make the spatial error dominant.

\subsection{One-dimensional case}
\begin{example}[Accuracy test]\label{ex:acc1D}\rm
  This example is used to verify the accuracy.
  The computational domain is $[0,1]$ with periodic boundary conditions, and $g=1$.
   The constructed exact solution is given as follows
  \begin{align*}
    &h(x, t)=1, \ \vx(x, t)=0,\ \vy(x, t)=\sin(2\pi (x+t)),
    \ \Bx(x, t)=1, \ \By(x, t)=\vy(x, t).
  \end{align*}
\end{example}
Table \ref{tab:acc1D} lists the errors and the orders of convergence in $\vy$ at $t=1$
obtained by using our EC   and ES schemes.
It is seen that these schemes get the sixth-order and the fifth-order
accuracy as expected.

\begin{table}[!htb]
  \centering
  \begin{tabular}{r|cc|cc|cc|cc} \hline
  \multirow{2}{*}{$N_x$} & \multicolumn{4}{c|}{EC scheme} & \multicolumn{4}{c}{ES scheme} \\ \cline{2-9}
                         & $\ell^1$ error & order & $\ell^\infty$ error & order & $\ell^1$ error & order & $\ell^\infty$ error & order \\ \hline
 10 & 1.575e-04 &  -   & 2.433e-04 &  -   & 1.126e-03 &  -   & 1.605e-03 &  -   \\
 20 & 2.706e-06 & 5.86 & 4.181e-06 & 5.86 & 3.015e-05 & 5.22 & 5.303e-05 & 4.92 \\
 40 & 4.276e-08 & 5.98 & 6.690e-08 & 5.97 & 9.048e-07 & 5.06 & 1.486e-06 & 5.16 \\
 80 & 6.700e-10 & 6.00 & 1.051e-09 & 5.99 & 2.830e-08 & 5.00 & 4.492e-08 & 5.05 \\
160 & 1.050e-11 & 6.00 & 1.650e-11 & 5.99 & 8.852e-10 & 5.00 & 1.393e-09 & 5.01 \\
 \hline
  \end{tabular}
  \caption{Example \ref{ex:acc1D}: Errors and orders of convergence in $\vy$ at $t=1$.}
  \label{tab:acc1D}
\end{table}

\begin{example}[Well-balanced test \cite{Zia2014}]\label{ex:1DWB}\rm
	It is used to verify the well-balanced property of our EC and ES schemes. 
	The bottom topography is taken as a smooth function
	\begin{equation}\label{eq:1Dsmooth_b}
	b(x)=0.2e^{-(x+1)^2/2}+0.3e^{-(x-1.5)^2},
	\end{equation}
	or a discontinuous function
	\begin{equation}\label{eq:1Ddiscontinuous_b}
	b(x)=0.5\chi_{[-4,4]},
	\end{equation}
and then the initial data are specified as $h(x)=1-b(x)$,
$\vx=0$, and $\vec B=0$.
	The computational domain is $[-10,10]$, and the problem is numerically solved until $t=10$ with $N_x=40$ and $g=1$.
\end{example}
The surface level $h+b$ and the bottom $b$ are shown in Figure \ref{fig:1DWB},
and the errors in $h$ and $\vx$ are given in Table \ref{tab:1DWB}.
It can be seen that the errors are at the level of round-off errors for the   double precision, and the well-balanced property is verified.

\begin{figure}[ht!]
	\begin{subfigure}[b]{0.5\textwidth}
		\centering
		\includegraphics[width=1.0\textwidth]{./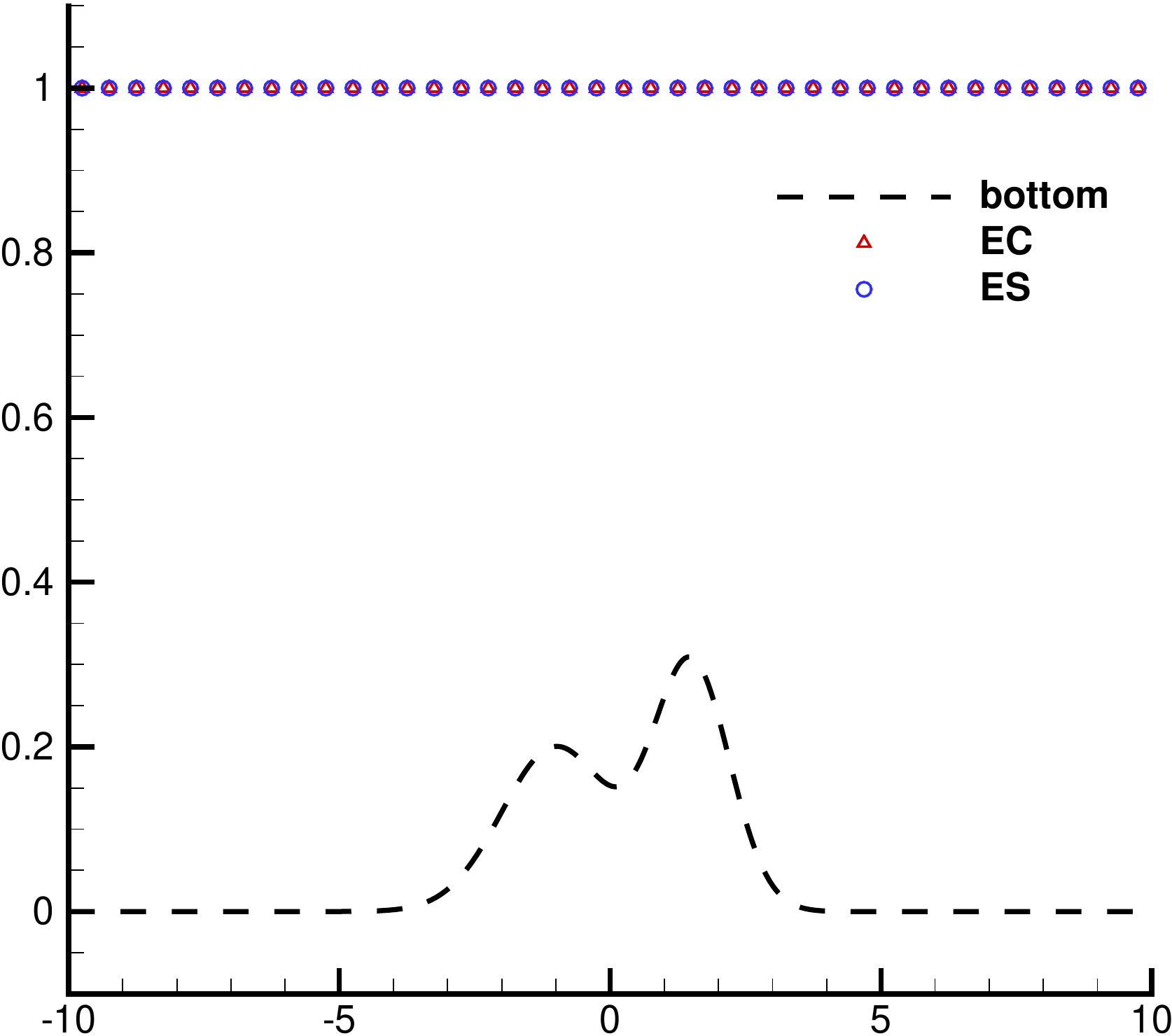}
		\caption{Bottom topography \eqref{eq:1Dsmooth_b}}
	\end{subfigure}
	\begin{subfigure}[b]{0.5\textwidth}
		\centering
		\includegraphics[width=1.0\textwidth]{./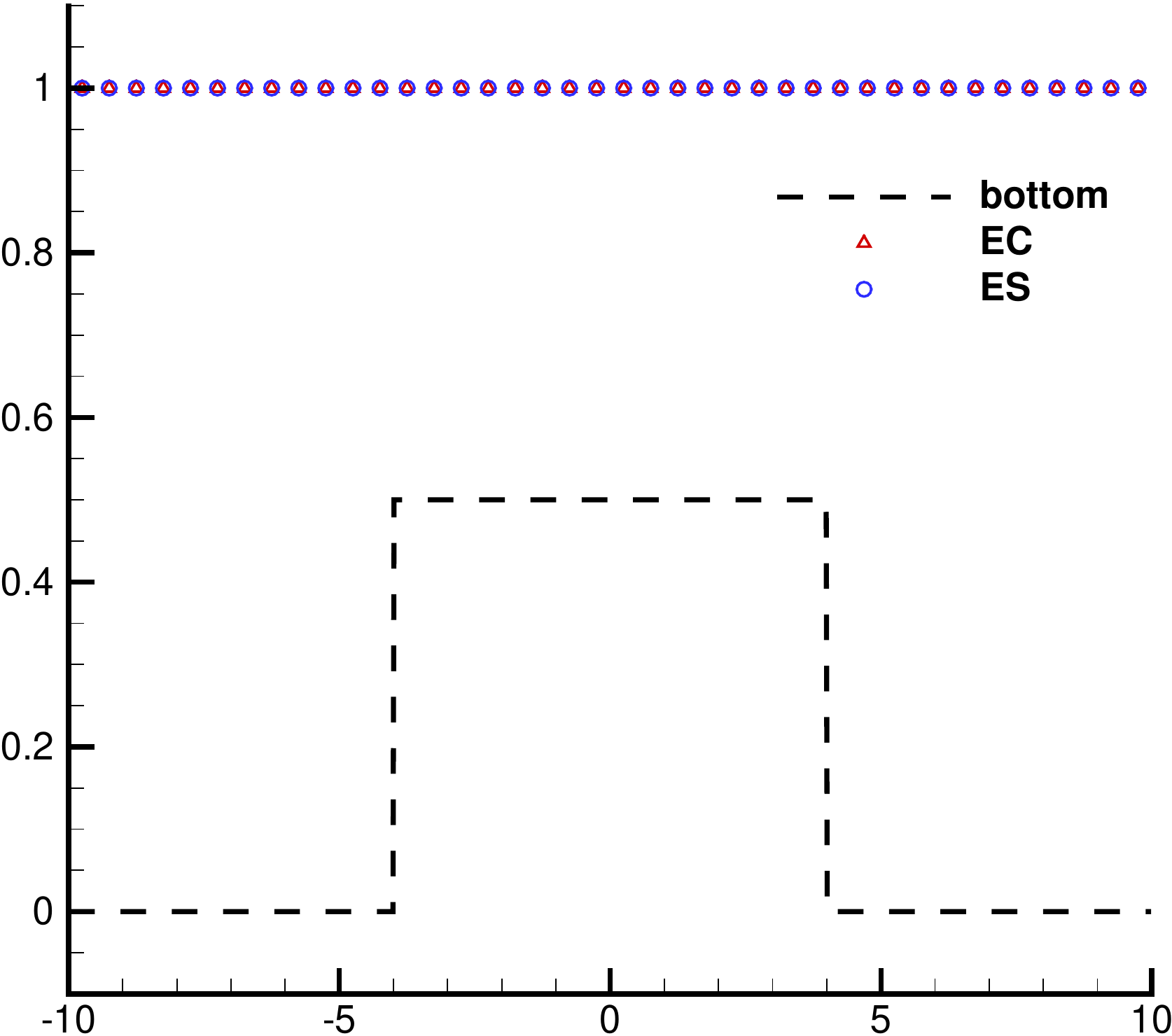}
		\caption{Bottom topography \eqref{eq:1Ddiscontinuous_b}}
	\end{subfigure}
	\caption{Example \ref{ex:1DWB}: The symbols ``$\triangle$'' and ``$\circ$'' denote
		the numerical solutions at $t=10$ obtained by using the EC and the ES schemes with $N_x=40$, respectively.  }
	\label{fig:1DWB}
\end{figure}

\begin{table}[!ht]
	\centering
	
	\begin{tabular}{rc|cc|cc} \hline
		\multirow{2}{*}{} &  & \multicolumn{2}{c|}{EC scheme} & \multicolumn{2}{c}{ES scheme} \\ \cline{3-6}
	                      &  & $\ell^1$ error & $\ell^\infty$ error & $\ell^1$ error & $\ell^\infty$ error \\ \hline
		\multirow{2}{*}{\eqref{eq:1Dsmooth_b}} & $h$   &  9.825e-16 & 2.554e-15 & 1.035e-15 & 2.554e-15 \\
	                             	           & $\vx$ &  5.463e-16 & 1.636e-15 & 5.902e-16 & 1.638e-15 \\ \hline
 \multirow{2}{*}{\eqref{eq:1Ddiscontinuous_b}} & $h$   &  2.484e-16 & 1.776e-15 & 2.262e-16 & 8.882e-16 \\
	                                           & $\vx$ &  2.445e-16 & 2.046e-15 & 2.309e-16 & 1.617e-15 \\
		\hline
	\end{tabular}
	\caption{Example \ref{ex:1DWB}: Errors in $h$ and $\vx$ at $t=10$ for the bottom topography \eqref{eq:1Dsmooth_b} and \eqref{eq:1Ddiscontinuous_b}.}
	\label{tab:1DWB}
\end{table}

\begin{example}[Steady state problem with wavy bottom \cite{Zia2014}]\label{ex:1Dsteady}\rm
	This example is adapted from the problem in \cite{Xu2002} and used to check the dissipative and dispersive errors in the ES scheme.
	The computational domain and the bottom topography are the same as the last problem,  $g=9.812$, and the initial data are
	\begin{equation*}
	(h,\vx,\vy,\Bx,\By)=\begin{cases}
	(1,~1,~0,~0.05,~0),        & x<0, \\
	(1,~1,~0,~0.1,~0.1),       & x>0.
	\end{cases}
	\end{equation*}
\end{example}

The results obtained by using the ES scheme with $N_x=50,100$ are shown in Figure \ref{fig:1Dsteady},
and the reference solutions are obtained by using the ES scheme with $N_x=1000$.
We can see that the accurate solutions can be obtained even with the coarse mesh $N_x=50$.

\begin{figure}[ht!]
	\begin{subfigure}[b]{0.5\textwidth}
		\centering
		\includegraphics[width=1.0\textwidth]{./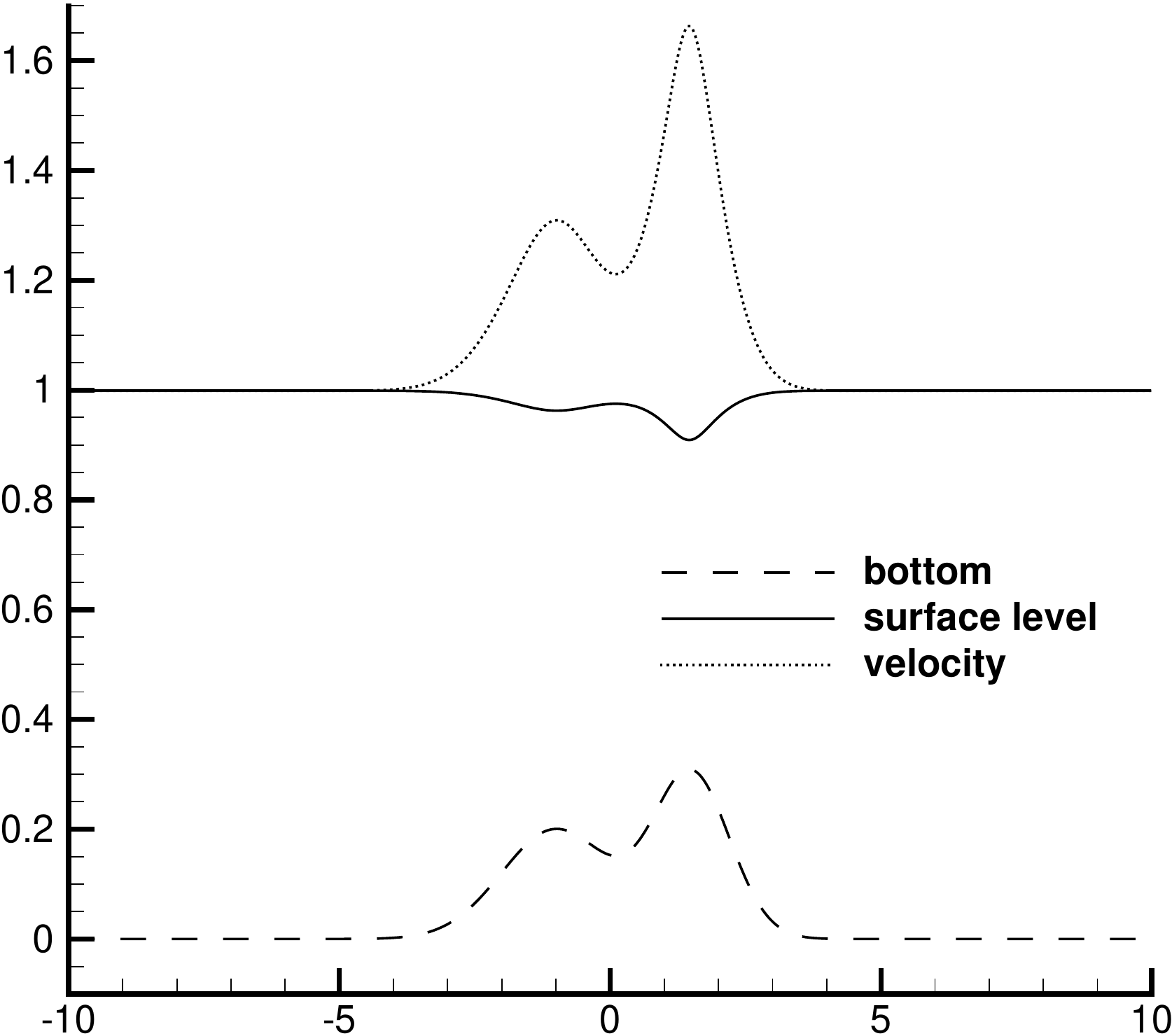}
	\end{subfigure}
	\begin{subfigure}[b]{0.5\textwidth}
		\centering
		\includegraphics[width=1.0\textwidth]{./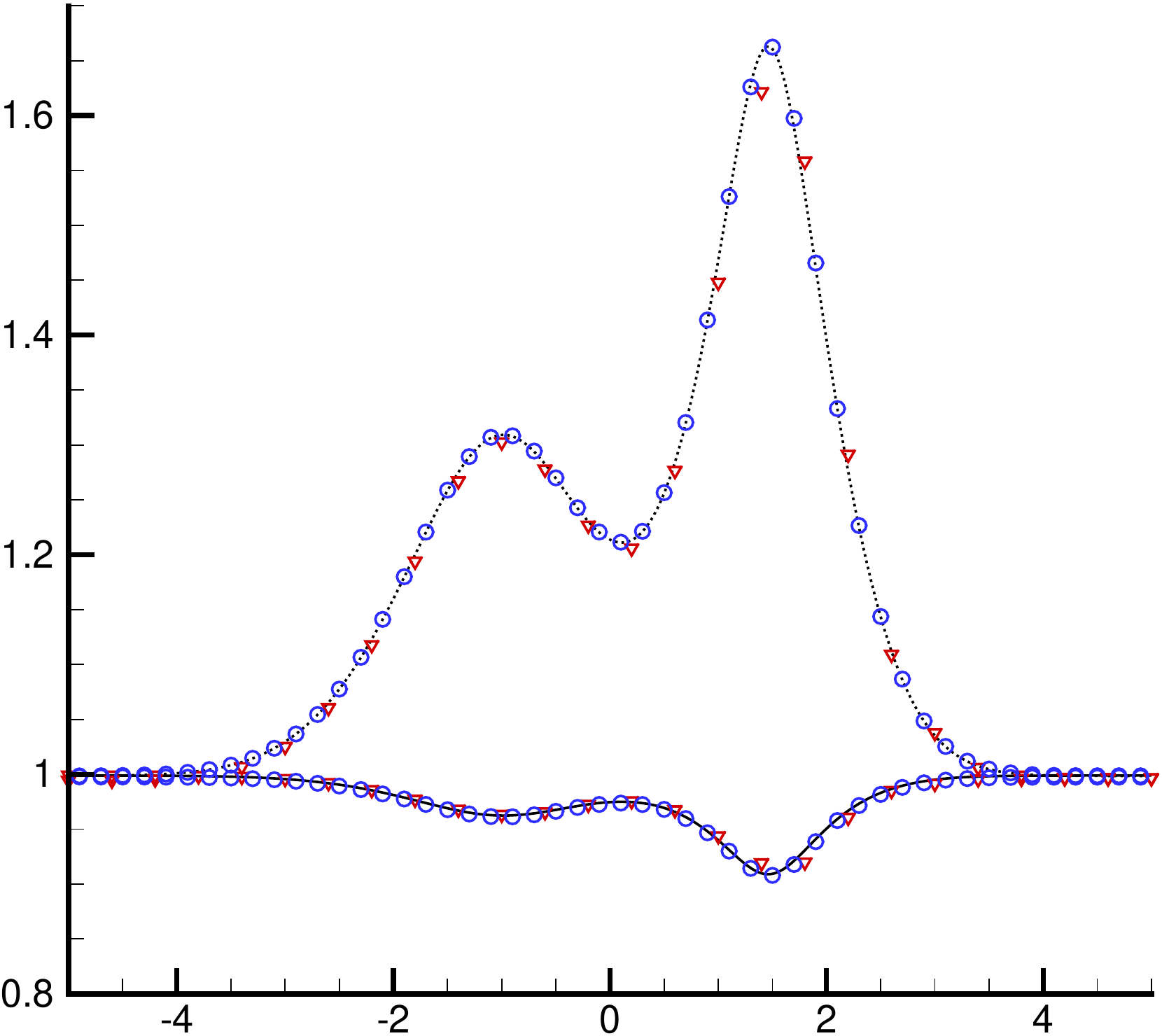}
	\end{subfigure}
	\caption{Example \ref{ex:1Dsteady}: Left: The bottom topography and the reference solutions obtained by using
		the ES scheme with $N_x=1000$. Right:
		 The enlarged view of the numerical solutions  obtained by using the ES schemes with $N_x=50$ (``$\triangledown$'') and $N_x=100$ (``$\circ$''), respectively.  }
	\label{fig:1Dsteady}
\end{figure}

\begin{example}[Small perturbation of a steady state]\label{ex:1Dperturb}\rm
	To examine the ability of capturing small perturbation of a steady state, consider
	two quasi-stationary problems.
		The first problem is considered in \cite{LeVeque1998,Xing2005}.
	The bottom topography consists of one hump
	\begin{equation*}
	  b(x)=\begin{cases}
	  0.25(\cos(10\pi(x-1.5))+1), \quad & \text{if} ~~1.4<x<1.6,\\
	  0,         \quad & \text{otherwise},
	  \end{cases}
	\end{equation*}
	and the initial data are
	\begin{equation*}
	h=\begin{cases}
	1-b(x)+\epsilon,      &\quad \text{if} ~~1.1<x<1.2, \\
	1-b(x),               &\quad \text{otherwise},
	\end{cases}
	\end{equation*}
	with zero velocity and zero magnetic field.
The second quasi-stationary problem takes into account the magnetic field such that $h\Bx=1$.
Those problems are solved until $t=0.2$ with the computational domain $[0,2]$,  $g=9.812$, and $\epsilon=0.2, 0.001$.
\end{example}

The results with zero magnetic field are shown in Figure \ref{fig:1DSW_perturb},
while those with non-zero magnetic field are shown in Figure \ref{fig:1DSWMHD_perturb}.
The solutions obtained by using the ES scheme with $N_x=200$ are
 compared to the reference solutions
obtained by using the ES scheme with a fine mesh of $N_x=3000$.
It can be seen that the structures in the solutions are well captured with no spurious oscillations, and
the results with zero magnetic field are well comparable to those in \cite{Xing2005}.

\begin{figure}[ht!]
	\begin{subfigure}[b]{0.24\textwidth}
		\centering
		\includegraphics[width=1.0\textwidth]{./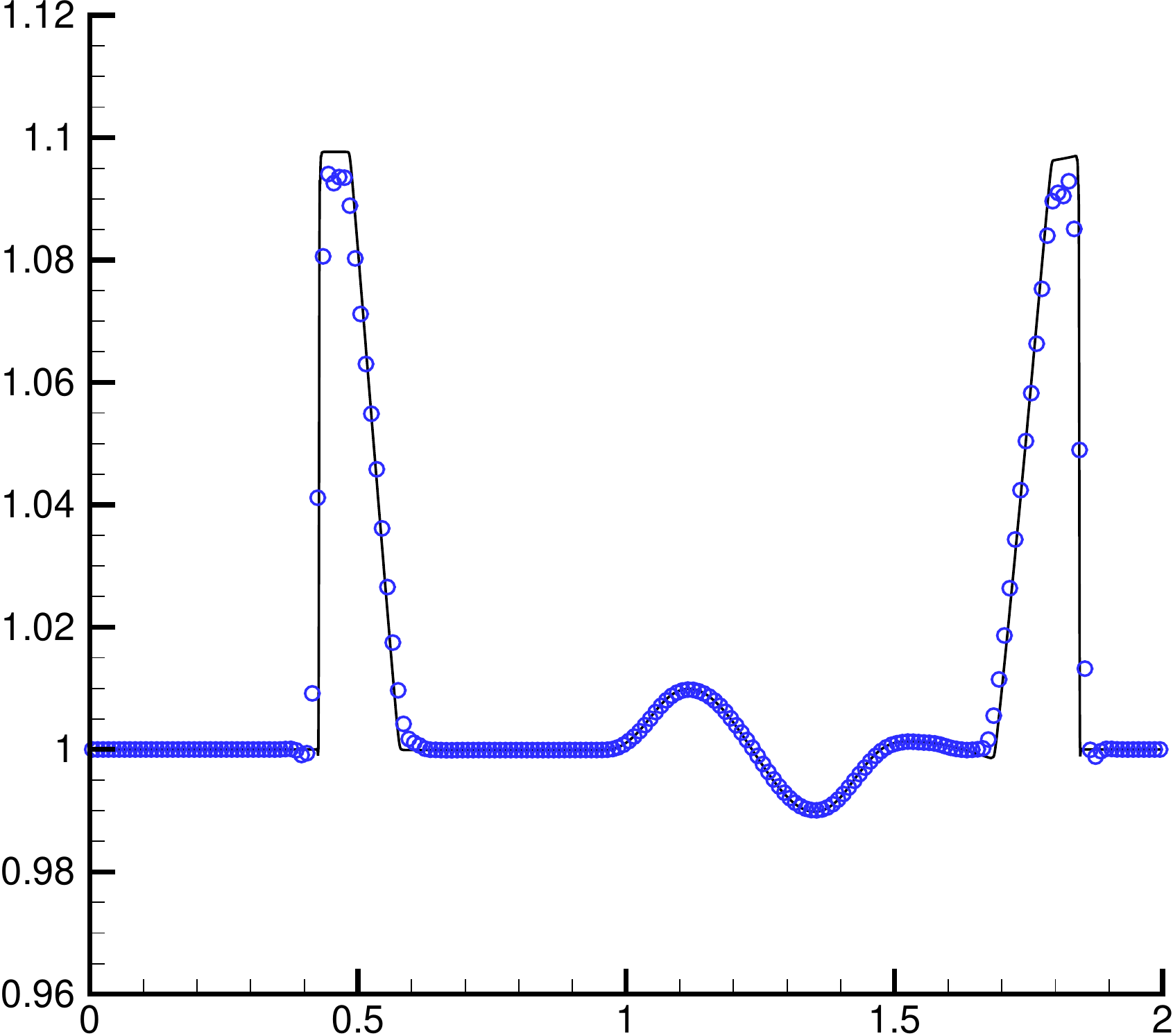}
		\caption{$h+b$ for $\epsilon=0.2$}
	\end{subfigure}
	\begin{subfigure}[b]{0.24\textwidth}
		\centering
		\includegraphics[width=1.0\textwidth]{./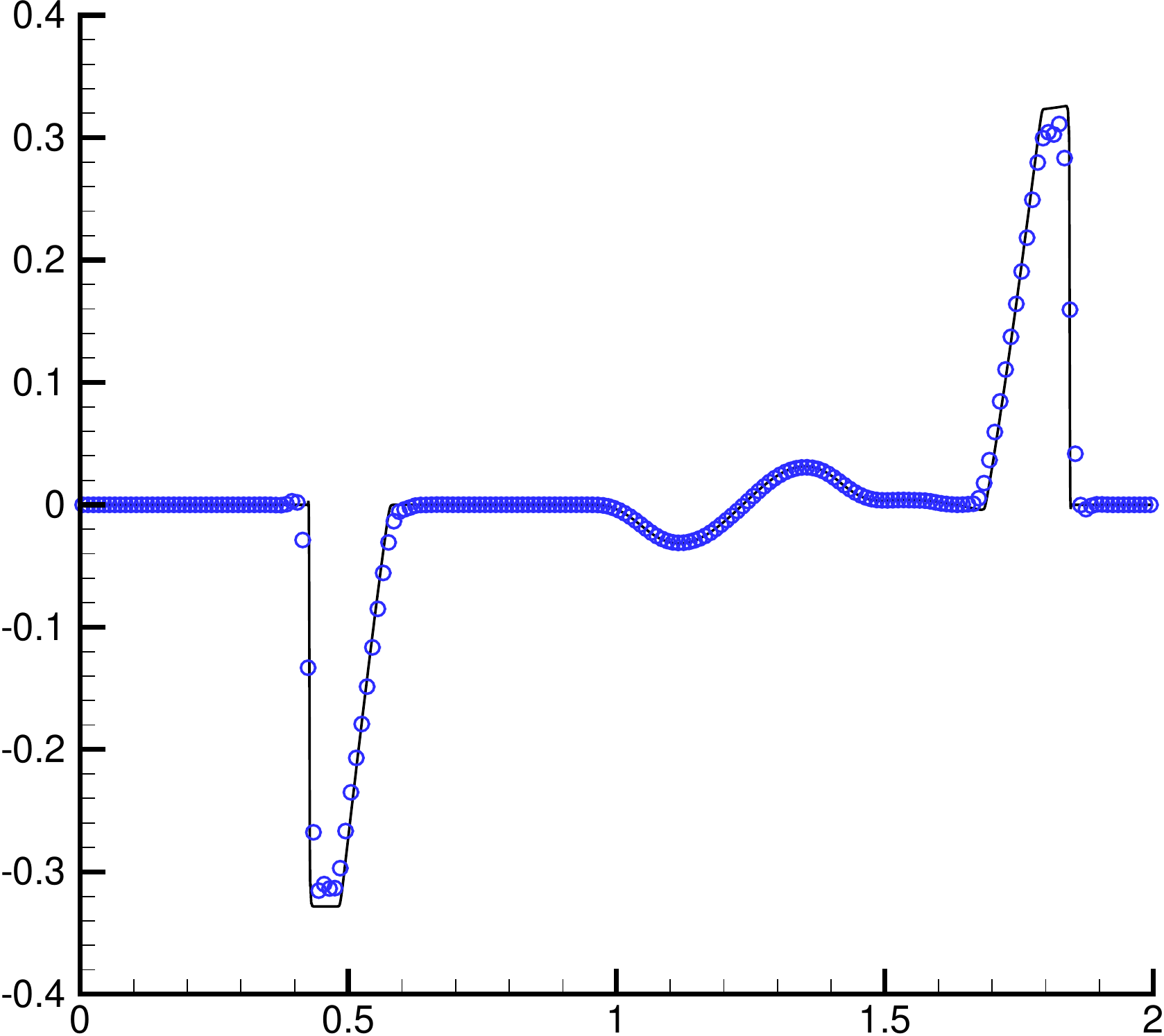}
		\caption{$h\vx$ for $\epsilon=0.2$}
	\end{subfigure}
	\begin{subfigure}[b]{0.24\textwidth}
		\centering
		\includegraphics[width=1.0\textwidth]{./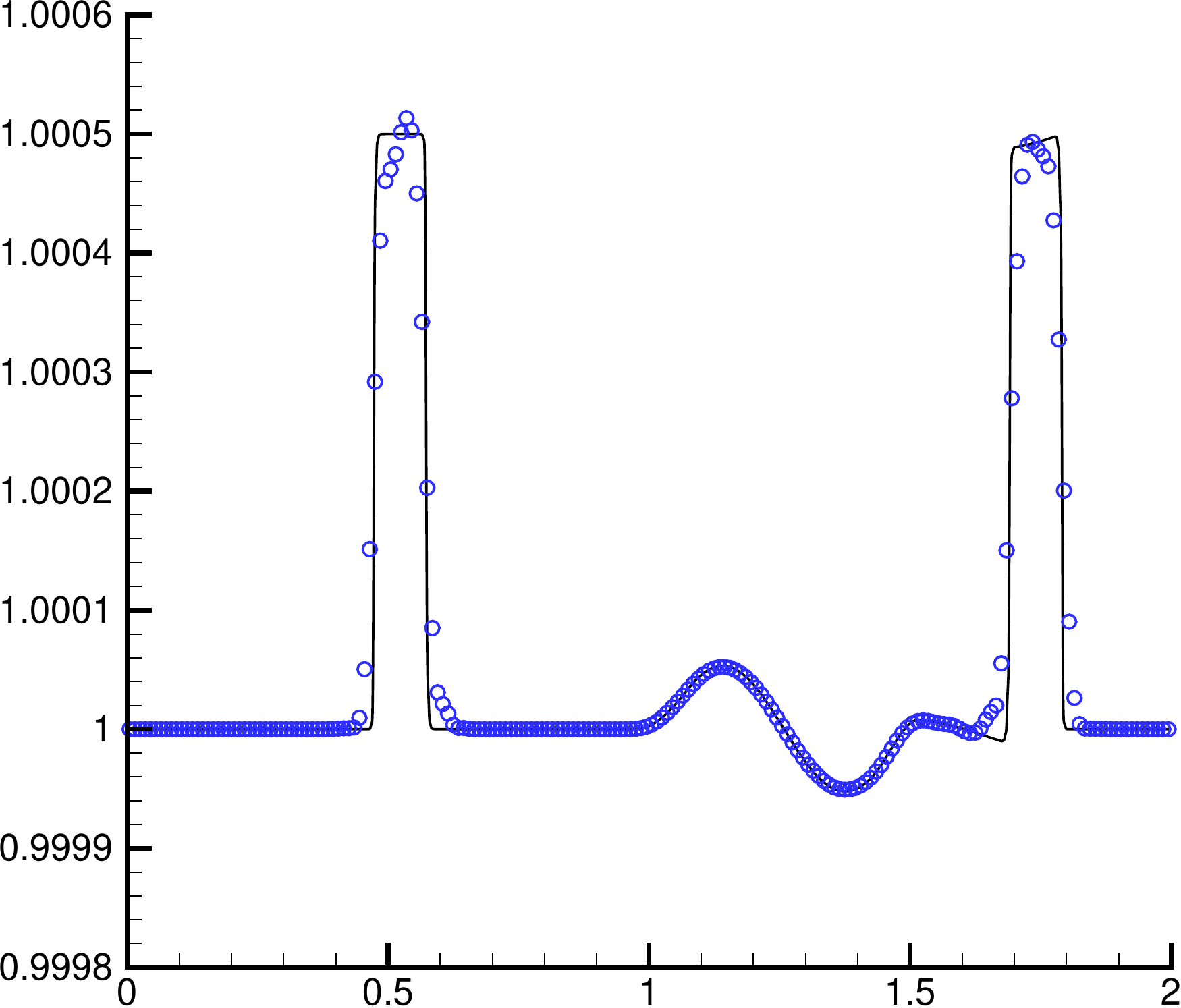}
		\caption{$h+b$ for $\epsilon=0.001$}
	\end{subfigure}
	\begin{subfigure}[b]{0.24\textwidth}
		\centering
		\includegraphics[width=1.0\textwidth]{./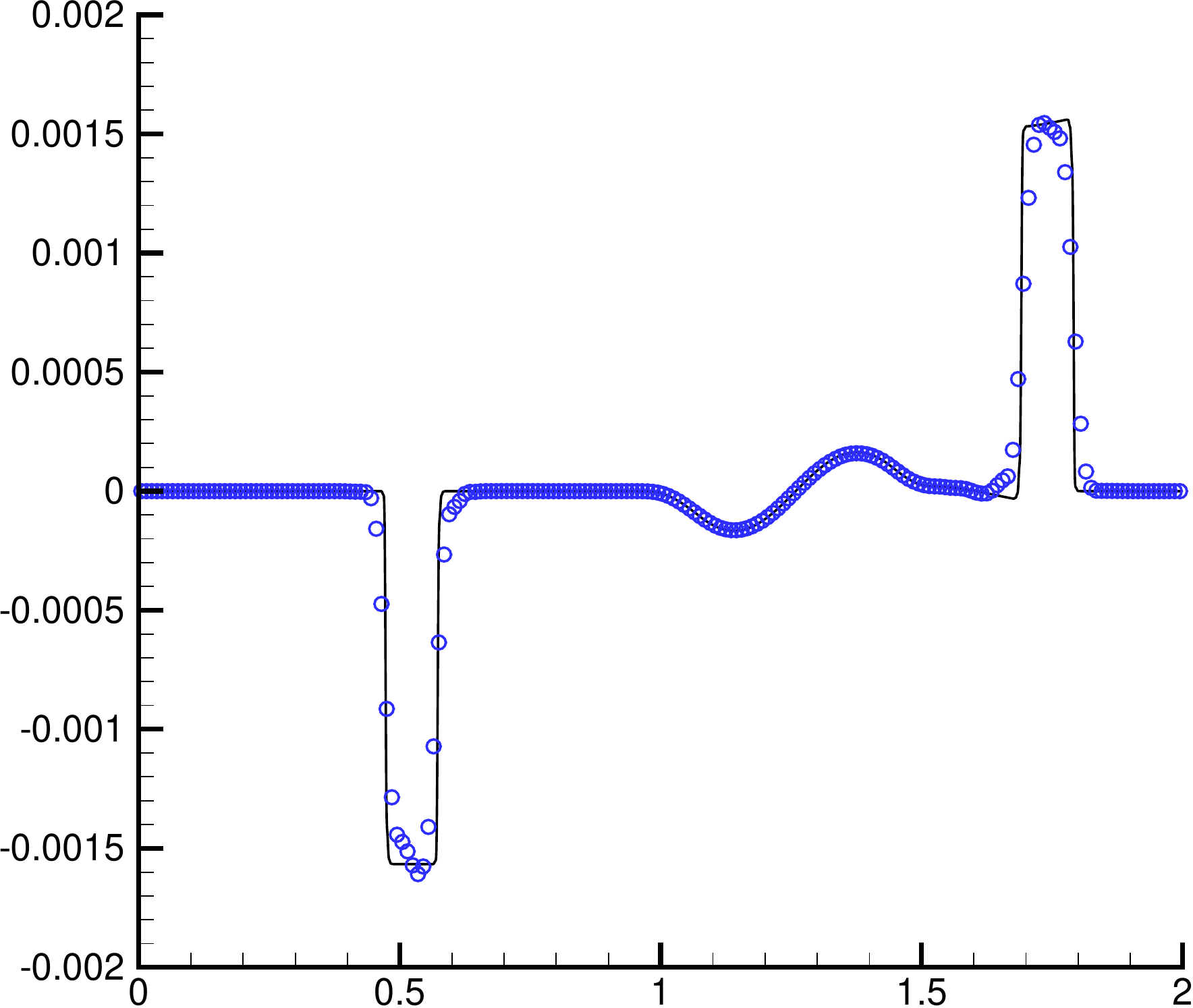}
		\caption{$h\vx$ for $\epsilon=0.001$}
	\end{subfigure}
	\caption{Example \ref{ex:1Dperturb}:  The surface level $h+b$
				and the discharge $h\vx$ obtained by using the ES scheme with $N_x=200$. 
				The solid lines denote the reference solutions obtained by using the ES schemes with $N_x=3000$.
}
	\label{fig:1DSW_perturb}
\end{figure}

\begin{figure}[ht!]
	\begin{subfigure}[b]{0.24\textwidth}
		\centering
		\includegraphics[width=1.0\textwidth]{./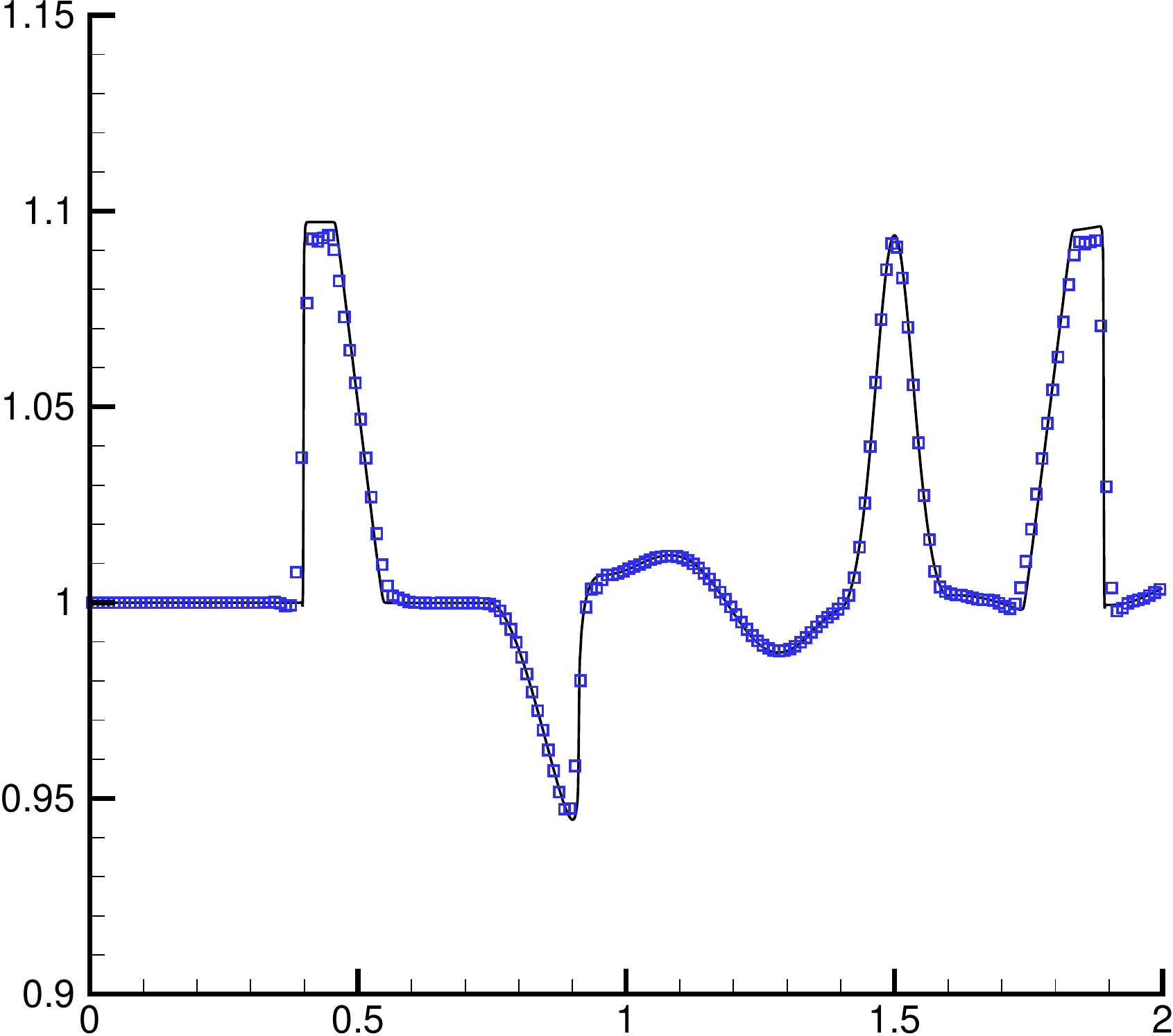}
		\caption{$h+b$ { for $\epsilon=0.2$}}
	\end{subfigure}
	\begin{subfigure}[b]{0.24\textwidth}
		\centering
		\includegraphics[width=1.0\textwidth]{./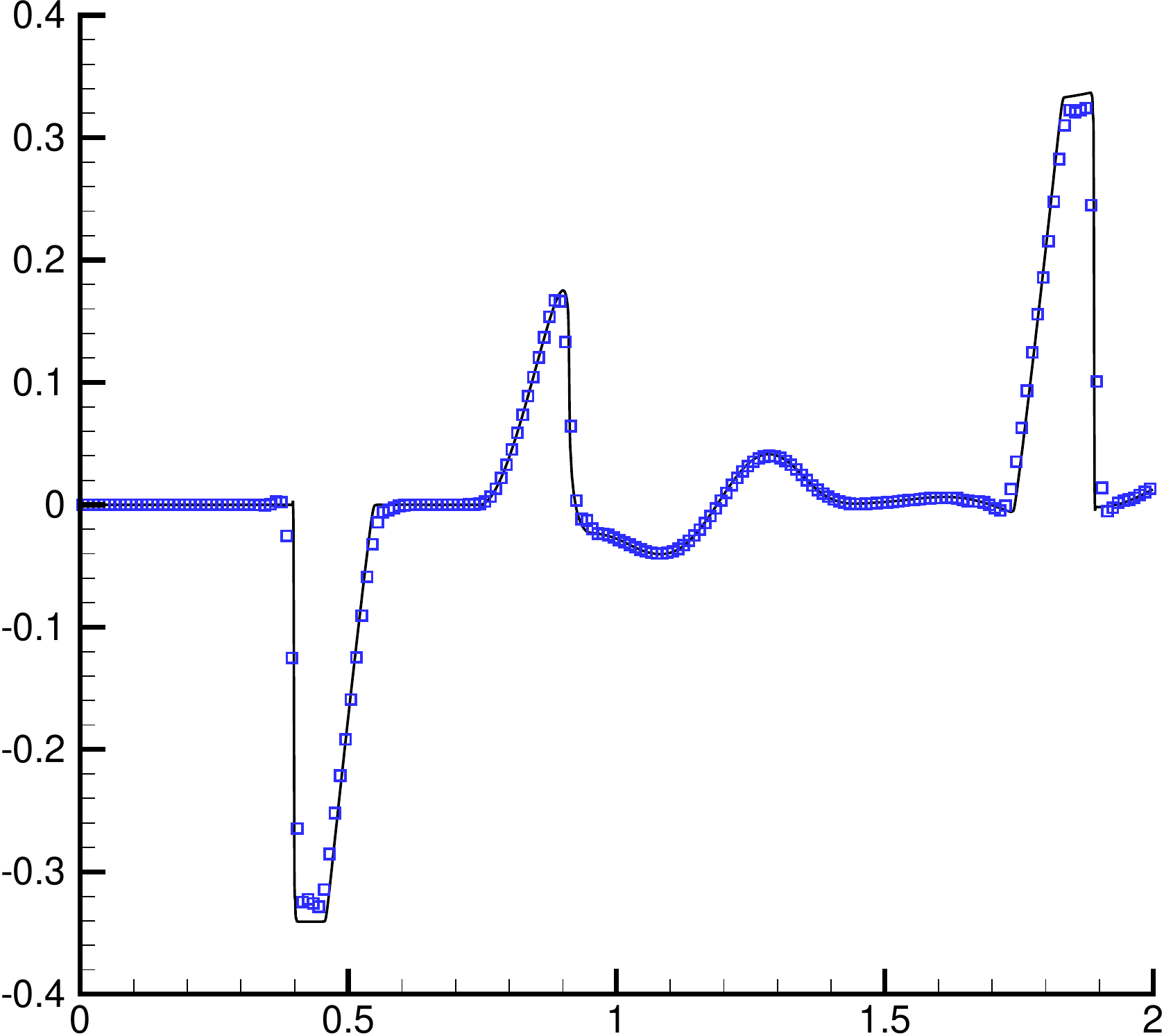}
		\caption{$h\vx$ {for $\epsilon=0.2$}}
	\end{subfigure}
	\begin{subfigure}[b]{0.24\textwidth}
		\centering
		\includegraphics[width=1.0\textwidth]{./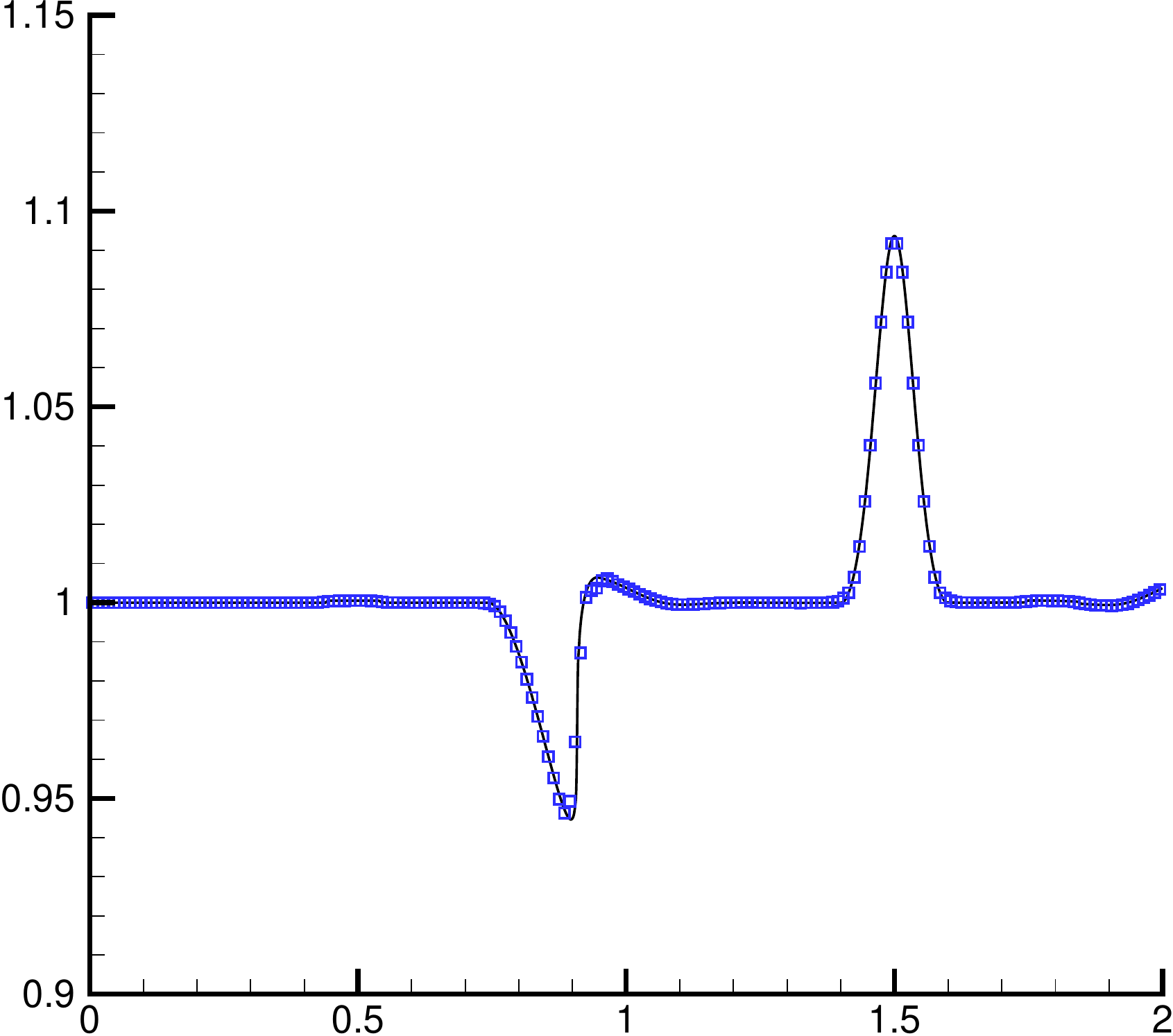}
		\caption{$h+b$ {for $\epsilon=0.001$}}
	\end{subfigure}
	\begin{subfigure}[b]{0.24\textwidth}
		\centering
		\includegraphics[width=1.0\textwidth]{./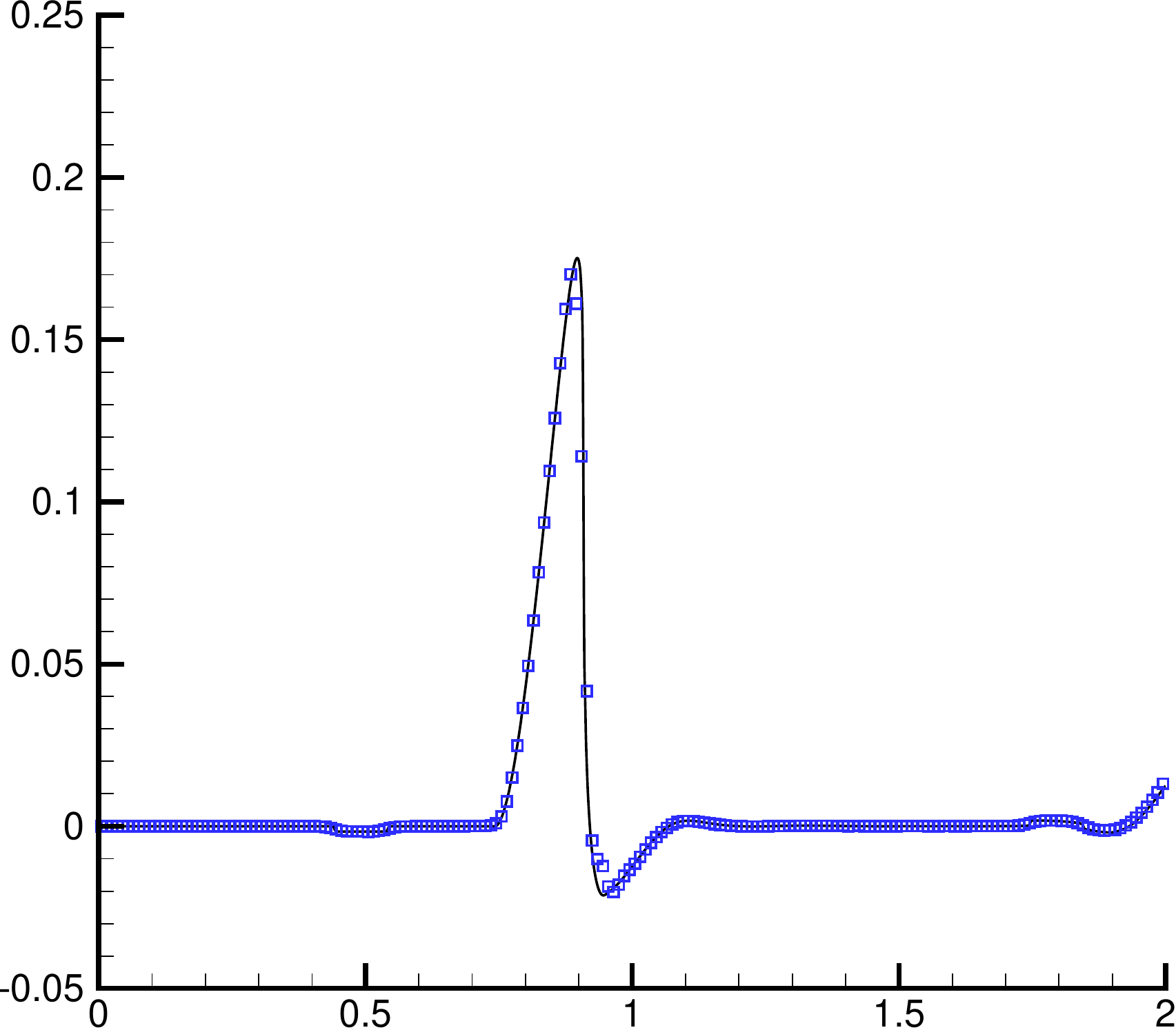}
		\caption{$h\vx$ { for $\epsilon=0.001$}}
	\end{subfigure}
	\caption{Same as Figure \ref{fig:1DSW_perturb} except for $h\Bx=1$. }
	\label{fig:1DSWMHD_perturb}
\end{figure}

\begin{example}[Riemann problem \cite{DeSterck2001}]\label{ex:RP1}\rm
  The initial data   are
  \begin{equation*}
    (h,\vx,\vy,\Bx,\By)=\begin{cases}
      (1,~0,~0,~1,~0),        & x<0, \\
      (2,~0,~0,~0.5,~1),      & x>0.
    \end{cases}
  \end{equation*}
The initial discontinuity will be decomposed into
two magnetogravity waves and two Alfv\'{e}n waves propagating away in two directions as the time increases.
The problem is solved until $t=0.4$ with $g=1$.
\end{example}

Figure \ref{fig:RP1} presents the solutions $h,\vx,\vy,\Bx,\By$ at $t=0.4$ obtained by the ES schemes with $N_x=100$.
One can see that
our numerical solutions are in good agreement with the
reference solutions, and the discontinuities are well captured without obvious oscillations. 

\begin{figure}[ht!]
  \begin{subfigure}[b]{0.19\textwidth}
    \centering
    \includegraphics[width=1.0\textwidth]{./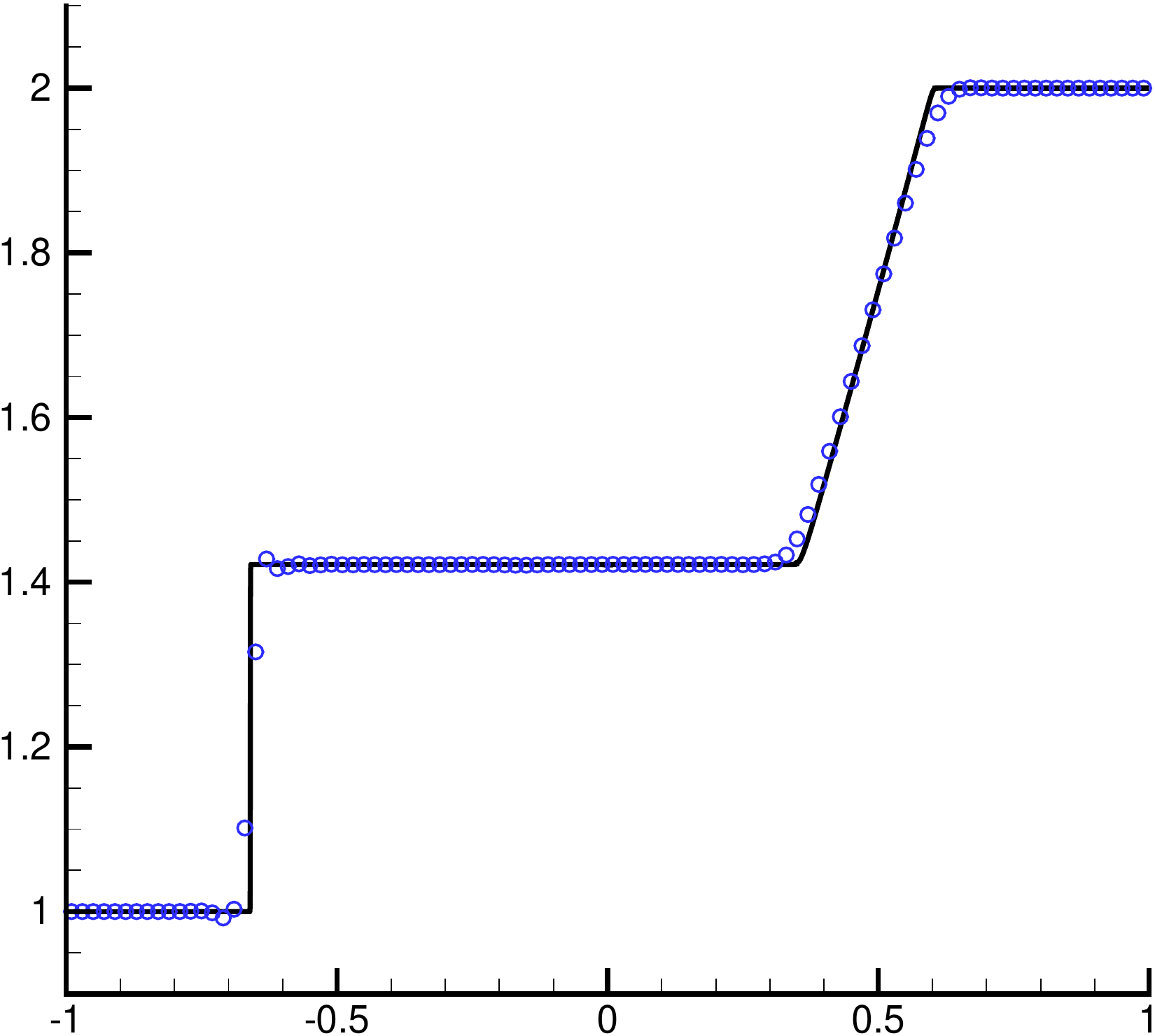}
    \caption{$h$}
  \end{subfigure}
  \begin{subfigure}[b]{0.19\textwidth}
    \centering
    \includegraphics[width=1.0\textwidth]{./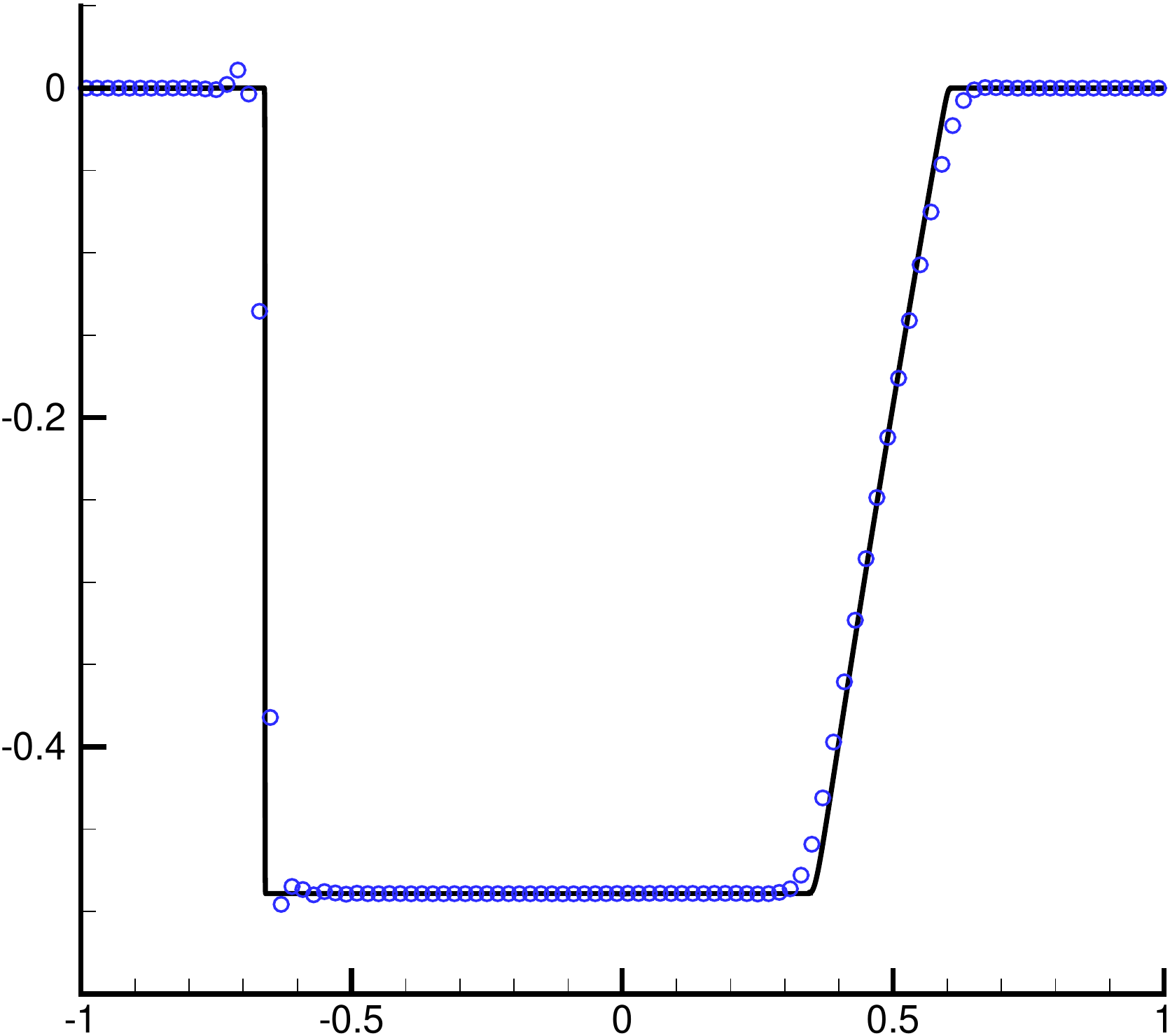}
    \caption{$\vx$}
  \end{subfigure}
  \begin{subfigure}[b]{0.19\textwidth}
    \centering
    \includegraphics[width=1.0\textwidth]{./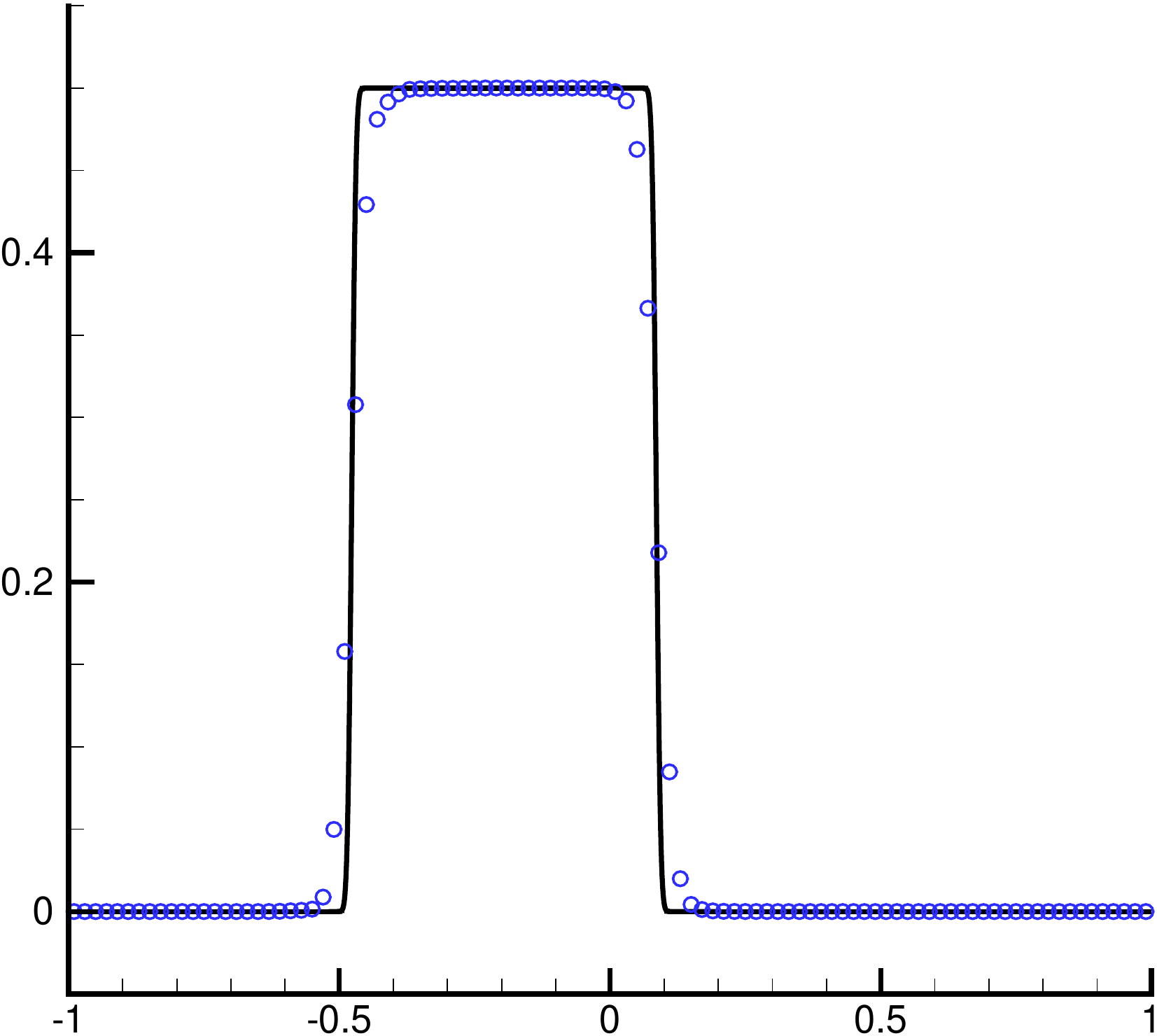}
    \caption{$\vy$}
  \end{subfigure}
\begin{subfigure}[b]{0.19\textwidth}
	\centering
	\includegraphics[width=1.0\textwidth]{./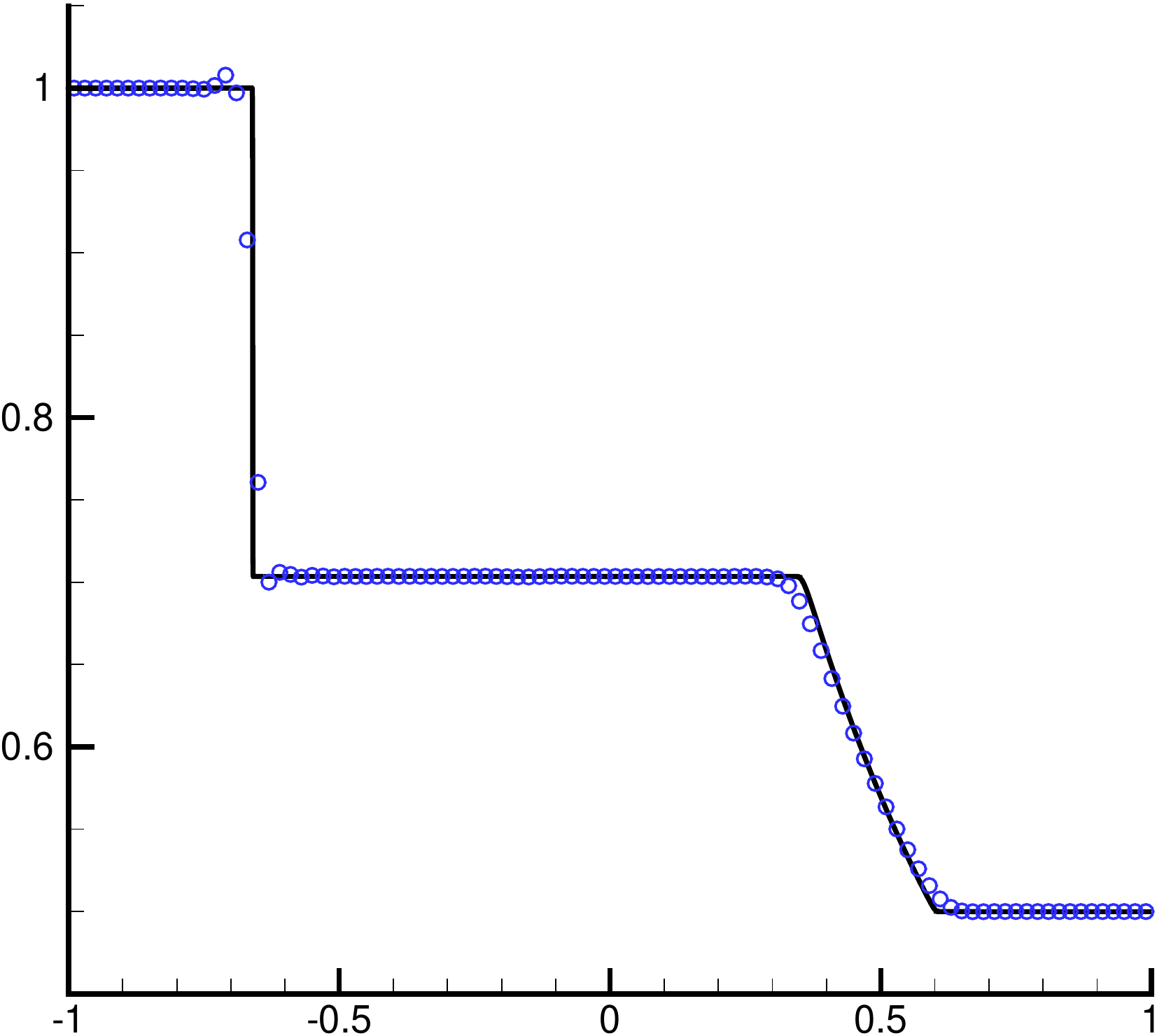}
	\caption{$\Bx$}
\end{subfigure}
\begin{subfigure}[b]{0.19\textwidth}
	\centering
	\includegraphics[width=1.0\textwidth]{./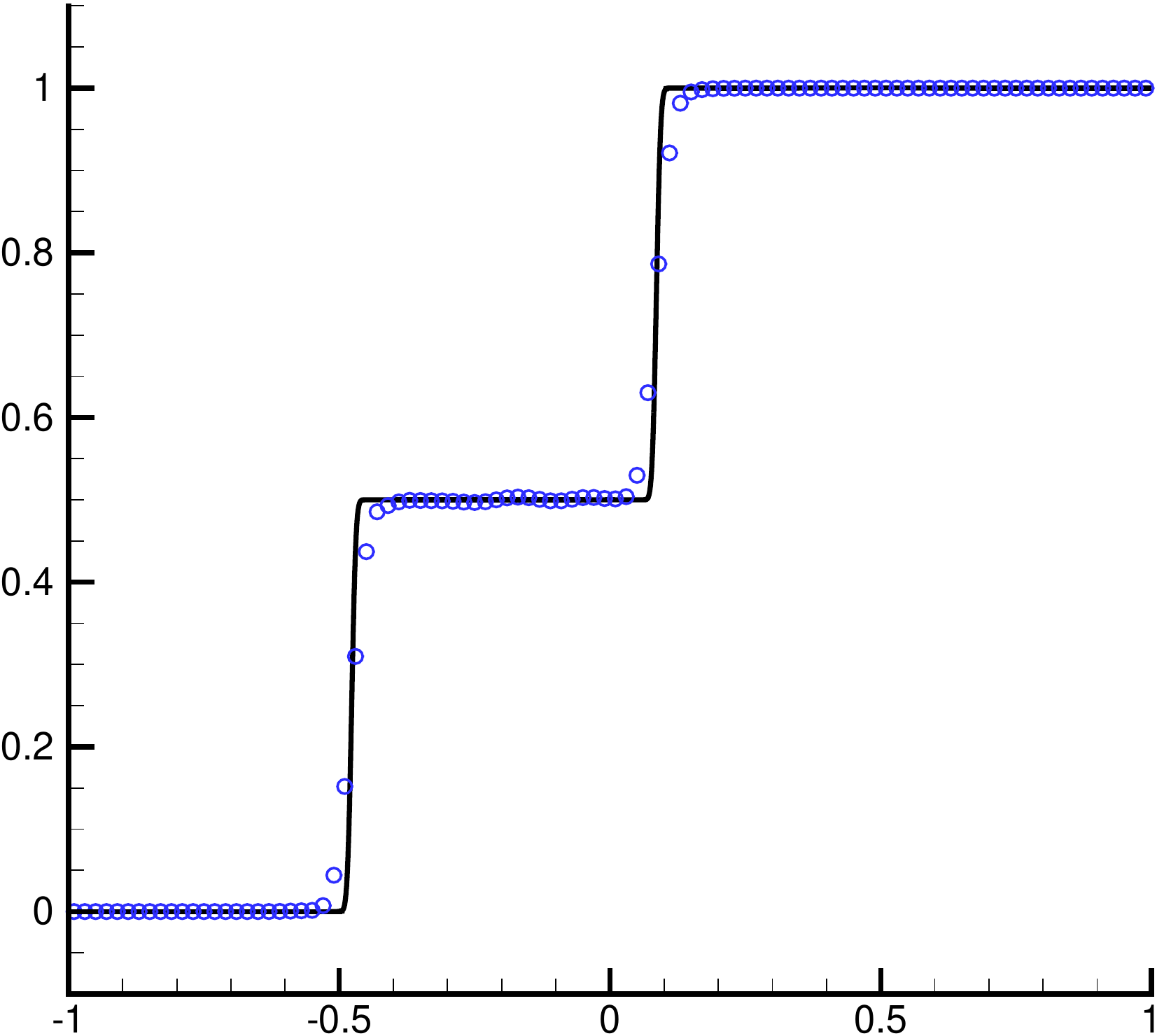}
	\caption{$\By$}
\end{subfigure}
  \caption{Example \ref{ex:RP1}: The solutions at $t=0.4$ (``$\circ$'') are  obtained by the ES schemes
with $N_x=100$. The solid lines denote
the reference solutions obtained by using the Lax-Friedrichs scheme with a fine mesh of $N_x=20000$.  }
  \label{fig:RP1}
\end{figure}

\subsection{Two-dimensional case}

\begin{example}[Vortex]\label{ex:vortex}\rm
This genuine 2D vortex problem is designed to test the accuracy and the positivity-preserving  property of our schemes.
  With the aid of the SWMHD equations in the polar coordinates given
  in \ref{app},
  a steady vortex is constructed as follows
  \begin{align*}
  &h'=h_{\text{max}}-\left(v_{\text{max}}^2-B_{\text{max}}^2\right)e^{1-r^2}/(2g),\\
  &(\vx', \vy')=v_{\text{max}} e^{0.5(1-r^2)}(-y, x),\\
  &(\Bx', \By')=B_{\text{max}} e^{0.5(1-r^2)}(-y, x),
  \end{align*}
  with $v_{\text{max}}=0.2,B_{\text{max}}=0.1,r=\sqrt{x^2+y^2}$.
 Using the Galilean transformation $x'= x-t,  y' =y-t, t'=t$
 can give a time-dependent exact solution
  \begin{align*}
    &h(x,y,t)=h'(x-t,y-t,t),\quad (\vx, \vy)(x,y,t)=(1,1) + (\vx', \vy')(x-t,y-t,t),\\
    &(\Bx, \By)(x,y,t)=(\Bx', \By')(x-t,y-t,t),
  \end{align*}
  which describes a vortex moving with a constant speed $(1,1)$.

\end{example}

  The computational domain is $[-8,8]^2$ with periodic boundary conditions,  $g=1$, $h_{\text{max}}=1$, and the output time is $t=16$ so that the vortex
travels and returns to the original position after a period.
Table \ref{tab:vortex} lists the errors in $h$ and
corresponding orders of convergence. The results show that
our EC   and ES schemes achieve the optimal convergence order.
Figure \ref{fig:vortex} plots the contours of $h$ and the
magnitude of the magnetic field $|\bm{B}|$ with $40$ equally spaced contour lines.
It can be seen that our schemes can preserve the shape of the vortex well after a whole period.

To check the positivity-preserving property, let us do a test with
$$h_{\text {max}}=10^{-6}+\left(v_{\text{max}}^2-B_{\text{max}}^2\right)e/(2g),$$
which implies that the lowest water height is $10^{-6}$. The errors and orders of convergence
obtained by using the ES scheme with or without the positivity-preserving (PP) limiter are listed in Table \ref{tab:vortex_PP}.
One can see that the ES scheme fails with $N_x=N_y= 20,40$ and without the positivity-preserving limiter,
and the positivity-preserving limiter does not destroy the high-order accuracy.

\begin{table}[!ht]
  \centering
  \begin{tabular}{r|cc|cc|cc|cc} \hline
  	\multirow{2}{*}{$N_x=N_y$} & \multicolumn{4}{c|}{EC scheme} & \multicolumn{4}{c}{ES scheme} \\ \cline{2-9}
  	& $\ell^1$ error & order & $\ell^\infty$ error & order & $\ell^1$ error & order & $\ell^\infty$ error & order \\ \hline
   20 & 4.079e-05 &  -   & 1.787e-02 &  -   & 3.756e-05 &  -   & 2.446e-02 &  -   \\
   40 & 7.025e-07 & 5.86 & 1.543e-03 & 3.53 & 4.082e-06 & 3.20 & 1.012e-02 & 1.27 \\
   80 & 6.401e-09 & 6.78 & 3.028e-05 & 5.67 & 1.015e-07 & 5.33 & 7.531e-04 & 3.75 \\
  160 & 5.244e-11 & 6.93 & 4.994e-07 & 5.92 & 1.582e-09 & 6.00 & 2.340e-05 & 5.01 \\
  320 & 4.143e-13 & 6.98 & 7.904e-09 & 5.98 & 2.453e-11 & 6.01 & 7.205e-07 & 5.02 \\
  	\hline
  \end{tabular}
  \caption{Example \ref{ex:vortex}: Errors and orders of convergence in $h$ at $t=16$ obtained by using the
  	EC and ES schemes, $h_{\text{max}}=1$.}
  \label{tab:vortex}
\end{table}

\begin{figure}[ht!]
  \begin{subfigure}[b]{0.5\textwidth}
    \centering
    \includegraphics[width=1.0\textwidth]{./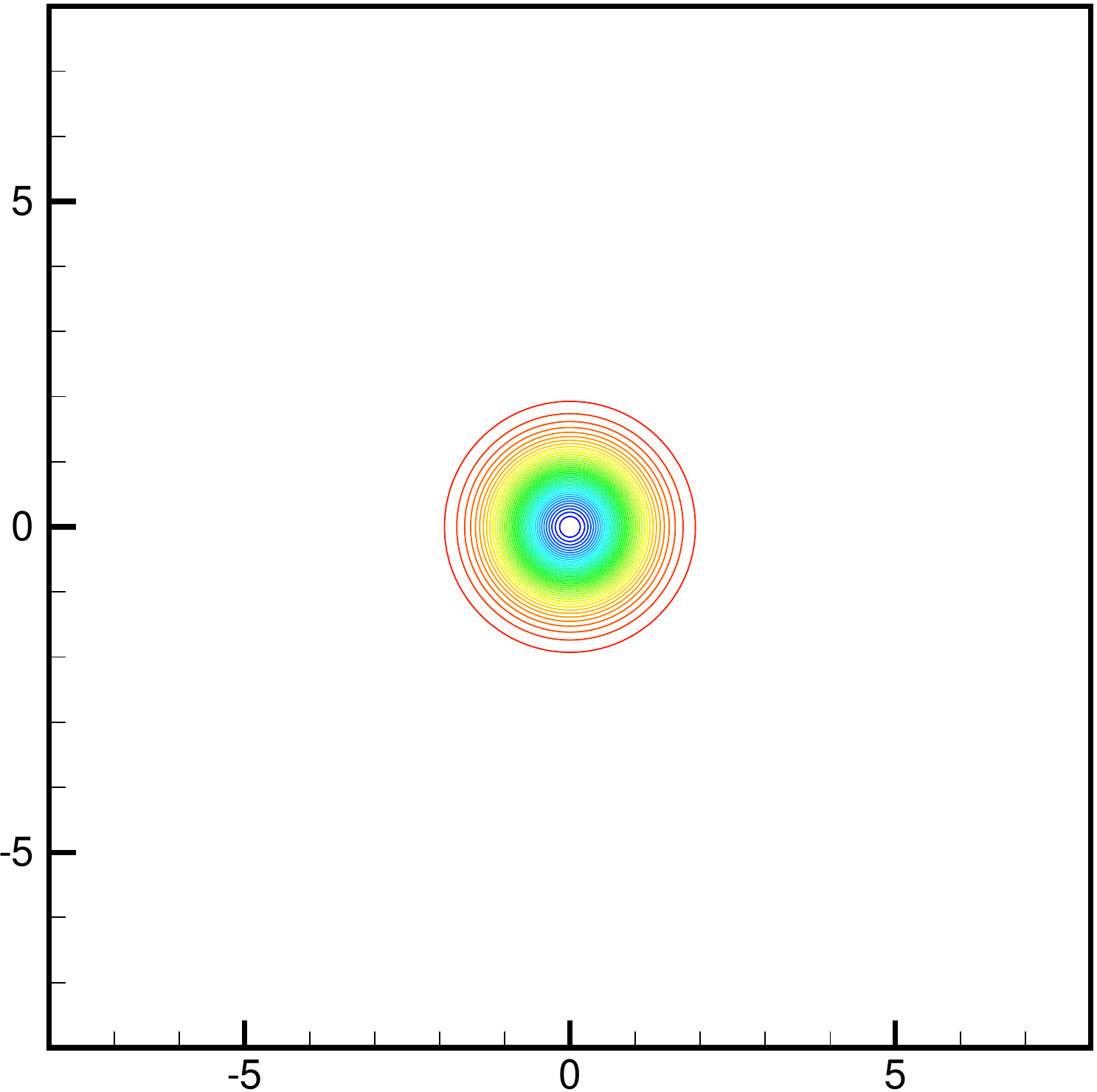}
  \end{subfigure}
  \begin{subfigure}[b]{0.5\textwidth}
    \centering
    \includegraphics[width=1.0\textwidth]{./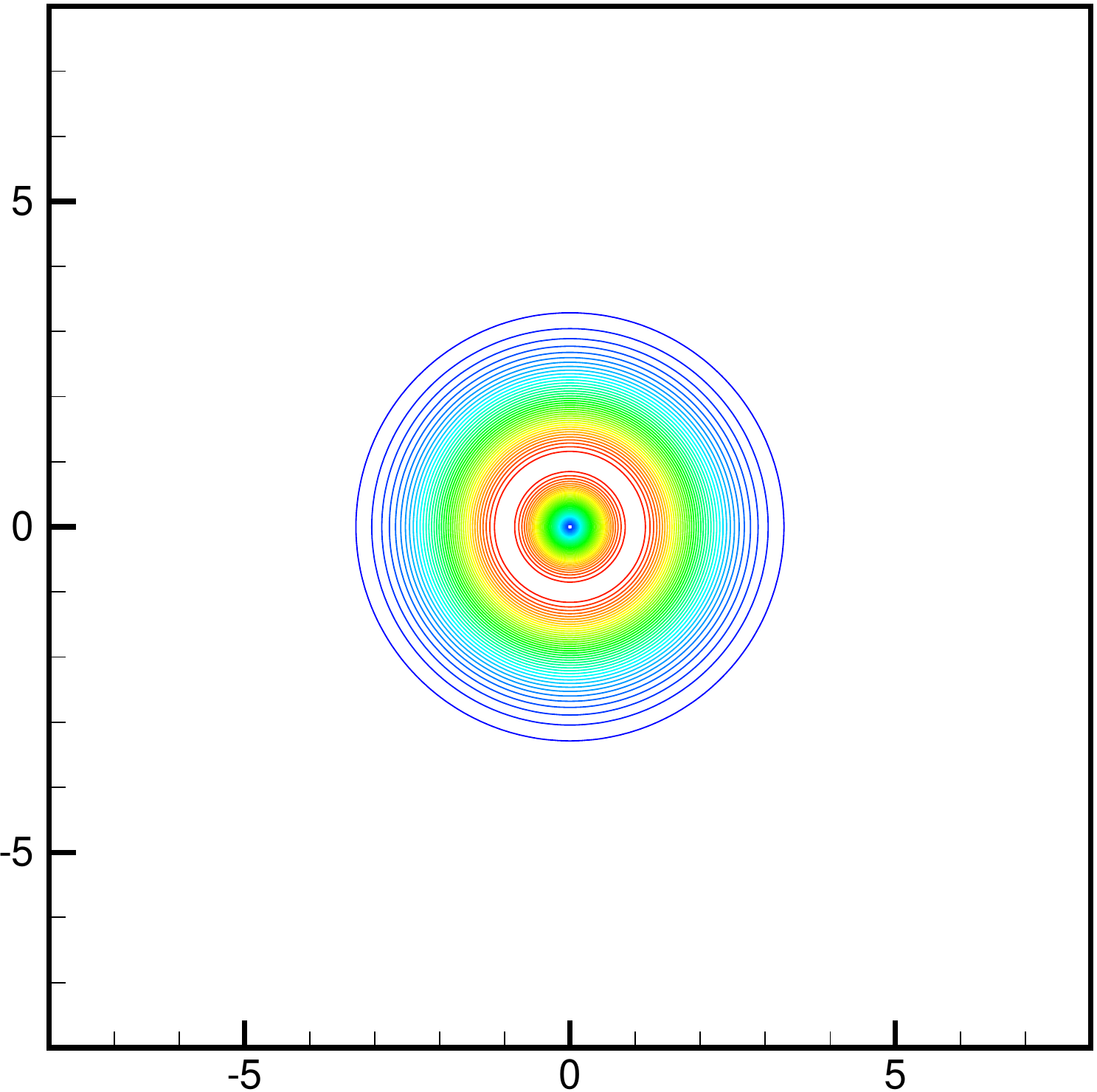}
  \end{subfigure}
  \caption{Example \ref{ex:vortex}: The contours of $h$ (left) and $|\bm{B}|$ (right) at $t=16$
  	obtained by using the ES scheme with $N_x=N_y=320$ and $40$ equally spaced contour lines.}
  \label{fig:vortex}
\end{figure}

\begin{table}[!ht]
	\centering
	\begin{tabular}{r|cc|cc|cc|cc} \hline
		\multirow{2}{*}{$N_x=N_y$} & \multicolumn{4}{c|}{Without PP limiter} & \multicolumn{4}{c}{With PP limiter} \\ \cline{2-9}
		& $\ell^1$ error & order & $\ell^\infty$ error & order & $\ell^1$ error & order & $\ell^\infty$ error & order \\ \hline
   20 &     -     &  -   &     -     &  -   & 4.007e-05 &  -   & 2.680e-02 &  -   \\
   40 &     -     &  -   &     -     &  -   & 3.426e-06 & 3.55 & 7.087e-03 & 1.92 \\
   80 & 1.109e-07 &  -   & 2.284e-03 &  -   & 1.209e-07 & 4.82 & 2.657e-03 & 1.42 \\
  160 & 6.357e-09 & 4.25 & 1.000e-03 & 1.41 & 5.520e-09 & 4.45 & 9.224e-04 & 1.53 \\
  320 & 3.094e-11 & 7.68 & 1.703e-05 & 5.88 & 3.089e-11 & 7.48 & 1.703e-05 & 5.76 \\
		\hline
	\end{tabular}
	\caption{Example \ref{ex:vortex}: Errors and orders of convergence in $h$  at $t=16$ obtained by using the ES schemes, $h_{\text {max}}=10^{-6}+\left(v_{\text{max}}^2-B_{\text{max}}^2\right)e/(2g)$.}
	\label{tab:vortex_PP}
\end{table}


\begin{example}[Well-balanced test \cite{LeVeque1998}]\label{ex:2DWB}\rm
	It is used to validate the well-balanced property of our EC and ES schemes.
	The bottom topography is taken as
	\begin{equation}\label{eq:2Dsmooth_b}
	  b(x,y)=0.8\exp(-5(x-0.9)^2-50(y-0.5)^2),
	\end{equation}
	or
	\begin{equation}\label{eq:2Ddiscontinuous_b}
	  b(x,y)=0.5\chi_{[0.5,1.5]\times [0.25,0.75]}.
	\end{equation}
	The computational domain is $[0,2]\times [0,1]$,  $g=1$, and
	the initial data are $h(x,y)=1-b(x,y)$ with zero velocity and magnetic field.
	
\end{example}
The problem is solved until $t=1$ with $N_x=N_y=40$.
The errors in $h,\vx,\vy$ are listed in Table \ref{tab:2DWB}. Similar to Example \ref{ex:1DWB},
one can see that the well-balanced property of the 2D schemes has been
verified in the sense that the errors are at the level of round-off errors for the double precision.

\begin{table}[!ht]
	\centering
	\begin{tabular}{rc|cc|cc} \hline
		\multirow{2}{*}{} &  & \multicolumn{2}{c|}{EC scheme} & \multicolumn{2}{c}{ES scheme} \\ \cline{3-6}
		&  & $\ell^1$ error & $\ell^\infty$ error & $\ell^1$ error & $\ell^\infty$ error \\ \hline
	\multirow{3}{*}{\eqref{eq:2Dsmooth_b}} 	  & $h$   &  4.943e-15 & 3.886e-14 & 2.293e-15 & 1.077e-14 \\
	                                          & $\vx$ &  5.091e-15 & 4.091e-14 & 2.577e-15 & 1.010e-14 \\
	                                          & $\vy$ &  3.766e-15 & 3.168e-14 & 1.887e-15 & 9.742e-15 \\ \hline
\multirow{3}{*}{\eqref{eq:2Ddiscontinuous_b}} & $h$   &  8.437e-16 & 5.329e-15 & 8.102e-16 & 4.663e-15 \\
	                                          & $\vx$ &  5.720e-16 & 4.585e-15 & 5.731e-16 & 3.792e-15 \\
	                                          & $\vy$ &  1.385e-15 & 7.659e-15 & 1.367e-15 & 6.306e-15 \\
		\hline
	\end{tabular}
	\caption{Example \ref{ex:2DWB}: Errors in $h,\vx,\vy$ at $t=1$ for the bottom topography \eqref{eq:2Dsmooth_b} and \eqref{eq:2Ddiscontinuous_b}.}
	\label{tab:2DWB}
\end{table}

\begin{example}[Small perturbation of a steady state (without magnetic field) \cite{LeVeque1998}]\label{ex:2Dperturb}\rm
	This problem is used to check the capability of the ES schemes for the perturbation of the steady state.
	The computational domain and the bottom topography are the same as the last test,
	and the initial data are
	\begin{equation*}
	h=\begin{cases}
	1.01-b(x),      & \text{if} ~~0.05<x<0.15, \\
	1-b(x),         & \text{otherwise},
	\end{cases}
	\end{equation*}
	with zero velocity and magnetic field.
    Outflow boundary conditions are used and  the problem is solved until $t=0.6$ with $g=9.812$.
\end{example}

Figure \ref{fig:2DSW_perturb} shows the contours of the surface level $h+b$ (40
equally spaced contour lines) at $t = 0.12,0.24,0.36,0.48,0.6$ obtained by using the ES scheme with
$N_x= 600,N_y = 300$,
which describe a right-going disturbance
as it propagates past the hump.
It can be seen that the complex small features are well captured without any spurious oscillations, and
the results are comparable to those in \cite{Xing2005}.

\begin{figure}[!htb]
		\includegraphics[width=0.5\textwidth]{./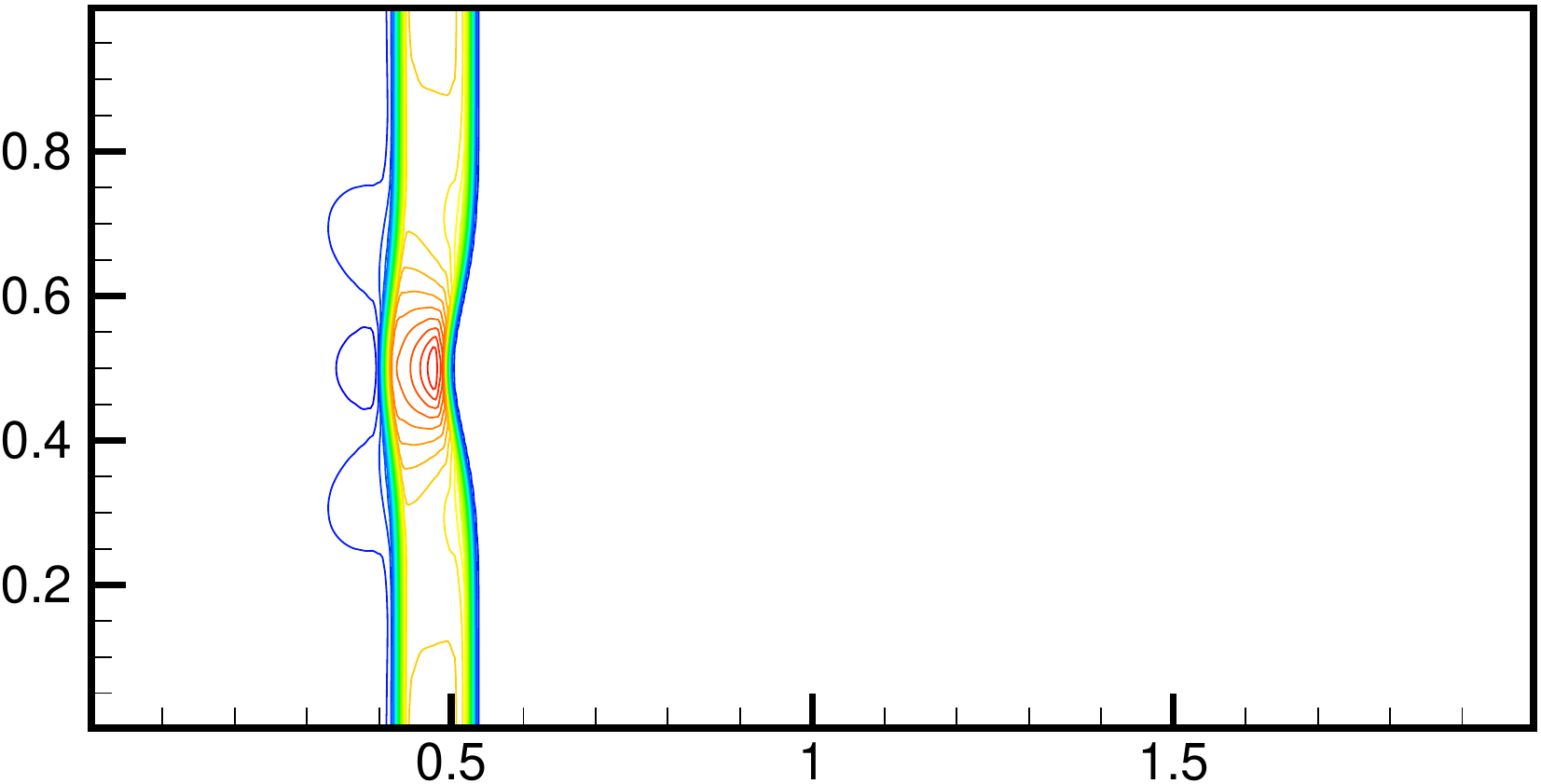}
		\includegraphics[width=0.5\textwidth]{./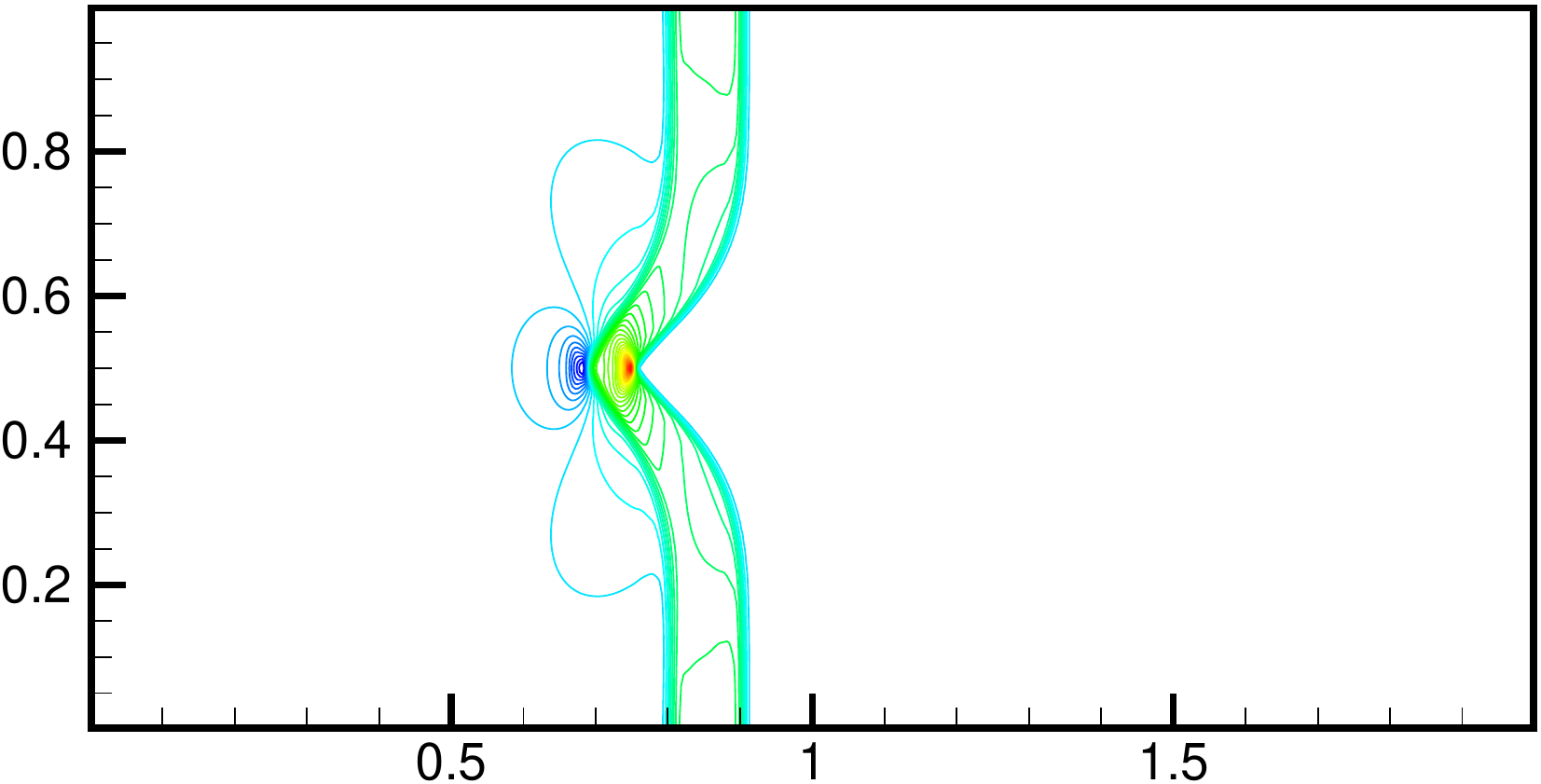}
		\includegraphics[width=0.5\textwidth]{./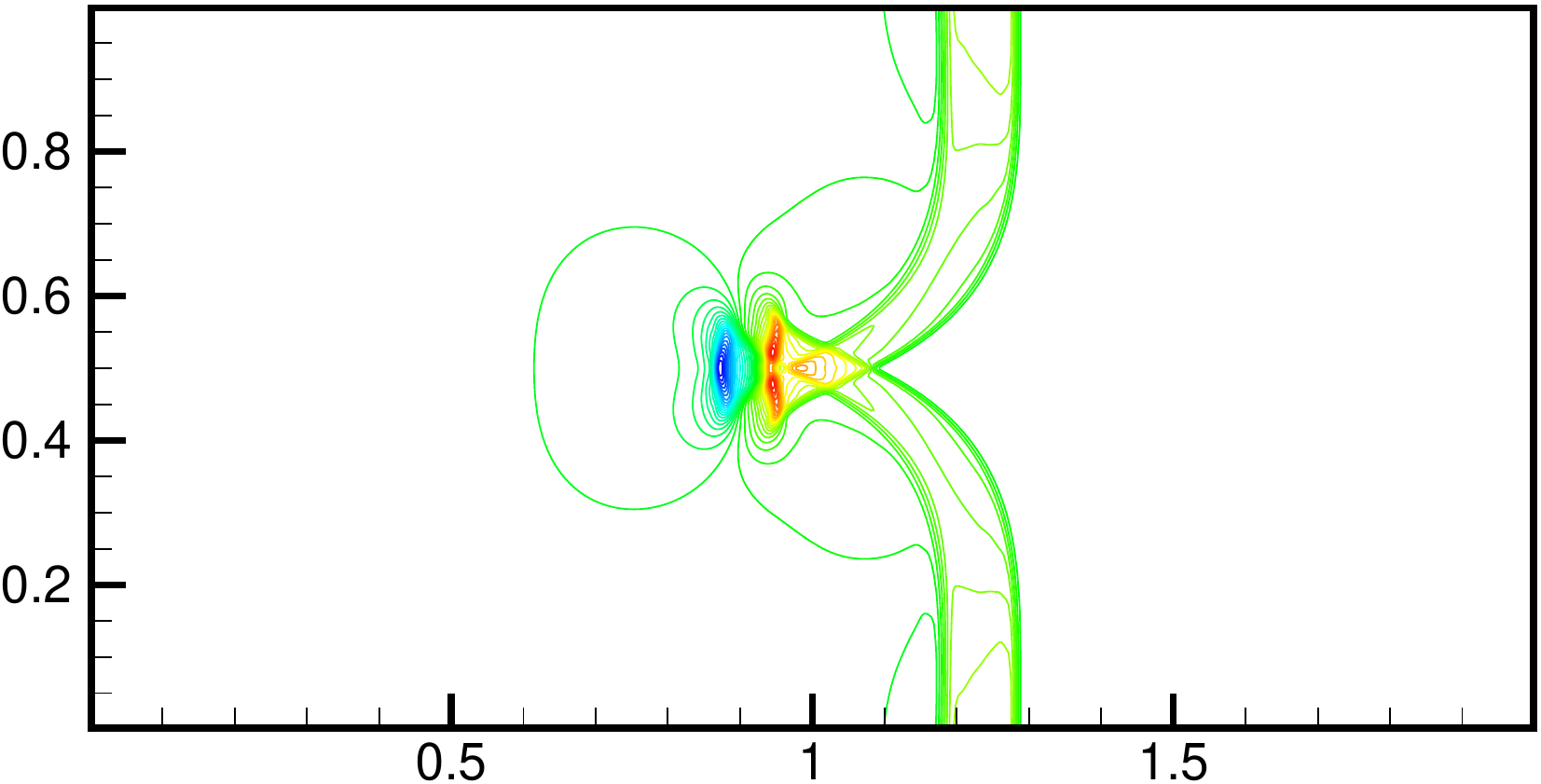}
		\includegraphics[width=0.5\textwidth]{./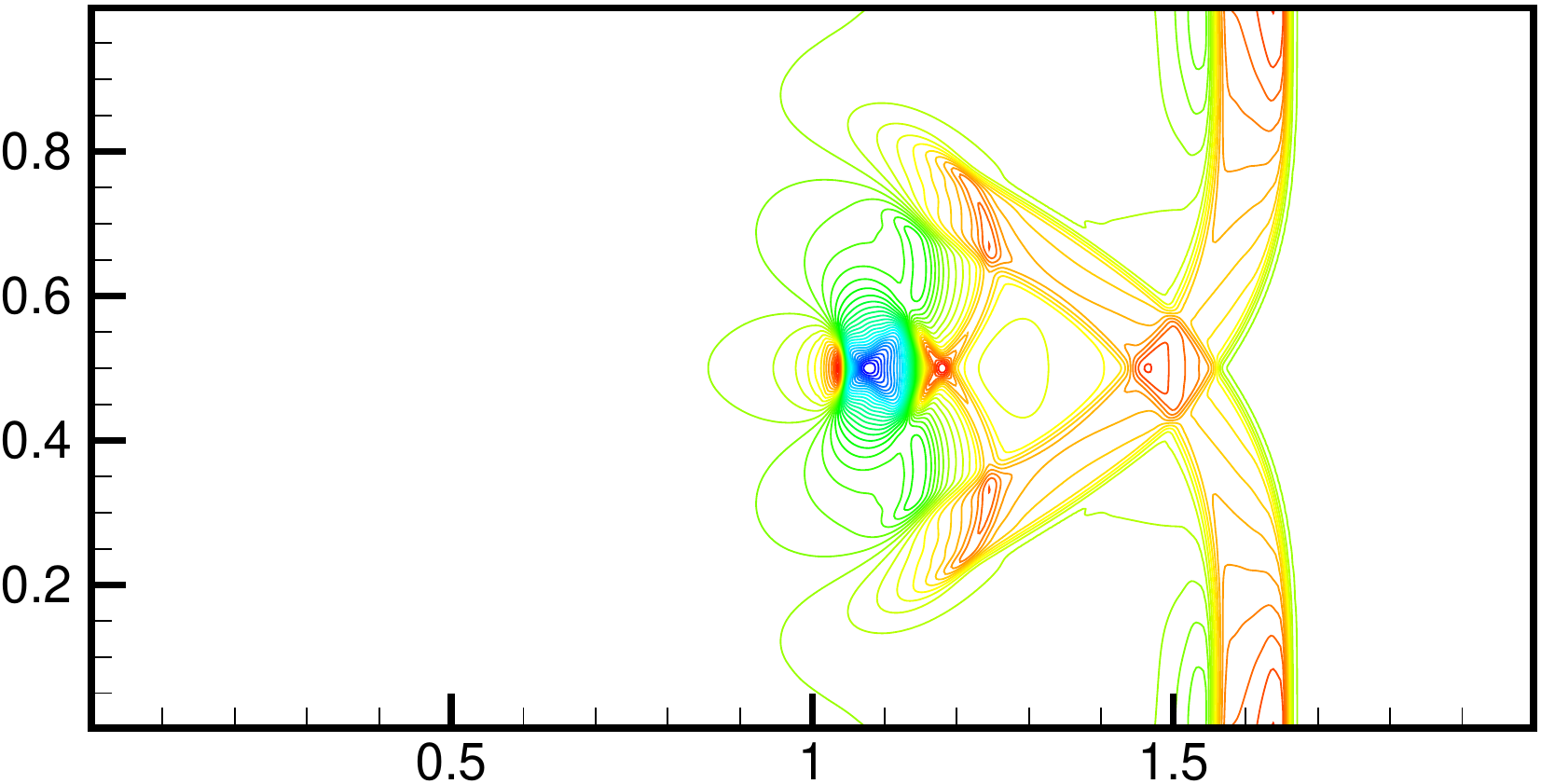}
        \includegraphics[width=0.5\textwidth]{./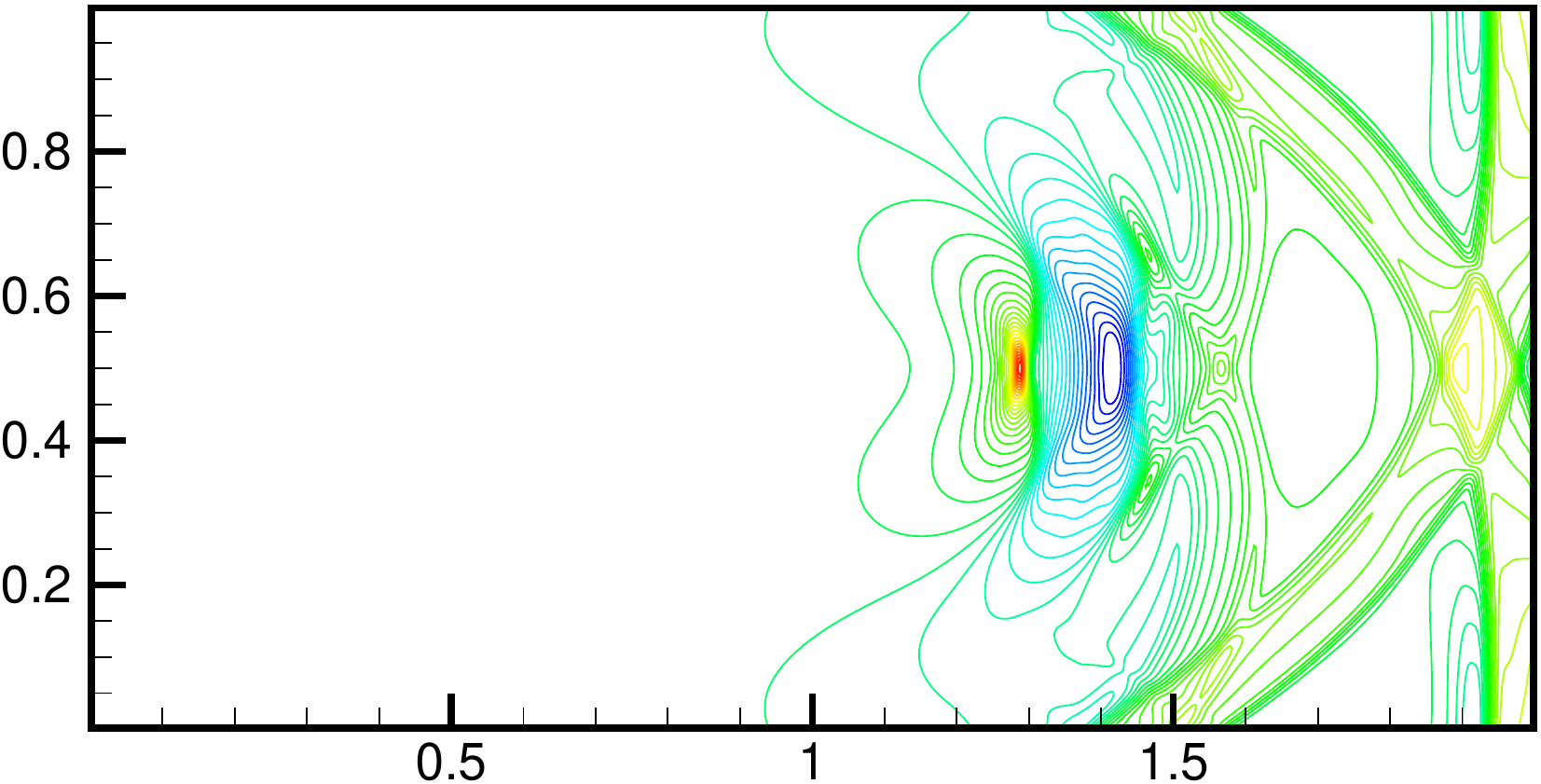}
	\caption{Example \ref{ex:2Dperturb}: The surface level $h+b$ at $t=0.12,0.24,0.36,0.48,0.6$ (From left to right, top to bottom)
		obtained by using the ES scheme with $N_x=600,N_y=300$.
$40$ equally spaced contour lines are used.}
	\label{fig:2DSW_perturb}
\end{figure}

\begin{example}[Orszag-Tang like problem \cite{Zia2014}]\label{ex:OrszagTang}\rm
	It is  similar to the Orszag-Tang problem for the ideal MHD equations \cite{Orszag1979}.
	The computational domain is $[0,2\pi]^2$ with periodic boundary conditions and $g=1$.
	The initial data are
	\begin{align*}
	(h,\vx,\vy,\Bx,\By)=(25/9,-\sin y,\sin x,-\sin y,\sin 2x).
	\end{align*}
\end{example}

The solution of this problem is smooth initially, but the complicated pattern will arise
as the time increases  and it has the turbulence behavior.
Figure \ref{fig:OrszagTang} presents the results obtained by using the ES scheme with $N_x=N_y=200$ at $t=1$ and $2$.
The solutions are in good agreement with those in \cite{Zia2014}.
The left plot in Figure \ref{fig:TotalEntropy} shows the time evolution of the discrete
total entropy $\sum_{i,j}\eta(\bU_{i,j})\Delta x\Delta y$   with three spatial resolutions $N_x=N_y=100$, 200 and 400.
The total entropy is conserved when the solutions are smooth during an initial period, but it decays when discontinuities arise.

\begin{figure}[ht!]
	\begin{subfigure}[b]{0.5\textwidth}
		\centering
		\includegraphics[width=1.0\textwidth]{./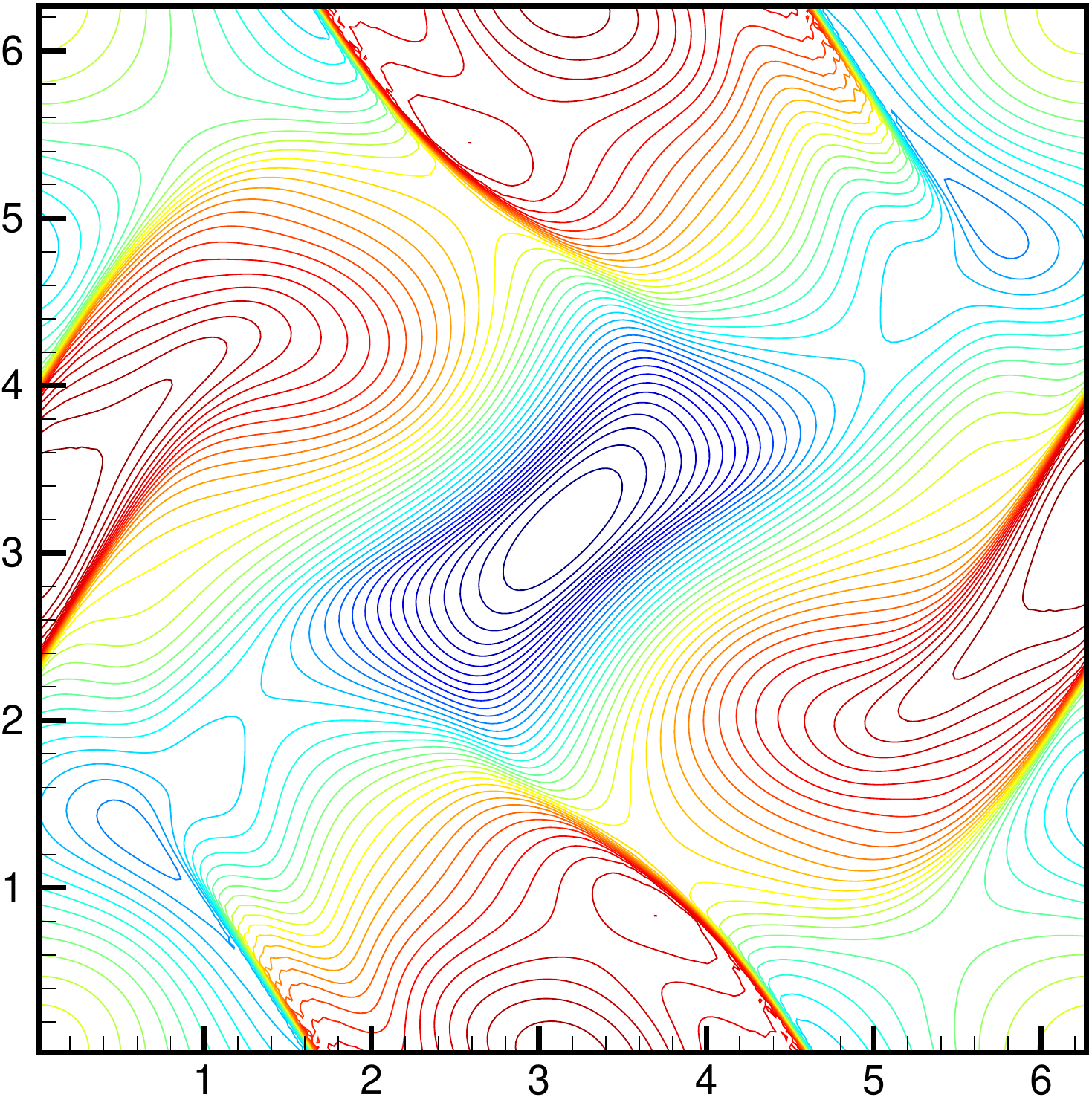}
		\caption{$h$}
	\end{subfigure}
	\begin{subfigure}[b]{0.5\textwidth}
		\centering
		\includegraphics[width=1.0\textwidth]{./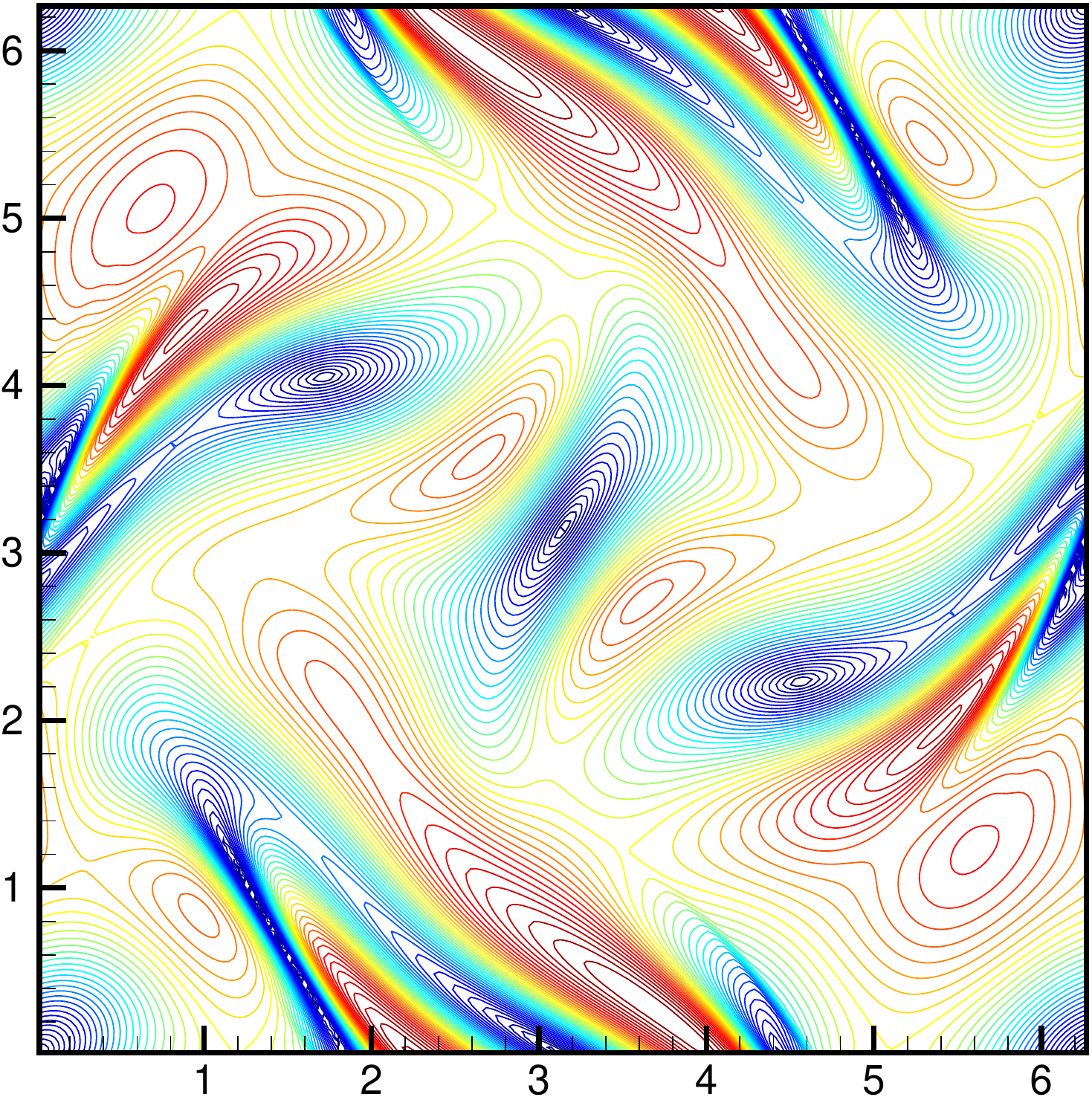}
		\caption{$\abs{\bm{B}}$}
	\end{subfigure}
	\begin{subfigure}[b]{0.5\textwidth}
		\centering
		\includegraphics[width=1.0\textwidth]{./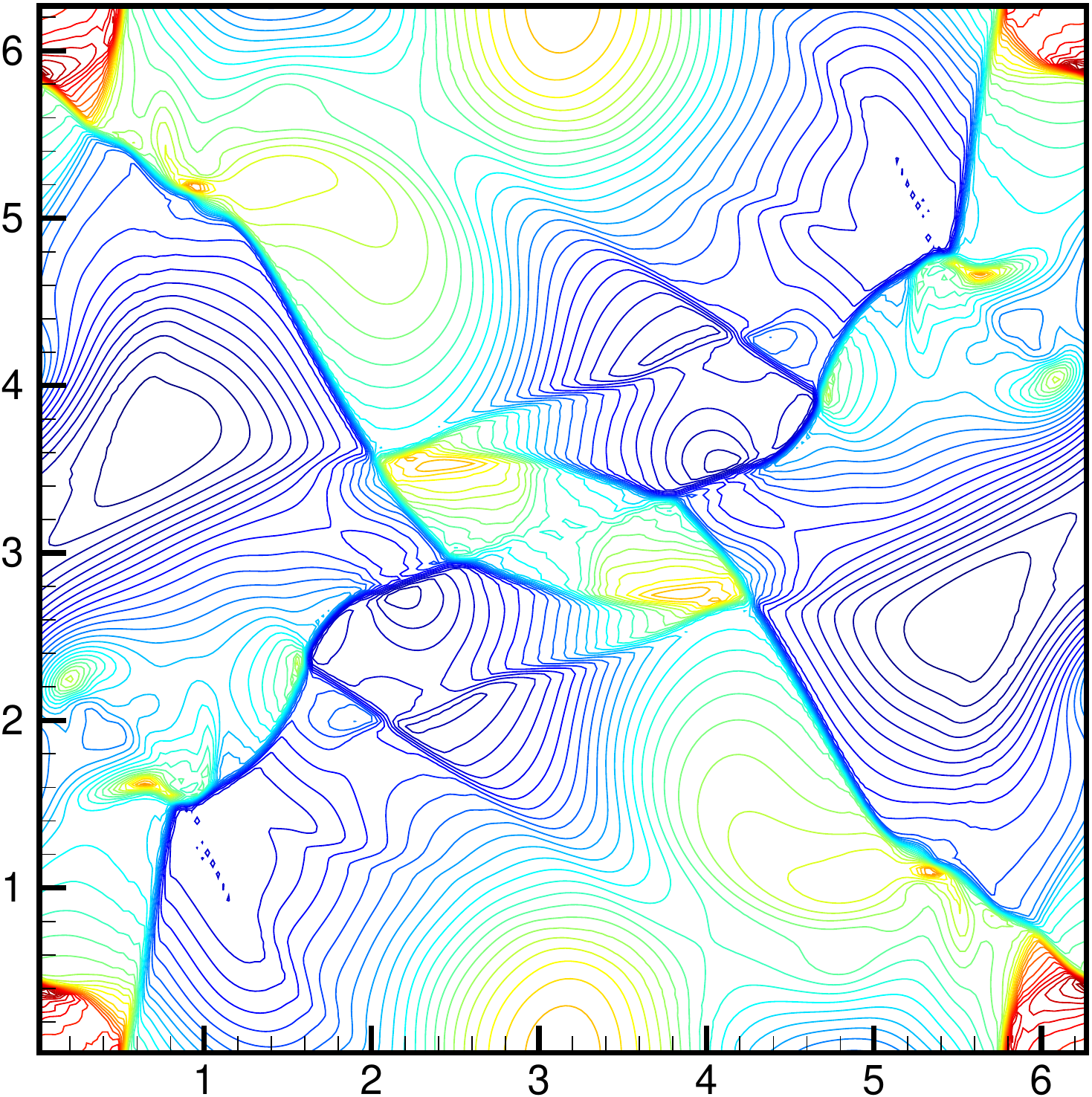}
		\caption{$h$}
	\end{subfigure}
	\begin{subfigure}[b]{0.5\textwidth}
		\centering
		\includegraphics[width=1.0\textwidth]{./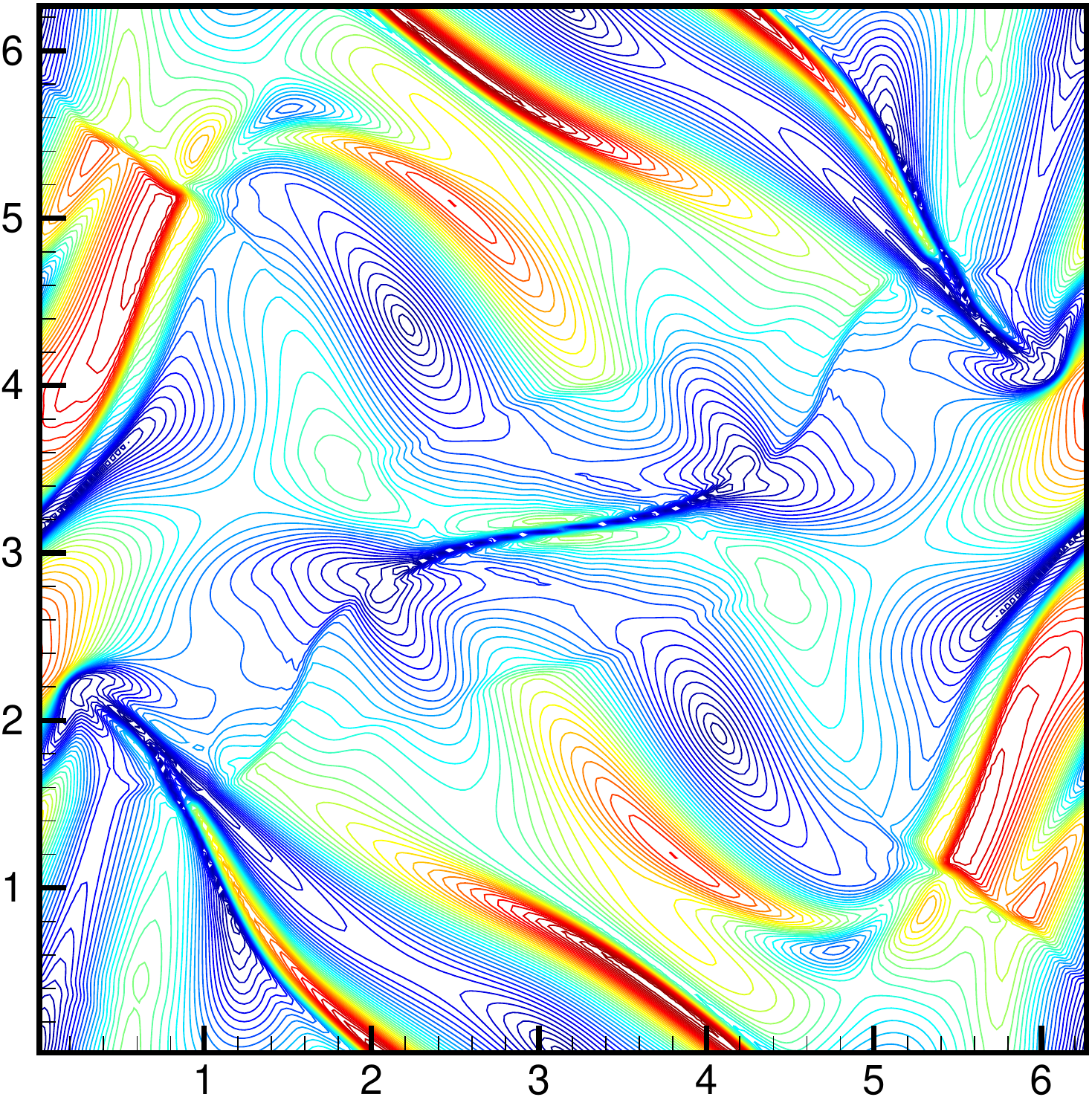}
		\caption{$\abs{\bm{B}}$}
	\end{subfigure}
	\caption{Example \ref{ex:OrszagTang}: The contours  of the height $h$
		 and the magnitude of the magnetic field $\abs{\bm{B}}$
			at $t=1$ (the 1st row) and $2$ (the 2nd row) obtained by using the ES scheme with $N_x=N_y=200$.
 $40$ equally spaced contour lines are used.}
	\label{fig:OrszagTang}
\end{figure}

\begin{example}[Rotor like problem \cite{Kroger2005}]\label{ex:Rotor}\rm
	It is an extension of the classical ideal MHD rotor test problem \cite{Kroger2005}.
	The computational domain is $[-1,1]^2$ with outflow conditions with $g=1$.
	Initially $h\Bx=1$ and $h\By=0$, and there is a disk of radius $r_0=0.1$ centered at $(0, 0)$,
	where fluid with large $h$ is rotating in the anti-clockwise direction.
	The ambient fluid is homogeneous for $r>r_0$, where $r=\sqrt{x^2+y^2}$.
	Specifically, the initial data are
	\begin{align*}
	(h,\vx,\vy)=\begin{cases}
	(10,-y,~x), & r<r_0, \\
	(1,~0,~0), & r>r_0. \\
	\end{cases}
	\end{align*}
	This problem is solved until $t=0.2$.
\end{example}

Figure \ref{fig:Rotor} shows the height $h$, the velocity $\bv$, and
the magnetic field $\bm{B}$ obtained by using the ES scheme with $N_x=N_y=400$.
The ES scheme gets the high resolution results without obvious spurious oscillations
comparable to those in \cite{Kroger2005}.
The right plot of Figure \ref{fig:TotalEntropy} displays the time evolution of the discrete
total entropy with three spatial resolutions $N_x=N_y=100$, 200 and 400.
The results show that the total entropy decays as expected,
and the fully discrete scheme is also ES.

\begin{figure}[htb!]
\begin{subfigure}[b]{1.0\textwidth}
	\centering
	\includegraphics[width=0.4\textwidth]{./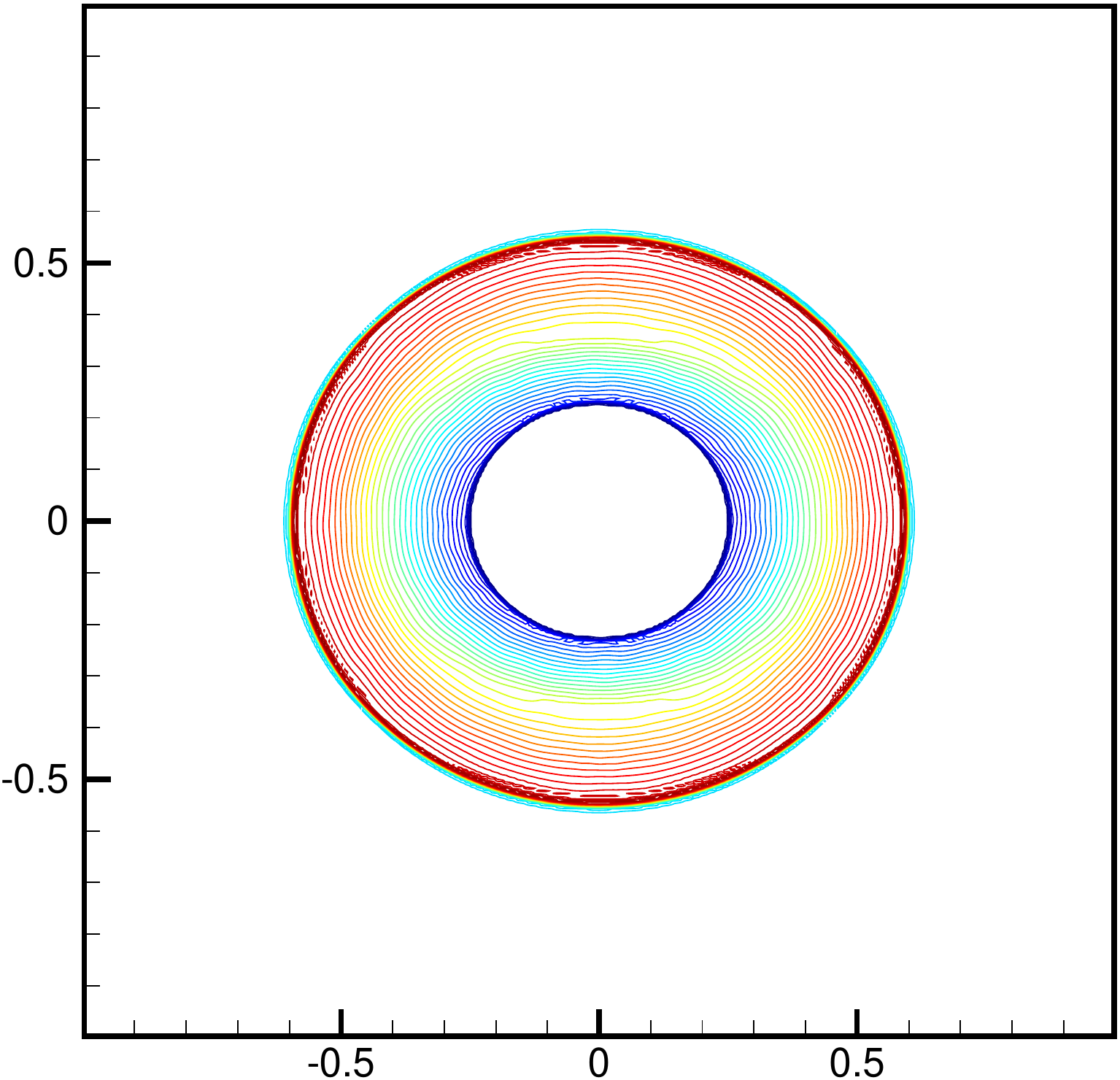}
	\caption{$h$}
\end{subfigure}
\begin{subfigure}[b]{0.5\textwidth}
	\centering
	\includegraphics[width=0.8\textwidth]{./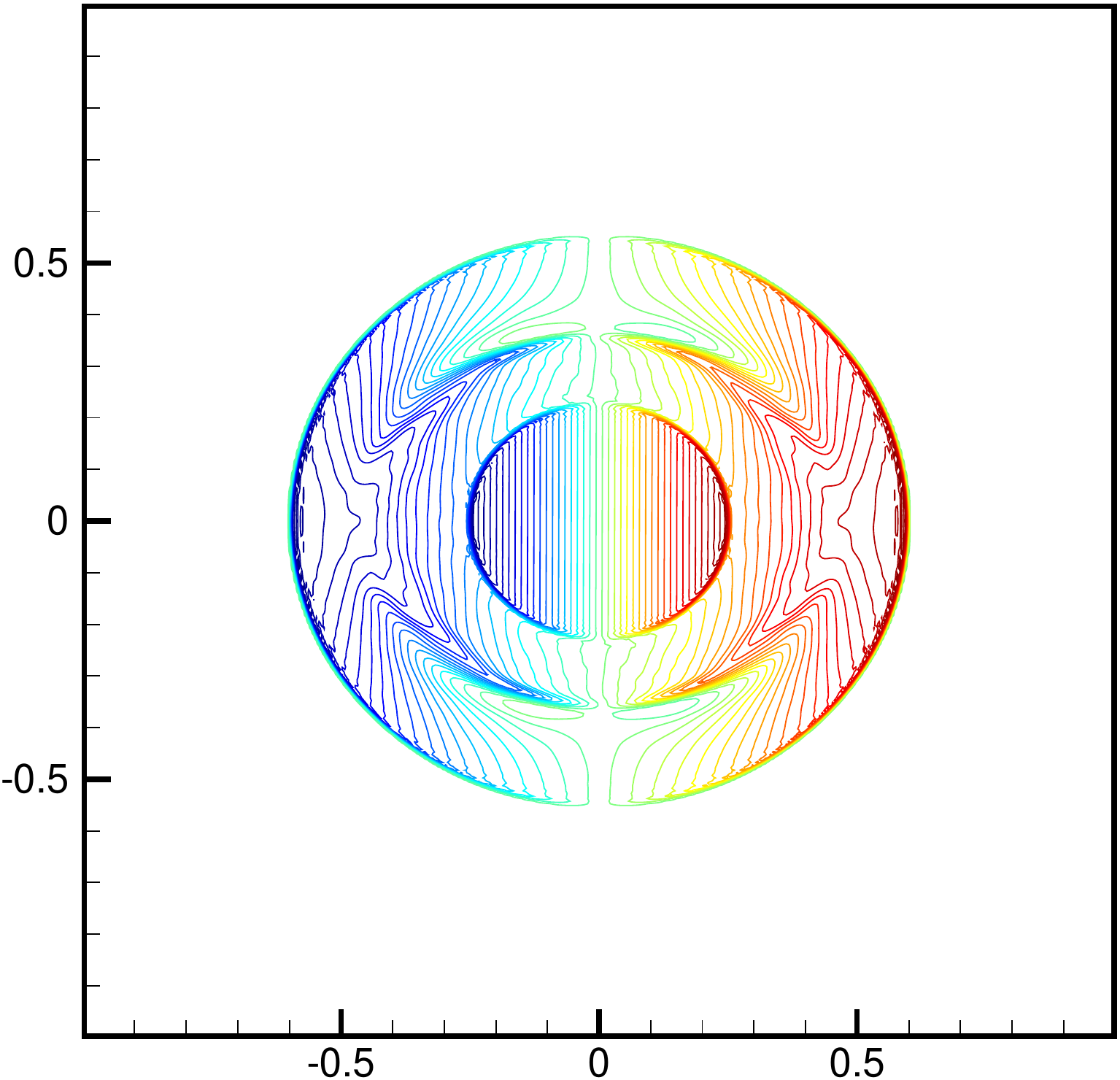}
	\caption{$\vx$}
\end{subfigure}
\begin{subfigure}[b]{0.5\textwidth}
	\centering
	\includegraphics[width=0.8\textwidth]{./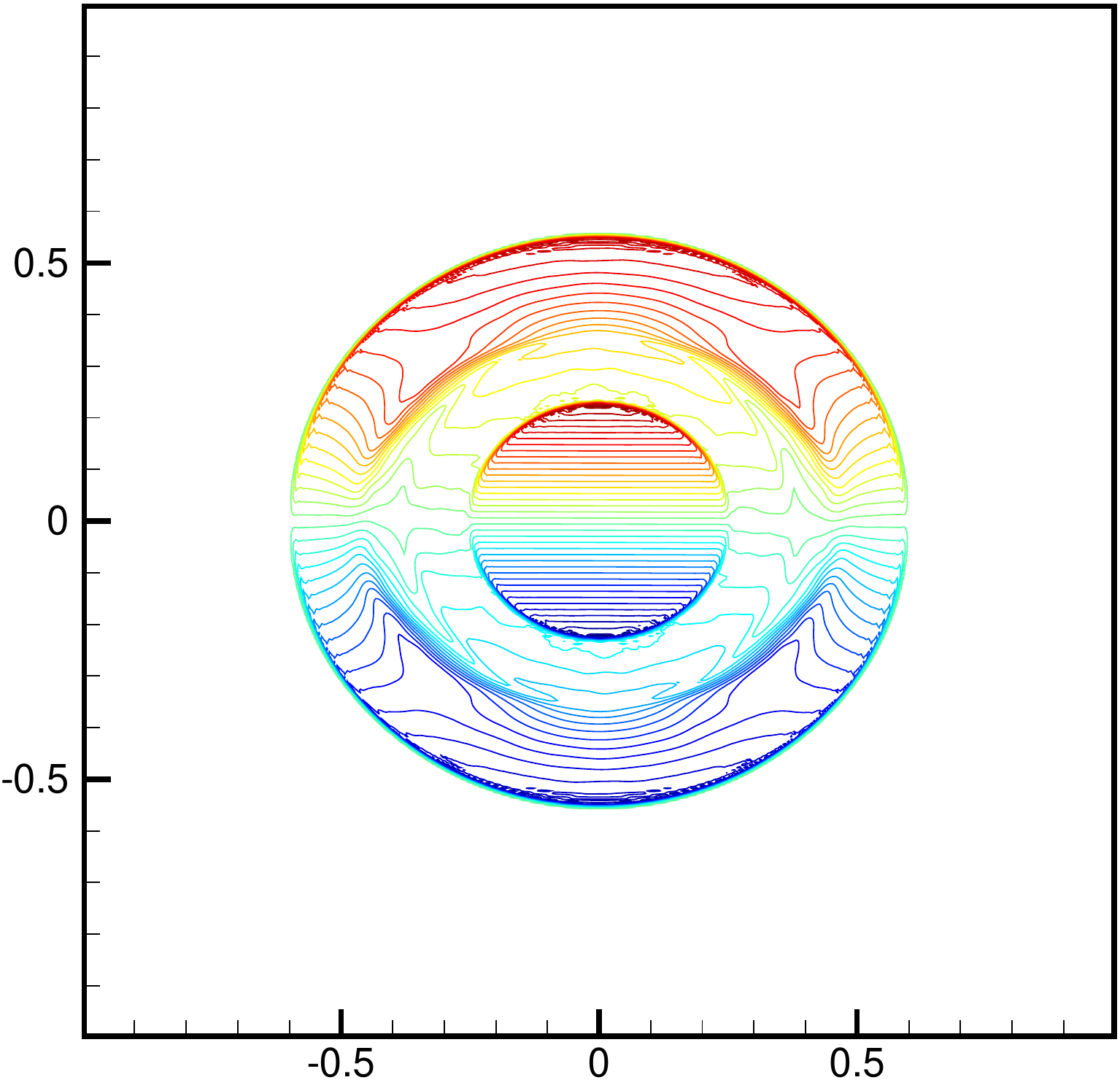}
	\caption{$\vy$}
\end{subfigure}
\begin{subfigure}[b]{0.5\textwidth}
	\centering
	\includegraphics[width=0.8\textwidth]{./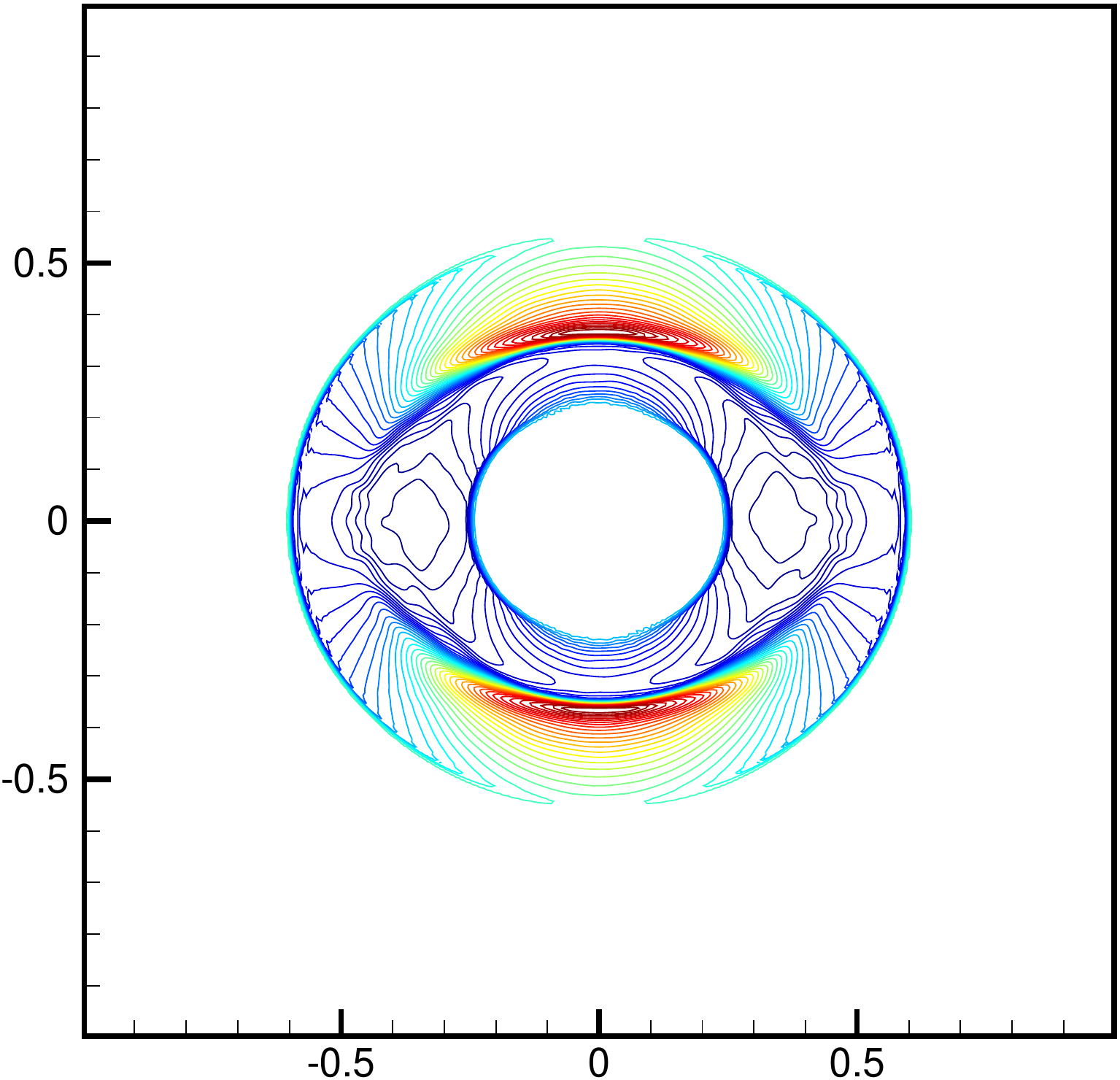}
	\caption{$\Bx$}
\end{subfigure}
\begin{subfigure}[b]{0.5\textwidth}
	\centering
	\includegraphics[width=0.8\textwidth]{./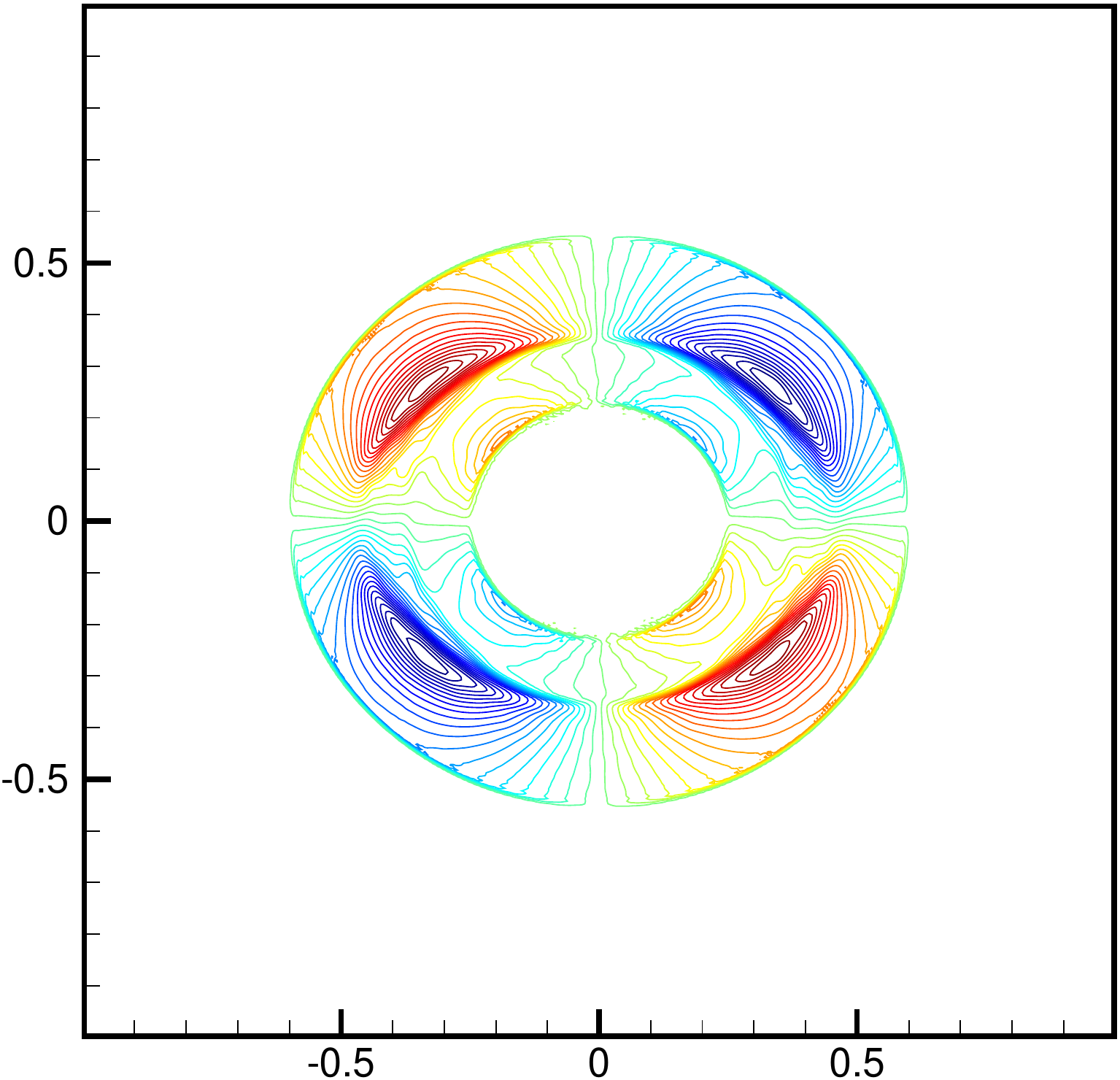}
	\caption{$\By$}
\end{subfigure}
  \caption{Example \ref{ex:Rotor}: The contours of of the $h,\vx,\vy,\Bx,\By$
   at $t=0.2$ obtained by using the ES scheme with $N_x=N_y=400$.
  $40$ equally spaced contour lines are used.}
  \label{fig:Rotor}
\end{figure}

\begin{figure}[htb!]
	\begin{subfigure}[b]{0.5\textwidth}
		\centering
		\includegraphics[width=1.0\textwidth]{./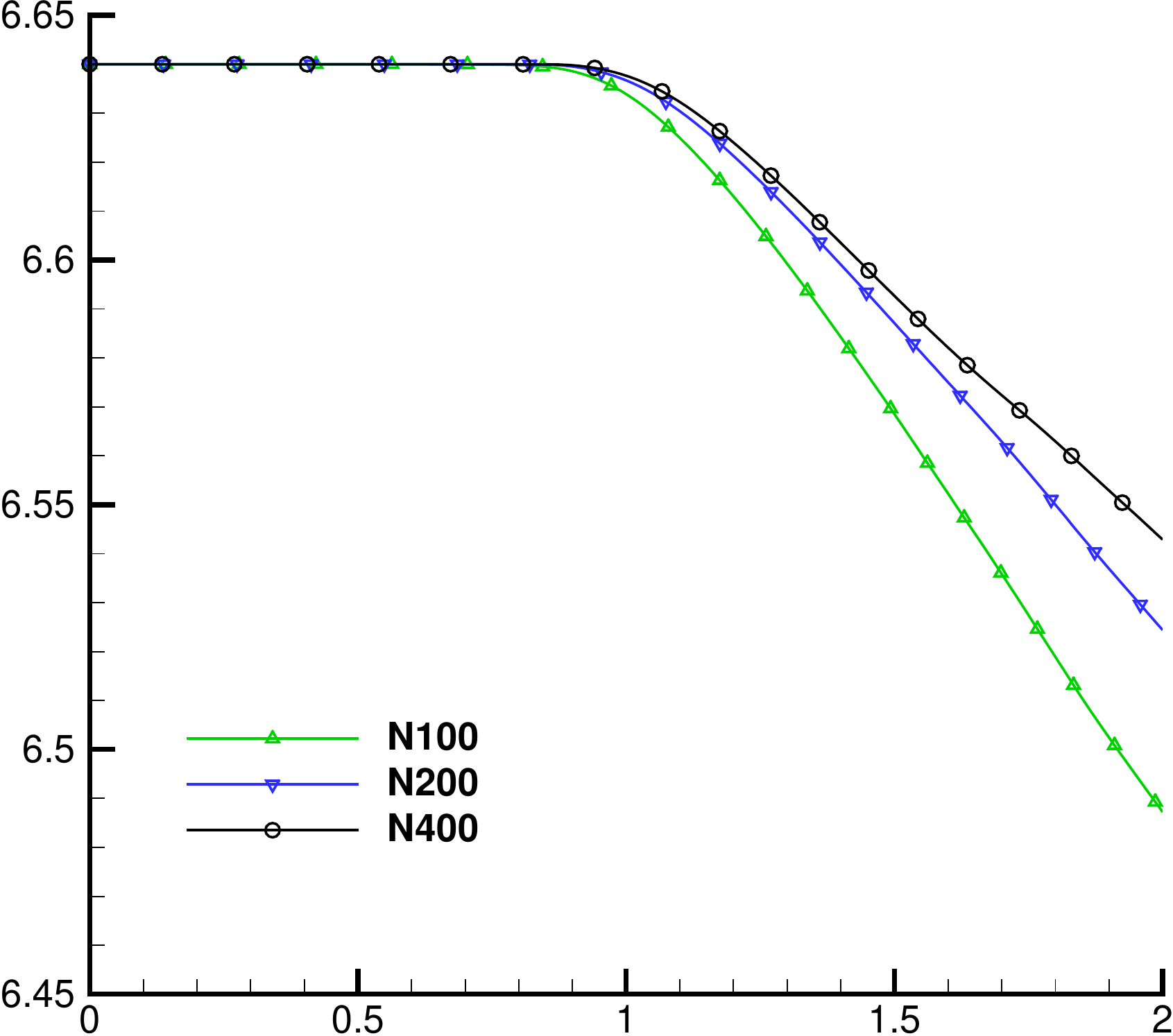}
		\caption{Example \ref{ex:OrszagTang}}
	\end{subfigure}
	\begin{subfigure}[b]{0.5\textwidth}
		\centering
		\includegraphics[width=1.0\textwidth]{./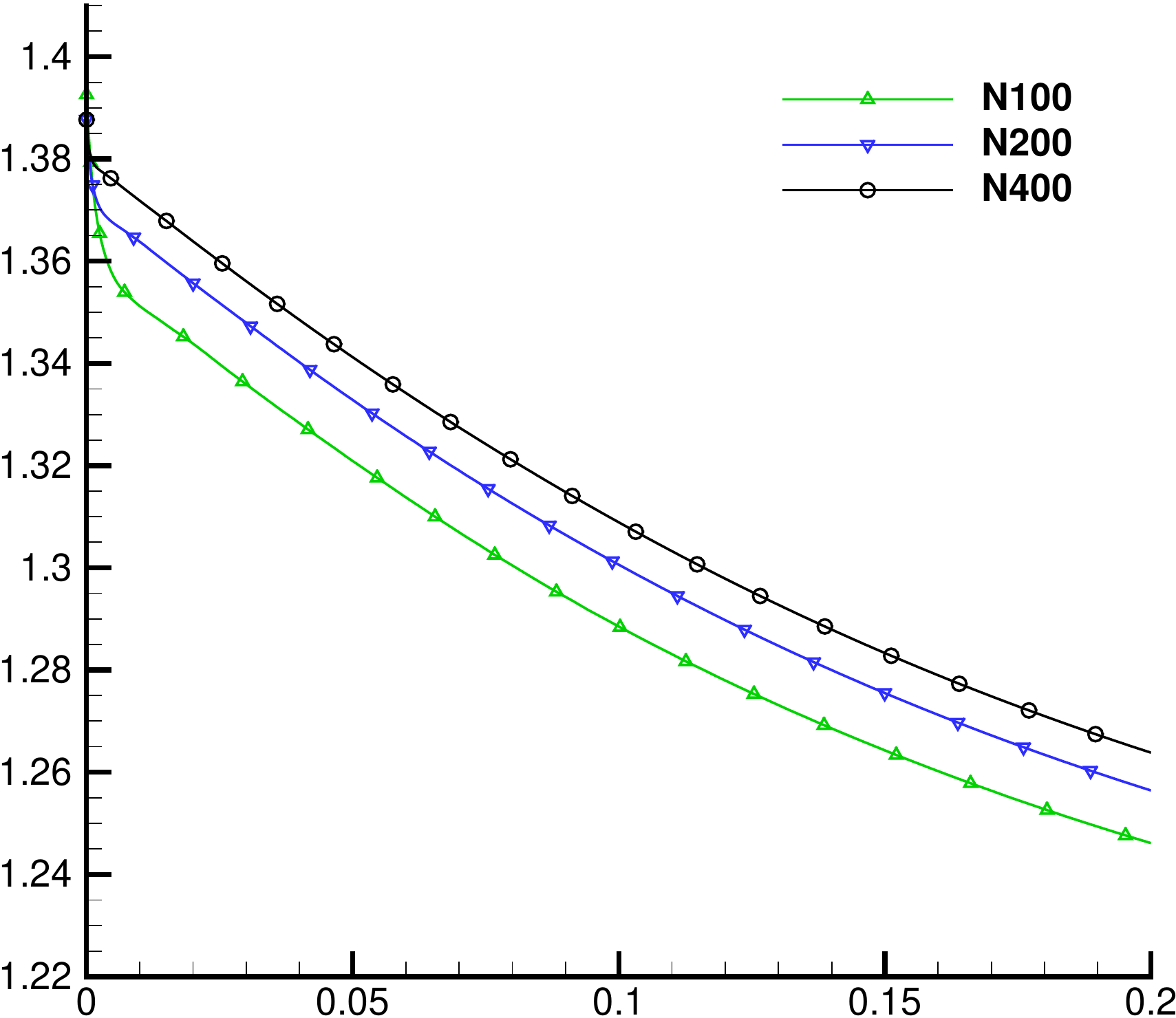}
		\caption{Example \ref{ex:Rotor}}
	\end{subfigure}
	\caption{The time evolution of the discrete total entropy for
	Example \ref{ex:OrszagTang} and \ref{ex:Rotor} with several different resolutions in space. }
	\label{fig:TotalEntropy}
\end{figure}

%

\section{Conclusion}\label{section:Conclusion}
The paper proposed the high-order accurate entropy stable (ES) finite difference schemes for
the one- and two-dimensional shallow water magnetohydrodynamic (SWMHD) equations with non-flat bottom topography.
 The Janhunen source term was added to the conservative SWMHD equations.
For the modified  SWMHD equations, the second-order accurate well-balanced semi-discrete entropy conservative (EC)
finite difference scheme was first constructed, in the sense that it satisfied the semi-discrete entropy identity for the given entropy pair (the total energy served as the mathematical entropy) and preserved the steady state of lake at rest (with zero magnetic field).
The key was to match both discretizations for the fluxes and the source term related to the non-flat river bed bottom and the Janhunen source term, and to find the affordable EC
fluxes of the second-order EC schemes.
Next, the high-order accurate well-balanced EC schemes were obtained
by using the second-order accurate EC schemes as building block.
In view of that the EC schemes might become oscillatory near the discontinuities, the appropriate dissipation terms were added to the EC fluxes to develop the semi-discrete well-balanced  ES
schemes satisfying the semi-discrete entropy inequality.
The WENO reconstruction of the scaled entropy variables and the high-order explicit Runge-Kutta time discretization
were implemented to obtain the fully-discrete high-order  schemes.
The ES and positivity-preserving properties of the Lax-Friedrichs scheme were also proved  without the assumption that
the 1D exact Riemann solution of $x$-split system was ES,
and then the high-order positivity-preserving ES schemes were  developed by using the positivity-preserving flux limiter.
Extensive numerical tests showed that our schemes could achieve
the designed
accuracy, were well-balanced or positivity-preserving, and could well capture the discontinuities.

\appendix
\section{SWMHD in polar coordinates}\label{app}
In the polar coordinate system $(r,\theta)$,
the modified   SWMHD equations \eqref{eq:symm} without the bottom topography  become
\begin{equation}
\begin{aligned}
&\pd{h}{t}+\nabla\cdot(h\bv)=0,\\
&\pd{(hv_r)}{t}+\nabla\cdot(hv_r\bv-hB_r\bm{B})+\pd{(gh^2/2)}{r}=\dfrac{h(v_\theta^2-B_\theta^2)}{r},\\
&\pd{(hv_\theta)}{t}+\nabla^r\cdot(hv_\theta\bv-hB_\theta\bm{B})+\pd{(gh^2/2)}{\theta}=0,\\
&\pd{(hB_r)}{t}+\dfrac1r\pd{(hv_\theta B_r-hv_rB_\theta)}{\theta}=-B_r\divhB,\\
&\pd{(hB_\theta)}{t}-\pd{(hv_\theta B_r-hv_rB_\theta)}{r}=-B_\theta\divhB,
\end{aligned}
\end{equation}
where
\begin{equation}
\begin{aligned}
&\nabla\cdot\bF=\dfrac{1}{r}\pd{(rF_r)}{r}+\dfrac{1}{r}\pd{F_\theta}{\theta},\quad
\nabla^r\cdot\bF=\dfrac{1}{r^2}\pd{(r^2F_r)}{r}+\dfrac{1}{r}\pd{F_\theta}{\theta},
\end{aligned}
\end{equation}
and $F_r,F_\theta$ are the radius and azimuth components of the vector $\bF$, respectively.

\section*{Acknowledgments}
The authors would like to thank the discussion of Dr. Kailiang Wu at The Ohio State University and Mr. Caiyou Yuan at Peking University.
The work was partially supported by the Special Project on High-performance Computing under the
National Key R\&D Program (No. 2016YFB0200603), Science Challenge Project (No. TZ2016002),
the National Natural Science Foundation of China (Nos. 91630310 \& 11421101),
and High-performance Computing Platform of Peking University.

\bibliographystyle{siam}
\bibliography{SW,MHD,SWMHD,ES,WENO,PP}

\end{document}